\newcommand{\compactlist}[1]{\setlength{\itemsep}{0pt} \setlength{\parskip}{0pt} \setlength{\leftskip}{-0.#1em}}
\newcommand{\corm}[1]{C^{#1}_{\rm co}(U,N)}
\newcommand{\lie}{\mathcal{L}}
\newcommand{\BB}{B}
\newcommand{\bb}{{b}}
\newcommand{\DD}{{D}}
\newcommand{\dd}{{d}}
\newcommand{\sss}{{s}}
\newcommand{\ttt}{{t}}
\numberwithin{equation}{section}
\theoremstyle{plain}
\newtheorem{theorem}{Theorem}[section]
\newtheorem{proposition}[theorem]{Proposition}
\newtheorem{prop}[theorem]{Proposition}
\newtheorem{lem}[theorem]{Lemma}
\newtheorem{corollary}[theorem]{Corollary}
\theoremstyle{definition}
\newtheorem{definition}[theorem]{Definition}
\newtheorem{example}[theorem]{Example}
\newtheorem{rem}[theorem]{Remark}
\newcommand{\ahha}{{\scriptscriptstyle{A}}}
\newcommand{\dehhe}{{\scriptscriptstyle{D}}}
\newcommand{\ggii}{{\scriptscriptstyle{G}}}
\newcommand{\akka}{{\scriptscriptstyle{H}}}
\newcommand{\emme}{{\scriptscriptstyle{M}}}
\newcommand{\enne}{{\scriptscriptstyle{N}}}
\newcommand{\esse}{{\scriptscriptstyle{S}}}
\newcommand{\uhhu}{{\scriptscriptstyle{U}}}
\newcommand{\Q}{{\mathbb{Q}}}
\newcommand{\Z}{{\mathbb{Z}}}
\newcommand{\ga}{\alpha} 
\newcommand{\gb}{\beta}  
\newcommand{\gG}{\Gamma}
\newcommand{\gd}{\delta} 
\newcommand{\gD}{\Delta} 
\newcommand{\gve}{\varepsilon} 
\newcommand{\gvf}{\varphi}
\newcommand{\go}{\omega} 
\newcommand{\gO}{\Omega}
\newcommand{\gs}{\sigma} 
\newcommand{\gS}{\Sigma}
\newcommand{\gt}{\theta}
\newcommand{\cC}{{\mathcal C}}
\newcommand{\cD}{{\mathcal D}}
\newcommand{\cH}{{\mathcal H}}
\newcommand{\cL}{{\mathcal L}}
\newcommand{\cM}{{\mathcal M}}
\newcommand{\cN}{{\mathcal N}}
\newcommand{\cP}{{\mathcal P}}
\newcommand{\End}{\operatorname{End}}
\newcommand{\Hom}{\operatorname{Hom}}
\newcommand{\ad}{\operatorname{ad}}
\newcommand{\Tor}{{\rm Tor}}
\newcommand{\Ext}{{\rm Ext}}
\newcommand{\Cotor}{\operatorname{Cotor}}
\newcommand{\id}{{\rm id}}
\newcommand{\pr}{{\rm pr} \,}
\newcommand{\Sh}{{\rm Sh}}
\newcommand{\sh}{{\rm sh}}
\newcommand{\due}[3]{{}_{{#2 }} {#1}_{{ #3}}\,}    
\newcommand{\qttr}[5]{{}^{{#2 \!}}_{{#4 \!}} {#1}^{#3}_{{\! #5}}}    
\newcommand{\ubs}[2]{\underset{\scriptscriptstyle{#1}}{\underbrace{#2}}}
\newcommand{\cinfc}[1]{\cC_{\rm c}^\infty(#1)}                      
\newcommand{\cinf}[1]{\cC^\infty(#1)}                               
\newcommand{\pl}{\partial}
\newcommand{\rmref}[1]{{\rm (}\ref{#1}{\rm )}}
\newcommand{{\Hl}}{{H^{\ell}}} 
\newcommand{{\mHop}}{{m_{H^{\rm op}}}} 
\newcommand{{\Hop}}{{H^{\rm op}}} 
\newcommand{{\mUop}}{{m_{U^{\rm op}}}} 
\newcommand{{\mUopp}}{{m_{\scriptscriptstyle{U^{\rm op}}}}} 
\newcommand{{\Uop}}{{U^{\rm op}}}
\newcommand{{\mVop}}{{m_{V^{\rm op}}}} 
\newcommand{{\Vop}}{{V^{\rm op}}}  
\newcommand{{\Ae}}{{A^{\rm e}}}
\newcommand{{\Ue}}{{U^{\rm e}}}
\newcommand{{\He}}{{H^{\rm e}}}
\newcommand{{\Aop}}{{A^{\rm op}}}
\newcommand{{\Aope}}{({A^{\rm op}})^{\rm e}}
\newcommand{{\Aopl}}{{A^{\rm op}_\pl}}
\newcommand{{\Bop}}{{B^{\rm op}}}
\newcommand{{\Bope}}{({B^{\rm op}})^{\rm e}}
\newcommand{{\Bpl}}{{B_\pl}}
\newcommand{{\op}}{{{\rm op}}}
\newcommand{{\coop}}{{{\rm coop}}}
\newcommand{{\sop}}{{*^{\rm op}}}
\newcommand{\amoda}{A^{\rm e}\mbox{-}\mathbf{Mod}}                  %
\newcommand{\moda}{A^\mathrm{op}\mbox{-}\mathbf{Mod}}         %
\newcommand{\umod}{U\mbox{-}\mathbf{Mod}}                     
\newcommand{\modu}{U^\mathrm{op}\mbox{-}\mathbf{Mod}}         %
\newcommand{\yd}{{}^\uhhu_\uhhu\mathbf{YD}}                     
\newcommand{\ayd}{{}^\uhhu\mathbf{aYD}_\uhhu}
\newcommand{\ucomod}{U\mbox{-}\mathbf{Comod}}
\newcommand{\lact}{\smalltriangleright}                  
\newcommand{\ract}{\smalltriangleleft}
\newcommand{\blact}{\blacktriangleright}  
\newcommand{\bract}{\blacktriangleleft}
\newcommand{{\gog}}{{G \rightrightarrows G_0}}
\newcommand{{\rra}}{\rightrightarrows}
\newcommand{{\lra}}{\ \longrightarrow \ }
\newcommand{{\lla}}{\ \longleftarrow \ }
\newcommand{{\lma}}{\ \longmapsto \ }
\newcommand{{\bull}}{{\scriptscriptstyle{\bullet}}}
\newcommand{{\qqquad}}{{\quad\quad\quad}}
\newcommand{\Aopp}{{\scriptscriptstyle{\Aop}}}
\newcommand{\Aee}{{\scriptscriptstyle{\Ae}}}
\newcommand{\Hee}{{\scriptscriptstyle{\He}}}
\begin{document}

\title{Batalin-Vilkovisky algebra structures on 
${\rm (Co)}\Tor$ 
and Poisson bialgebroids}

\author{Niels Kowalzig}

\begin{abstract}
In this article, we extend our preceding studies on 
higher algebraic structures of (co)homology theories defined by
a left bialgebroid $(U,A)$. 
For a braided commutative Yetter-Drinfel'd algebra $N$, 
explicit expressions for the canonical Gerstenhaber algebra structure on
$\Ext_U(A,N)$ are given.
Similarly, if $(U,A)$ is a left Hopf algebroid where $A$ is an anti Yetter-Drinfel'd module over $U$, it is shown that 
the cochain complex computing
$\Cotor_U(A,N)$ defines a cyclic operad with multiplication and hence the groups $\Cotor_U(A,N)$
form a Batalin-Vilkovisky algebra. 
In the second part of this article, Poisson structures 
and the Poisson bicomplex for bialgebroids are introduced, 
which simultaneously generalise, for example, classical Poisson as well as cyclic homology. 
In case the bialgebroid $U$ is commutative, a Poisson structure on $U$ 
leads to a Batalin-Vilkovisky algebra structure on $\Tor_U(A,A)$. As an illustration, we show 
how this generalises the classical Koszul bracket on differential forms, and conclude by indicating how classical Lie-Rinehart bialgebras (or, geometrically, Lie bialgebroids) arise from left bialgebroids.
\end{abstract}

\address{Istituto Nazionale di Alta Matematica, P.le Aldo Moro 5, 00185 Roma, Italia}

\email{kowalzig@mat.uniroma2.it}

\keywords{Gerstenhaber algebra, Batalin-Vilkovisky algebra, operad, braided commutative Yetter-Drinfel'd algebra, noncommutative differential calculus, cyclic homology, Poisson homology, Hopf algebroid,  
Lie-Rinehart algebra, Koszul bracket}

\subjclass[2010]{16T05, 16E40; 16T15, 19D55, 58B34.
}

\maketitle

\tableofcontents

\section{Introduction}

\subsection{Gerstenhaber and Batalin-Vilkovisky algebras}
It is by now common knowledge that the cohomology or homology groups of a given mathematical object, although at first glance 
only a graded module over some base ring, often carry higher algebraic structures, such as products, brackets, and differentials. 
One of the earliest account for such structures on the cohomology groups of associative algebras is presumably 
Gerstenhaber's pioneer article \cite{Ger:TCSOAAR}, but in the meantime more general (co)homology theories such as for Lie-Rinehart algebras (Lie algebroids) \cite{Hue:LRAGAABVA,Hue:DBVAAFTLRA,Kos:EGAALBA,Xu:GAABVAIPG} or for Hopf algebras \cite{FarSol:GSOTCOHA, Kad:OTCCOAB, Men:BVAACCOHA, Men:CMCMIALAM, Tai:IHBCOIDHAAGCOTYP} as well as Hopf algebroids \cite{KowKra:BVSOEAT} have been investigated in this direction. What is more, some of these structures already appear on the (co)chain level \cite{TamTsy:NCDCHBVAAFC}, and
in an even more 
abstract spirit, analogous structures (up to homotopy) have been found for the cochain spaces or the cohomology of (certain) operads \cite{BraLaz:HBVAIPG, GalTonVal:HBVA, GerSch:ABQGAAD, GetJon:OHAAIIFDLS, GerVor:HGAAMSO,  MarShnSta:OIATAP, McCSmi:ASODHCC, Men:BVAACCOHA}. 

\begin{definition}
\label{golfoaranci1}
Let $k$ be a commutative ring.
\begin{enumerate}
\compactlist{99}
\item
A {\em Gerstenhaber algebra} 
over $k$ is a graded commutative $k$-algebra
$(V,\smallsmile)$  
$$
        V=\bigoplus_{p \in \mathbb{N}} V^p,\quad
        \ga \smallsmile \gb=(-1)^{pq}\gb \smallsmile \ga
        \in V^{p+q},\quad 
        \ga \in V^p,\gb \in V^q,
$$ 
with a graded Lie bracket 
$
        \{\cdot,\cdot\} : V^{p+1} \otimes_k V^{q+1} \rightarrow V^{p+q+1}
$ 
on the \emph{desuspension} 
$$
        V[1]:=\bigoplus_{p \in \mathbb{N}} V^{p+1}
$$
of $V$, 
for which all operators $\{\gamma,\cdot\}$ satisfy the graded Leibniz rule
$$
        \{\gamma,\ga \smallsmile \gb\}=
        \{\gamma,\ga\} \smallsmile \gb + (-1)^{pq} \ga \smallsmile
        \{\gamma,\gb\},\quad
        \gamma \in V^{p+1},\ga \in V^q.
$$ 
\item
A {\em Batalin-Vilkovisky} algebra is a Gerstenhaber algebra $V$ with a $k$-linear differential
$$
\BB: V^n \to V^{n-1}, \quad \BB\BB = 0
$$
 of degree $-1$
such that for all $\ga \in V^p$, $\gb \in V$
$$
\{\ga, \gb\} = (-1)^{p}\big(\BB(\ga \smallsmile \gb) - \BB(\ga) \smallsmile \gb - (-1)^p \ga \smallsmile \BB(\gb) \big).
$$
A Batalin-Vilkovisky algebra is also called an {\em exact} Gerstenhaber algebra and the differential $\BB$ is said to {\em generate} the Gerstenhaber bracket.  
\end{enumerate}
\end{definition}

Since we shall continuously deal with the desuspension mentioned above, it is convenient to introduce the notation
$$
|n| := n-1, \quad n \in \Z.
$$

On the other hand, as we will see in \S\ref{salice}, in some cases 
Gerstenhaber algebras come with a differential that, in contrast to the generating operator of a Batalin-Vilkovisky algebra, increases the degree:

\begin{definition}
\label{caffetommaseo}
A {\em differential Gerstenhaber algebra} is a Gerstenhaber algebra $V$ with a $k$-linear differential 
$$
\gd: V^n \to V^{n+1}, \quad \gd \gd = 0
$$
of degree $+1$ such that $\gd$ is a graded derivation of the cup product, {\em i.e.},
such that 
$$
\gd(\ga \smallsmile \gb) = \gd \ga \smallsmile \gb + (-1)^p \ga \smallsmile \gd \gb, \qquad  \ga \in V^p, \gb \in V,
$$
holds. It is called {\em strong differential} if $\gd$ is, additionally, a graded derivation of the Gerstenhaber bracket, that is, if
$$
\gd \{\ga, \gb\} = \{\gd \ga, \gb \} + (-1)^{|p|} \{ \ga, \gd \gb \}, \qquad  \ga \in V^p, \gb \in V,
$$
holds true.
\end{definition}

\subsection{Aims and objectives}
The principal aim of this paper is to investigate under what conditions the (co)homology groups
$$
\Ext_U(A,M), \ \Cotor_U(A,M), \ \mbox{and} \ \Tor^U(A,A)
$$
admit a Gerstenhaber resp.\ Batalin-Vilkovisky algebra structure, where $U$ is a left bialgebroid (a $\times_\ahha$-bialgebra) or a left Hopf algebroid (a $\times_\ahha$-Hopf algebra) over a possibly noncommutative $k$-algebra $A$. In \S\ref{prelim} we indicate the necessary details for this sort of generalisation of a $k$-bialgebra resp.\ Hopf algebra to noncommutative base rings.
Here, we only seize the occasion once again to point out 
that the rings governing most parts of classical homological algebra can all be described by such a structure.
As a consequence, our results apply to, for example, 
Hochschild and Lie-Rinehart (in particular Lie algebra, de Rham, Lie algebroid and Poisson) (co)homology, {\em i.e.}, give access to both algebra and geometry, but also to that of any Hopf algebra (which leads to, {\em e.g.}, group (co)homology) as well as to (\'etale) groupoid homology.

\subsection{Yetter-Drinfel'd algebras as coefficient modules for Gerstenhaber algebras} 
The aim of \S\ref{caprarola} is to give explicit expressions 
of the canonical Gerstenhaber algebra structures on (simplicial) cohomology and (coring) cohomology associated to a left bialgebroid $U$ and taking values in general coefficient modules: 
the left bialgebroid structure of $U$ leads not only to a monoidal structure on the categories $\umod$ and $\ucomod$ of left $U$-modules resp.\ left $U$-comodules, but also to one on $\yd$, the category of Yetter-Drinfel'd modules. 
Considering monoids in this latter category and with the help of a well-known result 
about Gerstenhaber structures in relation to the cohomology of operads with multiplication \cite{GerSch:ABQGAAD, McCSmi:ASODHCC, Men:BVAACCOHA}, we can prove:

\begin{theorem}
\label{main1}
If $N$ is a braided commutative Yetter-Drinfel'd algebra over a left bialgebroid $U$,
then 
$$
C^\bull(U,N) := \Hom_\Aopp\big((U^{\otimes_\Aopp \bull})_\ract, N  \big)
$$ 
defines an operad with multiplication.
Hence, 
$
H^\bull(U,N) := H(C^\bull(U,N), \gd)
$ 
carries the structure of a Gerstenhaber algebra.
\end{theorem}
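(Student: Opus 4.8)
The strategy is to exhibit $C^\bull(U,N)$ as a (non-symmetric) operad with multiplication in the sense of \cite{GerSch:ABQGAAD, McCSmi:ASODHCC, Men:BVAACCOHA}; the cited structure theorem for the cohomology of such operads then produces the Gerstenhaber algebra on $H^\bull(U,N)$ with no further work. Hence everything reduces to constructing the operadic partial compositions on $C^\bull(U,N)$, a multiplication, and the operad unit, to verifying the operad axioms, and to identifying the associated cosimplicial coboundary with the differential $\gd$ used in \S\ref{caprarola} to define $H^\bull(U,N)$.

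The partial composition I would write down is, for $\varphi \in C^p(U,N)$, $\psi \in C^q(U,N)$ and $1 \le i \le p$,
$$
(\varphi \circ_i \psi)(u^1 \otimes \dots \otimes u^{p+q-1}) :=
\varphi\bigl(u^1, \dots, u^{i-1},\ u^i_{(1)} \cdots u^{i+q-1}_{(1)}\,
\psi(u^i_{(2)} \otimes \dots \otimes u^{i+q-1}_{(2)})_{(-1)},\ u^{i+q}, \dots, u^{p+q-1}\bigr)
\cdot \psi(u^i_{(2)} \otimes \dots \otimes u^{i+q-1}_{(2)})_{(0)},
$$
where $\gD(u) = u_{(1)} \otimes u_{(2)}$ is the comultiplication of $U$, where $n \mapsto n_{(-1)} \otimes n_{(0)}$ denotes the left $U$-coaction of the Yetter--Drinfel'd module $N$ --- used to convert the $N$-valued output of $\psi$ into the element of $U$ fed into the $i$-th slot of $\varphi$ --- and where $\cdot$ is the braided commutative multiplication of $N$, which absorbs the residual $N$-valued factor. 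The multiplication is $\mu \in C^2(U,N)$, $\mu(u \otimes v) := \varepsilon(u)\varepsilon(v) \lact 1_N$ (read off via the source and target maps of $U$), the operad unit in degree one is $\eta \in C^1(U,N)$, $\eta(u) := \varepsilon(u) \lact 1_N$, and $e := 1_N \in C^0(U,N)$ serves as the nullary unit for $\mu$. A first round of checks shows that $\varphi \circ_i \psi$ is balanced over the powers $(U^{\otimes_\Aopp \bull})_\ract$ and lands in $C^{p+q-1}(U,N)$, using only the $\Aopp$-linearity built into the cochains together with the module--comodule compatibilities of a Yetter--Drinfel'd algebra.

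The operad identities split, as usual, into the \emph{sequential} relations $(\varphi \circ_i \psi) \circ_{i+j-1} \chi = \varphi \circ_i (\psi \circ_j \chi)$ and the \emph{parallel} relations $(\varphi \circ_i \psi) \circ_{j+q-1} \chi = (\varphi \circ_j \chi) \circ_i \psi$ for $i < j$. The sequential case is pure bookkeeping, repeatedly applying coassociativity of the coaction and of the comultiplication. The parallel case is the crux and, I expect, the main obstacle: there the two residual $N$-valued factors $\psi_{(0)}$ and $\chi_{(0)}$ appear multiplied in opposite orders on the two sides while the $(-1)$-components they deposit inside $\varphi$ sit in distinct slots, so reconciling the two sides forces one to commute two elements of $N$ while simultaneously transporting a coaction through the $U$-action --- and this is precisely where braided commutativity of $N$, hand in hand with the Yetter--Drinfel'd compatibility between action and coaction, has to be invoked. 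Unitality ($\varphi \circ_i \eta = \varphi$ and $\eta \circ_1 \psi = \psi$) is where the left bialgebroid counit axioms $s(\varepsilon(u_{(1)}))\,u_{(2)} = u = u_{(1)}\,t(\varepsilon(u_{(2)}))$ enter, making the prepended factor $u^i_{(1)} \cdots u^{i+q-1}_{(1)}$ recombine correctly; associativity and unitality of $\mu$ then follow from the same counit axioms together with the (braided commutative, associative, unital) algebra structure of $N$.

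Finally I would compute the cosimplicial structure determined by $(C^\bull(U,N), \{\circ_i\}, \mu)$: its cofaces are $d^0\varphi = \mu \circ_2 \varphi$, $d^i\varphi = \varphi \circ_i \mu$ for $1 \le i \le p$, and $d^{p+1}\varphi = \mu \circ_1 \varphi$, and I would check that $\gd = \sum_i (-1)^i d^i$ agrees with the coboundary of \S\ref{caprarola} --- the inner terms $\varphi \circ_i \mu$ collapse, via the counit of $U$ and coaction-invariance of $1_N$, to the insertion of $\gD(u^i)$ in the $i$-th argument, while the outer terms give the cup product with $\mu$. The cup product $\varphi \smallsmile \psi = (\mu \circ_1 \varphi) \circ_{p+1} \psi$ reduces, again by the counit axioms, to multiplying the outputs of $\varphi$ and $\psi$ in $N$, and the brace $\varphi \bar\circ \psi = \sum_i (-1)^{|q|(i-1)} \varphi \circ_i \psi$ descends to the Gerstenhaber bracket $\{\varphi, \psi\} = \varphi \bar\circ \psi - (-1)^{|p|\,|q|}\, \psi \bar\circ \varphi$ on $H^\bull(U,N)$ by the quoted structure theorem for operads with multiplication. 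As a consistency check, for $U = \Ae$ and $N = A$ the whole construction specialises to the classical Gerstenhaber operad on the Hochschild cochains, braided commutativity of $A$ over $\Ae$ being automatic.
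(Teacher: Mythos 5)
Your overall strategy is exactly the paper's: realise $C^\bull(U,N)$ as a non-$\gS$ operad with multiplication and invoke the Gerstenhaber--Schack/McClure--Smith theorem (Theorem \rmref{customerscopy}), with the multiplication $\mu$, the unit $\mathbb{1}$ and $e=1_\enne$ essentially as in \rmref{distinguished, I said 2}--\rmref{habemuspapam1}, and you correctly locate where braided commutativity and the Yetter--Drinfel'd compatibility must enter (the ``parallel'' composition axiom). However, your formula for $\varphi\circ_i\psi$ is not the right one, and the defect is not a matter of convention: you omit the diagonal splitting of the arguments $u^1,\dots,u^{i-1}$ \emph{preceding} the insertion slot and, with it, the action of their second comultiplication legs on the $N$-valued output of $\psi$. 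The paper's composition \rmref{maxdudler} reads
\begin{equation*}
(\varphi \circ_i \psi)(u^1,\dots,u^{|p+q|})
= \varphi\big(u^1_{(1)},\dots,u^{i-1}_{(1)},\DD_\psi(u^i,\dots,u^{i+q-1})_{(-1)},u^{i+q},\dots\big)
\cdot_\enne\big(u^1_{(2)}\cdots u^{i-1}_{(2)}\,\DD_\psi(u^i,\dots,u^{i+q-1})_{(0)}\big),
\end{equation*}
with $\DD_\psi$ as in \rmref{huetor1}; the factor $u^1_{(2)}\cdots u^{i-1}_{(2)}$ acting on $\DD_\psi(\cdots)_{(0)}$ is indispensable. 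To see concretely that your version fails, compute the zeroth coface of the induced cosimplicial structure, $d^0\varphi=\mu\circ_2\varphi$, already for $U$ a Hopf algebra over $A=k$: with your formula one gets $\varepsilon(u^1)\,\varphi(u^2,\dots,u^{p+1})$, whereas the bar differential \rmref{spaetkauf} demands $u^1\varphi(u^2,\dots,u^{p+1})$, the genuine $U$-action on $N$. So even granting the operad axioms, the operadic differential $\{\mu,\cdot\}$ would not coincide with $\gd$, and the resulting Gerstenhaber algebra would not live on $H^\bull(U,N)=H(C^\bull(U,N),\gd)$ as the theorem asserts. Equivalently, your cup product $(\mu\circ_1\varphi)\circ_{p+1}\psi$ collapses to $\varphi(u^1,\dots,u^p)\cdot_\enne\psi(u^{p+1},\dots,u^{p+q})$ without the diagonal factor $u^1_{(2)}\cdots u^p_{(2)}$ acting on the second tensorand, and this cannot satisfy the Leibniz rule for $\gd$ because the $U$-action distributes over $\cdot_\enne$ diagonally, cf.\ \rmref{zilvesta}.

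Two further points. First, the element you insert into the $i$-th slot, $u^i_{(1)}\cdots u^{i+q-1}_{(1)}\,\psi(u^i_{(2)},\dots)_{(-1)}$, has the coaction leg and the comultiplication legs in the opposite order and position from the paper's $\DD_\psi(u^i,\dots,u^{i+q-1})_{(-1)}=\psi(u^i_{(1)},\dots)_{(-1)}\,u^i_{(2)}\cdots u^{i+q-1}_{(2)}$; over a noncommutative base the well-definedness on $\otimes_\Aopp$ hinges on the Takeuchi condition \rmref{tellmemore} together with \rmref{maotsetung} and \rmref{normalny}, and the paper's ordering is the one for which these apply. Second, $\mu(u,v)=\varepsilon(u)\varepsilon(v)\lact 1_\enne$ should be $\varepsilon(uv)\lact 1_\enne$: for a left bialgebroid the counit is not multiplicative, so these differ once $A\neq k$. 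With the corrected $\circ_i$ your outline of the remaining verifications (sequential/parallel axioms via coassociativity, \rmref{circolodegliartisti} and \rmref{huhomezone1b}, unitality via the counit axioms, identification of the cofaces) is precisely what the paper carries out in the proof of Theorem \rmref{clairefontaine1a}.
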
 
Here, $\gd: C^\bull(U,N) \to C^{\bull+1}(U,N)$ defines the canonical cochain complex that arises from the bar resolution of $A$. We refer to the main text for all details and in particular all notation used throughout this introductory section.
The theorem implies in particular that if 
$\due U \blact {}$ is projective as a left $A$-module, then $\Ext^\bull_U(A,N)$ is a Gerstenhaber algebra, and generalises not only relatively recent results \cite{Tai:IHBCOIDHAAGCOTYP,Men:CMCMIALAM} in bialgebra theory (where $A:=k$ is a commutative ring that is central in $U$) 
but also in bialgebroid theory \cite{KowKra:BVSOEAT} by introducing general coefficients.

Another cohomology theory attached to any bialgebroid is obtained by considering the cobar resolution of $A$, {\em i.e.}, by dealing with the coring cohomology. This leads to a cochain complex
$$
\gb: \corm \bull := (\due U \lact \ract)^{\otimes_\ahha  \bull} \otimes_\ahha N \to \corm {\bull+1},
$$ 
which again admits the same sort of higher algebraic structure. In \S\ref{acquedottofelice} we prove:

\begin{theorem}
\label{main2}
Let $N$ be a braided commutative Yetter-Drinfel'd algebra over a left bialgebroid $U$.
Then 
$
\corm \bull
$ 
defines an operad with multiplication.
Hence, the cohomology groups $H^\bull_{\rm co}(U,N) :=  H(\corm \bull , \gb)$ carry the structure of a Gerstenhaber algebra.
\end{theorem}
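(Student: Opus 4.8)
The plan is to mimic the proof strategy for Theorem~\ref{main1}, transporting the argument from the simplicial (bar) side to the coring (cobar) side, and invoking the same result of \cite{GerSch:ABQGAAD, McCSmi:ASODHCC, Men:BVAACCOHA} that an operad with multiplication has Gerstenhaber cohomology. Thus the entire content of the proof is to exhibit the explicit operad structure on the collection $\{\corm n\}_{n \ge 0}$ with $\corm n = (\due U \lact \ract)^{\otimes_\ahha n} \otimes_\ahha N$, together with a multiplication, i.e.\ a morphism $\mu \in \corm 2$ and a unit $e \in \corm 0$ satisfying the associativity and unitality constraints of \cite{McCSmi:ASODHCC}. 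First I would record the requisite module and comodule structures: each tensor factor $\due U \lact \ract$ carries the left $A$-action $\lact$ and right $A$-action $\ract$ coming from the bialgebroid, so that the $\otimes_\ahha$ makes sense, and the coring coproduct on $U$ (over $A$) supplies the comultiplications needed to define the partial compositions. The unit $e \in \corm 0 = N$ should be the image of $1_A$ under the $A$-bimodule structure map, and the multiplication $\mu \in \corm 2$ should be built from the braided commutative multiplication on $N$ together with the two actions of $U$ on $N$ via source and target.

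Next I would define the partial composition maps $\circ_i \colon \corm m \otimes \corm n \to \corm{m+n-1}$ explicitly. The recipe is the standard one for Hopf-algebroid coring cohomology: to compose $\psi = u^1 \otimes \cdots \otimes u^m \otimes x$ at slot $i$ with $\phi = v^1 \otimes \cdots \otimes v^n \otimes y$, one applies the $(n)$-fold iterated coproduct to the factor $u^i$, distributing the resulting comonomials among the $n$ factors $v^j$ via the $U$-comodule (and module) structure, and uses the multiplication of $N$ to combine $x$ and $y$. The verification that these maps are well defined over $A$ — that is, compatible with all the tensor relations $\due{}{a\lact}{} $ and $\due{}{}{\ract a}$ — is exactly where the Yetter-Drinfel'd compatibility and the braided commutativity of $N$ enter; this is the step I expect to be the main obstacle, as in Theorem~\ref{main2}'s bar-side analogue, because one must track several interleaved $A$-actions simultaneously.

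Having the $\circ_i$, I would check the operad axioms (the two associativity relations for nested and for parallel compositions) and the equivariance under the symmetric-group actions is \emph{not} needed here since we only want a non-symmetric operad with multiplication. Then I would verify that $\mu$ is associative as an element of the operad, $\mu \circ_1 \mu = \mu \circ_2 \mu$, which reduces to coassociativity of the coring coproduct together with associativity of $N$'s product, and that $e$ is a two-sided unit, $\mu \circ_1 e = \mu \circ_2 e = \id$, which follows from the counit axiom of the $A$-coring $U$ and the unitality of $N$. Once $(\corm \bull, \{\circ_i\}, \mu, e)$ is an operad with multiplication, the cited theorem yields the cup product $\alpha \smallsmile \beta := \mu \circ_1 \alpha \circ_2 \beta$ (with appropriate signs), the Gerstenhaber bracket from the pre-Lie structure $\alpha \circ \beta := \sum_i (-1)^{\ast} \alpha \circ_i \beta$, and the compatibility with the differential $\gb$ induced by $\mu$ (which one should identify with the cobar/coring differential already fixed in the excerpt, up to the usual reindexing $|n| = n-1$). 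Finally I would remark that passing to cohomology $H^\bull_{\rm co}(U,N) = H(\corm\bull,\gb)$ gives the asserted Gerstenhaber algebra, completing the proof in parallel with Theorem~\ref{main1}.
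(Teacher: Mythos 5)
Your overall strategy is the paper's: one writes down explicit insertion operations $\circ_i$, a multiplication $\mu$ and units $\mathbb{1}$, $e$ on $\corm \bull$, verifies the axioms of Definition \ref{moleskine} by computations that the paper declares analogous to those for Theorem \ref{clairefontaine1a}, and invokes Theorem \ref{customerscopy}. The gap is that your plan stops short of the one piece of data that carries the actual content, and the verbal recipe you give for $\circ_i$ would not produce the correct operation once the coefficients $N$ are nontrivial. In the paper's formula \rmref{maxdudler2} one takes the $(q+1)$-fold coproduct of $u^i$: the first $q$ legs multiply $v^1,\dots,v^q$, but the last leg acts on the coefficient $n'$ of the \emph{inner} cochain, and it is the left $U$-coaction of $N$ applied to $u^i_{(q+1)}n'$ whose $p-i$ legs are then multiplied onto the remaining factors $u^{i+1},\dots,u^p$ of the \emph{outer} cochain, before $(u^i_{(q+1)}n')_{(0)}$ is finally multiplied with $n$ in $N$. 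Your description only distributes coproduct legs of $u^i$ over the $v^j$ and then combines the two coefficients by the product of $N$; it says nothing about how $n'$ interacts with $u^{i+1},\dots,u^p$. Without that coaction term the composition identities in \rmref{danton} fail and $\{\mu,\cdot\}$ does not reproduce the coboundary \rmref{appolloni2}, whose last coface involves precisely $m'_{(-1)}\otimes_\ahha m'_{(0)}$. This redistribution via the coaction is exactly the generalisation of \cite[p.~65]{GerSch:ABQGAAD} to nontrivial $N$ that the theorem is about, so it cannot be left to a generic appeal to a standard recipe.

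Two smaller corrections. The operad multiplication is simply $\mu=(1_\uhhu,1_\uhhu,1_\enne)\in\corm 2$ as in \rmref{distinguished, I said 3}, not an element built from the product of $N$ and the source/target actions; the product of $N$ enters only through the $\circ_i$. And the Yetter--Drinfel'd condition \rmref{huhomezone1a} together with braided commutativity \rmref{circolodegliartisti} are what make the two associativity identities in \rmref{danton} hold (as the displayed computations for Theorem \ref{clairefontaine1a} show), whereas well-definedness over $A$ follows already from \rmref{maotsetung}, \rmref{tellmemore}, \rmref{auchnochnicht} and \rmref{normalny}; your proposal locates the use of these hypotheses in the wrong step.
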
 
In particular, if
$U_\ract$ is flat as a right $A$-module, then $\Cotor^\bull_U(A,N)$ is a Gerstenhaber algebra. 
Also this theorem is an extension of the bialgebra case 
known before (implicitly in \cite{GerSch:ABQGAAD} and rediscovered more recently in \cite{Kad:OTCCOAB}, see also \cite{Men:BVAACCOHA}) to bialgebroids ({\em i.e.}, to noncommutative base rings) and nontrivial coefficients.

For both cochain complexes mentioned above, we will give explicit expressions in \S\ref{responsabilitacivile} for the graded commutative product $\smallsmile$ and the bracket $\{\cdot,\cdot\}$ that belong to any Gerstenhaber algebra by defining (in the spirit of \cite{Ger:TCSOAAR})  {\em Gerstenhaber products} $\circ_i$ on $C^\bull(U,N)$ resp.\ $\corm \bull$.

\subsection{The Batalin-Vilkovisky algebra $\Cotor$}

If $U$ is not only a left bialgebroid but rather a left Hopf algebroid (a $\times_\ahha$-Hopf algebra) and if on top of that 
the base algebra $A$ carries a right $U$-action (which is both fulfilled if $U$ is, for example, a {\em full} Hopf algebroid), 
this equips the cochain spaces $C^\bull_{{\rm co}}(U,N)$ with the structure of a {\em cyclic operad with multiplication}. 
Since by Menichi's theorem \cite{Men:BVAACCOHA} any such cyclic operad with multiplication defines the structure of a Batalin-Vilkovisky algebra on the associated cohomology, we can prove:

\begin{theorem}
Let $N$ be a braided commutative Yetter-Drinfel'd algebra over a left Hopf algebroid $U$ and
assume that $A$ is an anti Yetter-Drinfel'd module. 
Then one can define a right $U$-action on $N$ such that $N$ together with its left $U$-comodule structure becomes an {\em anti} Yetter-Drinfel'd module as well, 
and if $N$ is moreover stable with respect to this action, 
then $\corm \bull$ defines a cyclic operad with multiplication. 
Hence, the cohomology groups $H^\bull_{\rm co}(U,N)$ carry the structure of a Batalin-Vilkovisky algebra.
\end{theorem}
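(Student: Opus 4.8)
The strategy is to promote the operad-with-multiplication structure on $\corm\bull$ provided by Theorem~\ref{main2} to that of a \emph{cyclic} operad with multiplication, and then to invoke Menichi's theorem \cite{Men:BVAACCOHA}, which turns any cyclic operad with multiplication into a Batalin-Vilkovisky algebra on cohomology. In this way the graded commutative cup product $\smallsmile$ and the Gerstenhaber bracket $\{\cdot,\cdot\}$ are already in place (see Theorem~\ref{main2} and the explicit formulae of \S\ref{responsabilitacivile}); what has to be produced is a family of $k$-linear operators $\tau_n\colon \corm n \to \corm n$ with $\tau_n^{\,n+1}=\id$ that is compatible with the cofaces, the codegeneracies, and the operadic insertions $\circ_i$ in the manner required by the definition of a cyclic operad, together with the cyclic invariance of the distinguished multiplication and unit elements. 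The Batalin-Vilkovisky operator on $H^\bull_{\rm co}(U,N)$ --- and, when $U_\ract$ is flat over $A$, on $\Cotor^\bull_U(A,N)$ --- is then the Connes coboundary assembled from the $\tau_n$ and the codegeneracies.

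The first ingredient is the right $U$-action on $N$ asserted in the statement. Since $U$ is a left Hopf algebroid it comes equipped with a translation map $u\mapsto u_+\otimes_\Aopp u_-$, and since $A$ is an anti Yetter-Drinfel'd module it carries a right $U$-action; combining these data with the left $U$-module and left $U$-comodule structures of $N$ yields a candidate right action $\ract\colon N\otimes U\to N$ --- the bialgebroid analogue of the antipodal twist familiar from Hopf-cyclic theory. One then checks, by a direct if somewhat involved manipulation of the structure maps --- using the left Hopf algebroid axioms for the translation map together with the Yetter-Drinfel'd compatibility of $N$ and the anti Yetter-Drinfel'd compatibility of $A$ --- that $N$, equipped with its original left coaction and this new right action, is an anti Yetter-Drinfel'd module over $U$.

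Next I would write down the cyclic operator. On $\corm n=(\due U \lact \ract)^{\otimes_\ahha n}\otimes_\ahha N$ the operator $\tau_n$ cyclically permutes the tensor slots, carrying the last entry to the front while using the coactions of the intermediate $U$-factors and the right action just constructed to rebalance the relative tensor products over $A$; it is modelled on the cyclic operator of Hopf-cyclic theory with coefficients in an anti Yetter-Drinfel'd module. The first things to verify are that $\tau_n$ descends to the various $\otimes_\ahha$ --- which follows from the bialgebroid and left Hopf algebroid axioms --- and that on $\corm 0=N$ it reduces to the identity, which is the counit rule; one also records that the operad multiplication and the operad unit are $\tau$-invariant.

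The main obstacle is the verification of the cyclic relations. First, the anti Yetter-Drinfel'd property of $N$ established above is precisely what is needed for the para-cyclic identities relating the $\tau_n$ to the cofaces and codegeneracies, and for the compatibility of the $\tau_n$ with the Gerstenhaber insertions $\circ_i$; these are combinatorial identities which, once all structure maps have been substituted, unwind into iterated applications of coassociativity of the $U$-coactions, associativity of the $U$-actions, and the braided commutativity of the monoid $N$. Second --- and this is the subtle point --- the identity $\tau_n^{\,n+1}=\id$ fails in general: the operators $\tau_n$ a priori define only a para-cyclic module, and it is precisely the \emph{stability} of $N$ with respect to the newly constructed right action that collapses the monodromy $\tau_n^{\,n+1}$ to the identity, in exact analogy with the role of stability in Hopf-cyclic homology. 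With all the axioms of a cyclic operad with multiplication verified, Menichi's theorem \cite{Men:BVAACCOHA} applies, endowing $H^\bull_{\rm co}(U,N)=H(\corm\bull,\gb)$ with the structure of a Batalin-Vilkovisky algebra.
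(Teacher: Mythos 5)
Your proposal follows essentially the same route as the paper: construct the right $U$-action on $N$ by twisting the left action through the translation map and the right character coming from the anti Yetter-Drinfel'd structure on $A$ (the paper's formula is $nu:=(u_-n)\ract\pl u_+$), verify that $N$ becomes anti Yetter-Drinfel'd, use the already-available para-cocyclic operator on $\corm\bull$ whose monodromy is killed exactly by stability, check the cyclic-operad compatibilities with the insertions $\circ_i$ via the Yetter-Drinfel'd and braided-commutativity identities, and conclude by Menichi's theorem. The plan correctly identifies all the ingredients and the role each hypothesis plays, so it is a faithful outline of the paper's own argument, differing only in that the explicit formulas and the long verification of $\tau(\gvf\circ_1\psi)=\tau\psi\circ_q\tau\gvf$ are left unwritten.
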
 

In particular, if  
$U_\ract$ is flat as a right $A$-module, then $\Cotor^\bull_U(A,N)$ is a Batalin-Vilkovisky algebra. This, once more, extends 
a result known in Hopf algebra theory \cite{Men:BVAACCOHA} not only to bialgebroids, but also to nontrivial coefficients. Moreover, we
confirm the conjecture in \cite[\S10]{Men:CMCMIALAM} that 
in case of a Hopf algebra over $k$ endowed with a modular pair $(\gd, \gs)$ in involution, one apparently 
{\em cannot} 
take $\qttr k \gs {}{} \gd$ unless the grouplike element $\gs$ is the unit element in the Hopf algebra, see \S\ref{caprarola} for details.

\subsection{Poisson structures for bialgebroids}
As is shown in \S\ref{ronciglione},
the definition of a {\em Poisson structure} or {\em (quasi-)triangular $r$-matrix} for a left bialgebroid $U$, that is, 
a $2$-cocycle $\gt \in C^2(U,A)$ that fulfills 
\begin{equation*}
\gt \circ_1 \gt = \gt \circ_2 \gt
\end{equation*}
generalises not only Poisson structures for (noncommutative) associative algebras and triangular $r$-matrices for Lie bialgebroids 
(and hence Poisson manifolds as well as skew-symmetric solutions of the classical Yang-Baxter equation in Lie bialgebra theory \cite{Dri:HSOLGLBATGMOCYBE}) but also the ring structure in an associative algebra or,
more general, the notion of operad multiplication for the operad $C^\bull(U,A)$ as given in \S\ref{palermo}.

We then define the differentials
\begin{eqnarray*}
b^\gt: C_n(U,M) \to  C_{n-1}(U,M), & (m,x) & \mapsto - \cL_\gt(m,x), \\
\gb^\gt: C^n(U,A) \to  C^{n+1}(U,A), & \gvf & \mapsto \{\gt, \gvf\}, 
\end{eqnarray*}
where $\cL_\gt$ is the generalised Lie derivative on left Hopf algebroids in \rmref{messagedenoelauxenfantsdefrance2} that was introduced in \cite{KowKra:BVSOEAT}. 
The triple $\big(C_\bull(U,M), b^\gt, B\big)$ can be shown to form a mixed complex, which allows for the definition of {\em cyclic Poisson homology}. 
This approach conceptually unites, for example, Hochschild with Poisson homology (resp.\ cyclic homology with cyclic Poisson homology), see \S\ref{fave}.

\subsection{The Batalin-Vilkovisky algebra $\Tor$}

In case $U$ is a {\em commutative} left Hopf algebroid, the shuffle product $\cdot \times \cdot$ defines the structure of a graded commutative algebra on the homology groups $H_\bull(U,A)$. In case $U$ is Poisson, this structure can be extended to that of a Batalin-Vilkovisky algebra. In \S\ref{castropretorio} we will prove:

\begin{theorem}
Let $U$ be a commutative Poisson left Hopf algebroid with triangular $r$-matrix $\gt$. 
Then there is a $k$-bilinear map
\begin{equation*}
\begin{split}
\{.,.\}_\gt: \ & C_p(U,A) \otimes C_q(U,A) \to C_{p+q-1}(U,A), \qquad p, q \geq 0,\\
& x \otimes y \mapsto (-1)^{|p|} b^\gt(x \times y) + (-1)^{p} b^\gt x \times y + x \times b^\gt y,  
\end{split}
\end{equation*}
which 
induces a Batalin-Vilkovisky algebra structure on $H_\bull(U,A)$.
\end{theorem}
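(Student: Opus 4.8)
The plan is to recognise the statement as the homological (Tor-side) mirror of the cohomological constructions in \S\ref{responsabilitacivile}--\S\ref{palermo}, and to deduce it from the mixed-complex structure $\big(C_\bull(U,A), b^\gt, B\big)$ announced just above together with the graded-commutative shuffle algebra on $H_\bull(U,A)$. First I would verify that, for a commutative left Hopf algebroid, the shuffle product $\times$ descends to a well-defined graded commutative product on homology, which is classical once one checks that $\times$ is a chain map for the ordinary differential $b$; this is where commutativity of $U$ is used. Next I would establish that $b^\gt$ is a \emph{degree $-1$ derivation-up-to-homotopy failure} measured exactly by $\{.,.\}_\gt$: in other words, the displayed formula for $\{x,y\}_\gt$ is precisely the obstruction to $b^\gt$ being a graded derivation of $\times$, so it plays the role of the BV bracket generated by $B^\gt := b^\gt$ in Definition \ref{golfoaranci1}(ii). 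The key structural input is that $\gt$ satisfies $\gt \circ_1 \gt = \gt \circ_2 \gt$, which is what forces $b^\gt b^\gt = 0$ and, after passing to homology, the Jacobi and Leibniz identities.

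Concretely I would proceed in the following steps. Step 1: show $b^\gt$ is a differential, i.e. $(b^\gt)^2 = 0$, by unwinding $b^\gt = -\cL_\gt$ in terms of the generalised Lie derivative of \rmref{messagedenoelauxenfantsdefrance2} and invoking the Poisson condition on $\gt$; this is the homological counterpart of $\gb^\gt \gb^\gt = \{\gt,\{\gt,\cdot\}\} = 0$. Step 2: show $\big(C_\bull(U,A), b^\gt, B\big)$ is a mixed complex, i.e. $b^\gt B + B b^\gt = 0$, using the compatibility of $\cL_\gt$ with the cyclic operator $B$ (a Cartan-type homotopy formula, already a tool in \cite{KowKra:BVSOEAT}). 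Step 3: prove that $\{.,.\}_\gt$ is a chain map with respect to the total differential and hence induces a bracket on $H_\bull(U,A)$ of degree $|1|$. Step 4: check the BV identity of Definition \ref{golfoaranci1}(ii) with $\BB = b^\gt$ on the nose at chain level — here the definition of $\{x,y\}_\gt$ is manifestly arranged so that $\{x,y\}_\gt = (-1)^{|p|}\big(b^\gt(x\times y) - b^\gt x \times y - (-1)^{|p|}\cdots\big)$ matches the required formula up to signs that I would reconcile carefully. Step 5: deduce the graded Leibniz rule and graded Jacobi identity for $\{.,.\}_\gt$ on homology purely formally from the BV identity plus graded commutativity of $\times$, using Menichi-type or Getzler-type arguments (the standard fact that an ``exact'' bracket generated by a square-zero degree $-1$ operator on a graded commutative algebra automatically satisfies Leibniz, and that Jacobi then follows).

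The main obstacle I expect is Step 2 together with the sign bookkeeping threaded through Steps 3--4: proving $b^\gt B + B b^\gt = 0$ requires a genuine Cartan homotopy formula relating $\cL_\gt$, the contraction/insertion operators built from $\gt$, and the Connes operator $B$ on $C_\bull(U,A)$, and getting this exactly right in the bialgebroid setting (with the noncommutative base $A$ still present implicitly through the Hopf-algebroid structure maps) is delicate. A secondary difficulty is that, unlike the cohomological side where one can cite the operad-with-multiplication machinery of \cite{Men:BVAACCOHA} wholesale, here one must argue somewhat by hand, since the homology $H_\bull(U,A)$ is not literally the cohomology of a cyclic operad with multiplication; one instead uses that the commutative case makes $C_\bull(U,A)$ a (homotopy) BV algebra directly. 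I would therefore expect to spend most of the proof translating the identity $\gt\circ_1\gt = \gt\circ_2\gt$ into the precise homological relations $(b^\gt)^2 = 0$ and $b^\gt B + Bb^\gt = 0$, after which the BV structure on $H_\bull(U,A)$ drops out formally.
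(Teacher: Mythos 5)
There is a genuine gap, and it sits exactly where you wave your hands in Step 5. It is \emph{not} true that a square-zero, degree $-1$ operator on a graded commutative algebra automatically generates a bracket satisfying the Leibniz and Jacobi identities: the Leibniz rule for the bracket $\{x,y\}_\gt$ defined as the deviation of $b^\gt$ from being a derivation of $\times$ is \emph{equivalent} to $b^\gt$ being a differential operator of order at most two with respect to the product, i.e.\ to the seven-term identity \rmref{tipa2}, and Jacobi then requires in addition that $(b^\gt)^2$ be of order at most two (here automatic since it vanishes). This is precisely Koszul's theorem, and verifying the second-order property of $\lie_\gt = -b^\gt$ with respect to the shuffle product is the actual content of the paper's proof: the untwisted part $\lie^{\scriptscriptstyle{\rm untw}}_\gt$ satisfies \rmref{tipa2} by inspection, but the twisted part $\lie^{\scriptscriptstyle{\rm tw}}_\gt$ requires a lengthy combinatorial rewriting using \rmref{portaportese1}--\rmref{portaportese3}, and --- crucially --- this identity only holds \emph{after passing to homology}, because the cancellations rely on $b\iota_\gt = \iota_\gt b = 0$ and on $s_{-1}b$ being a boundary. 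Your proposal never engages with this, so the Gerstenhaber/BV identities do not ``drop out formally'' as you claim.

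A secondary misdiagnosis: you identify Step 2 (the mixed-complex relation $b^\gt B + Bb^\gt = 0$) as the main obstacle, but that relation, while true (it is Proposition \ref{c'est-a-dire}, an easy consequence of $[\lie_\gvf, B]=0$), plays no role in Theorem \ref{barberini}; it is only needed later for cyclic Poisson homology and for the strong differential Gerstenhaber statement involving $B$. Steps 1, 3 and 4 of your outline are essentially correct and match the paper (the definition of $\{.,.\}_\gt$ is indeed arranged to be Koszul's bracket \rmref{romoletto} generated by $D=b^\gt$), but the proof stands or falls with the second-order lemma, which is missing from your plan.
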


Again, if $\due U \blact {}$ is projective as left $A$-module, this yields a bracket 
$$
\{.,.\}_\gt: \Tor^U_p(A,A) \otimes \Tor^U_q(A,A) \to \Tor^U_{p+q-1}(A,A).
$$
In our examples in \S\ref{pirano}, 
we illustrate how the Batalin-Vilkovisky structure on $\Tor^\bull_\uhhu(A,A)$ for a commutative Poisson left Hopf algebroid $U$ generalises the classical Koszul bracket on forms, {\em i.e.}, the Batalin-Vilkovisky algebra structure on the exterior algebra $\bigwedge_\ahha^\bull \! L^*$ of the dual of a Lie-Rinehart algebra $(A,L)$. 

We conclude by dealing with the case of how the idea of Lie-Rinehart bialgebras (Lie bialgebroids) induced by a Poisson bivector ({\em i.e.}, a triangular $r$-matrix)
transfers in complete analogy to $\Ext_U$ and $\Tor^U$: whereas a triangular Lie bialgebroid (in the sense of \cite{MacXu:LBAPG}) gives rise \cite{Kos:EGAALBA, Xu:GAABVAIPG} to a pair of strong differential Gerstenhaber algebras in duality  --- one of which is Batalin-Vilkovisky ---, 
in case of a commutative Poisson left Hopf algebroid 
both $H^\bull(U,A)$ and $H_\bull(U,A)$ are strong differential Gerstenhaber algebras as well, the latter being again 
Batalin-Vilkovisky.
\
\\
\
\\

\thanks{ {\bf Acknowledgements.}   \!
It is a pleasure to thank Fabio Gavarini and Ulrich Kr\"ahmer for inspiring 
discussions and helpful comments. 

This research was funded by an INdAM-COFUND Marie Curie grant.

\section{Preliminaries}
\label{prelim}
In this section we not only recall preliminaries on bialgebroids, Hopf
algebroids, (anti) Yetter-Drinfel'd modules and algebras, and (co)cyclic modules for bialgebroids --- mainly from our papers \cite{KowKra:CSIACT, KowKra:BVSOEAT} ---, 
but simultaneously introduce the notation and conventions used throughout the text. 
See \cite{Boe:HA} for more detailed information on bialgebroids and 
Hopf algebroids, and references to the original 
sources. 

\subsection{Bialgebroids}
\label{durchgegangen}
Throughout this paper, $A$ and $U$ are 
(unital associative) $k$-algebras, where $k$ is a commutative ground ring (sometimes of characteristic zero). 
As common, whenever an unadorned tensor product appears, it is meant to be over $k$.
Furthermore, we assume that there be 
a fixed $k$-algebra map
\begin{equation*}
\label{eta}
        \eta : \Ae := A \otimes_k \Aop \rightarrow 
        U.
\end{equation*}
This induces forgetful functors 
$$
\umod \rightarrow \amoda,\quad
\modu \rightarrow \amoda
$$ 
that turn left $U$-modules $N$ respectively right $U$-modules $M$ 
into $A$-bimodules with actions
\begin{equation}
\label{pergolesi}
        a \lact n \ract b:=\eta(a \otimes_k b)n,\quad
        a \blact m \bract b:=m \eta(b \otimes_k a),\quad
        a,b \in A,n \in N,m \in M.
\end{equation}
In particular, left and right multiplication in $U$ 
define $A$-bimodule structures of both these types on 
$U$ itself. 

Generalising the standard result for bialgebras (which 
is the case $A=k$), Schauenburg has proven 
\cite{Schau:DADOQGHA} that
the monoidal structures on $\umod$ for which the forgetful 
functor to $\amoda$ is strictly
monoidal (where $\amoda$ is monoidal via $\otimes_\ahha$) correspond to
what is known as \emph{(left) bialgebroid} (or $\times_\ahha$-bialgebra) structures
on $U$. We refer, {\em e.g.}, to our earlier paper 
\cite{KowKra:DAPIACT} for a detailed definition 
(which is due to Takeuchi \cite{Tak:GOAOAA}). Let us only
recall that a bialgebroid has a coproduct and a counit 
\begin{equation}
\label{cerveza}
        \Delta : U \rightarrow U_\ract \otimes_\ahha \due U \lact {}, \quad
        \varepsilon : U \rightarrow A,
\end{equation}
which turn $U$ into a coalgebra in $\amoda$.
Unlike for $A=k$, the counit $ \varepsilon $ is not necessarily a
ring homomorphism but only yields a left $U$-module structure on 
$A$ with action of $u \in U$ 
on $a \in A$ given by 
\begin{equation}
\label{stabatmater1}
u a := \varepsilon (u \bract a).
\end{equation}
Furthermore, 
$\Delta$ is required to corestrict to a map from $U$ to the
Sweedler-Takeuchi product $U
\times_\ahha U$, which is the $\Ae$-submodule of  $U \otimes_\ahha U$
whose elements $\sum_i u_i \otimes_\ahha v_i$ fulfil 
\begin{equation}
\label{tellmemore}
 \textstyle
\sum_i a \blact u_i \otimes_\ahha v_i
		  = \sum_i u_i \otimes_\ahha v_i \bract a, \
		  \forall a \in A.
\end{equation}
In the sequel, we will freely use Sweedler's notation  
$ \Delta (u)=:u_{(1)} \otimes_\ahha u_{(2)}$.

In the same paper \cite{Schau:DADOQGHA}, 
Schauenburg generalised
the notion of a Hopf algebra to the bialgebroid setting by introducing 
$\times_\ahha$-Hopf algebras which we usually refer to as \emph{left Hopf algebroids}. 
The crucial piece of additional structure on top of the bialgebroid one 
is the \emph{translation map}
\begin{equation}
\label{pm}
		  U \rightarrow 
		  {}_\blact U \otimes_\Aopp U_\ract, \quad u \mapsto u_+ \otimes_\Aopp u_-,
\end{equation}
where the right hand side is to be understood as a Sweedler-type notation, {\em i.e.}, indicating a sum. We will make permanent use of the following technical identities that hold for the map \rmref{pm}, 
see \cite[Proposition~3.7]{Schau:DADOQGHA}: 
\begin{proposition}
Let $U$ be a left Hopf algebroid over $A$. For all $u, v \in U$, $a,b \in A$ one has 
\begin{eqnarray}
\label{Sch1}
u_+ \otimes_\Aopp  u_- & \in 
& U \times_\Aopp U, \\
\label{Sch2}
u_{+(1)} \otimes_\ahha u_{+(2)} u_- &=& u \otimes_\ahha 1 \in U_\ract \otimes_\ahha {}_\lact U, \\
\label{Sch3}
u_{(1)+} \otimes_\Aopp u_{(1)-} u_{(2)}  &=& u \otimes_\Aopp  1 \in  {}_\blact U
\otimes_\Aopp  U_\ract, \\ 
\label{Sch4}
u_{+(1)} \otimes_\ahha u_{+(2)} \otimes_\Aopp  u_{-} &=& u_{(1)} \otimes_\ahha
u_{(2)+} \otimes_\Aopp  u_{(2)-},\\
\label{Sch5}
u_+ \otimes_\Aopp  u_{-(1)} \otimes_\ahha u_{-(2)} &=& 
u_{++} \otimes_\Aopp
u_- \otimes_\ahha u_{+-}, \\
\label{Sch6}
(uv)_+ \otimes_\Aopp  (uv)_- &=& u_+v_+
\otimes_\Aopp  v_-u_-, 
\\ 
\label{Sch7}
u_+u_- &=& s (\varepsilon (u)), \\
\label{Sch8}
\varepsilon(u_-) \blact u_+  &=& u, \\
\label{Sch9}
(s (a) t (b))_+ \otimes_\Aopp  (s (a) t (b) )_- 
&=& s (a) \otimes_\Aopp  s (b), 
\end{eqnarray}
where in \rmref{Sch3} we mean the Sweedler-Takeuchi product
\begin{equation*}
\label{petrarca}
		  U \times_\Aopp  U:=
		  \left\{\textstyle\sum_i u_i \otimes_\Aopp  v_i \in 
		  {}_\blact U \otimes_\Aopp  U_\ract\mid
		  \sum_i u_i \ract a \otimes_\Aopp  v_i=
		  \sum_i u_i \otimes_\Aopp  a \blact
		  v_i
		  \right\},
\end{equation*}
which is an algebra by factorwise multiplication, but with opposite 
multiplication on the second factor, and where in 
(\ref{Sch7}) and (\ref{Sch9}) we use the \emph{source} and \emph{target} maps 
\begin{equation}
\label{basmati}
        s,t : A \rightarrow U,\quad s(a):=\eta (a \otimes_k 1),\quad
        t(b):=\eta (1 \otimes_k b). 
\end{equation}
\end{proposition}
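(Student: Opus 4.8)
The whole proposition rests on a single piece of structure, the Hopf--Galois (``canonical'') map
\[
\beta : {}_\blact U \otimes_\Aopp U_\ract \longrightarrow U_\ract \otimes_\ahha {}_\lact U, \qquad u \otimes_\Aopp v \longmapsto u_{(1)} \otimes_\ahha u_{(2)} v,
\]
whose bijectivity is precisely the extra datum that upgrades the left bialgebroid $(U,A)$ to a left Hopf algebroid --- equivalently, the existence of the translation map \rmref{pm}, which is by definition $u_+ \otimes_\Aopp u_- := \beta^{-1}(u \otimes_\ahha 1)$. The plan is to read off each of \rmref{Sch1}--\rmref{Sch9} by plugging known elements into $\beta$ and using that $\beta$ --- and, for the three-leg identities, its base changes $\beta \otimes \id$ and $\id \otimes \beta$, which are again bijective --- is invertible. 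The first observation is that \rmref{Sch2} is free: it is literally the equation $\beta(u_+ \otimes_\Aopp u_-) = u \otimes_\ahha 1$.

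The first real step is to make $\beta^{-1}$ explicit. I would set $\gamma(x \otimes_\ahha y) := x_+ \otimes_\Aopp x_- y$, check that $\gamma$ is well defined on the balanced tensor product, and verify $\beta \circ \gamma = \id$ by a one-line computation invoking \rmref{Sch2}; bijectivity of $\beta$ then identifies $\gamma = \beta^{-1}$ and hence also yields $\gamma \circ \beta = \id$, which spelled out at $u \otimes_\ahha 1$ is precisely \rmref{Sch3}. In parallel I would prove \rmref{Sch1}, that $u_+ \otimes_\Aopp u_-$ lands in the Sweedler--Takeuchi product $U \times_\Aopp U$, by applying $\beta$ to the two candidate-equal elements $u_+ \ract a \otimes_\Aopp u_-$ and $u_+ \otimes_\Aopp a \blact u_-$, observing that their images agree because of the Takeuchi compatibility \rmref{tellmemore} of $\Delta$ together with the defining relations of the target tensor product over $\ahha$, and concluding by injectivity of $\beta$. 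The identity \rmref{Sch9} is of the same flavour and can be done on its own: compute $\beta(s(a) \otimes_\Aopp s(b))$ using $\Delta(s(a)) = s(a) \otimes_\ahha 1$ and the source/target relations, find it equals $s(a)t(b) \otimes_\ahha 1$, and appeal once more to injectivity of $\beta$.

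With \rmref{Sch1}--\rmref{Sch3} available the remaining identities are routine. The ``counit'' identities \rmref{Sch7} and \rmref{Sch8} come from applying $\varepsilon$ to one of the two tensor legs of \rmref{Sch2} (resp.\ \rmref{Sch3}) and using the counit axioms $\varepsilon(w_{(1)}) \lact w_{(2)} = w$, $w_{(1)} \ract \varepsilon(w_{(2)}) = w$ together with the normalisations $\varepsilon \circ s = \varepsilon \circ t = \id_A$; concretely, \rmref{Sch7} is $\varepsilon$ applied to the \emph{first} leg of \rmref{Sch2}, which collapses the left-hand side to $u_+ u_-$ and the right-hand side to $s(\varepsilon(u))$. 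The ``coassociativity'' identities \rmref{Sch4} and \rmref{Sch5} express that the translation map is suitably coassociative; I would prove each by postcomposing both sides with a base change of $\beta$ in the tensor slot carrying $u_-$ (or $u_{(i)-}$), which cancels a $\beta^{-1}$ and leaves an equality that reduces to coassociativity of $\Delta$ plus \rmref{Sch2}--\rmref{Sch3}. Finally the multiplicativity law \rmref{Sch6}, $(uv)_+ \otimes_\Aopp (uv)_- = u_+ v_+ \otimes_\Aopp v_- u_-$, is obtained by applying $\beta$ to the right-hand side, expanding $(u_+ v_+)_{(1)} \otimes_\ahha (u_+ v_+)_{(2)}$ by multiplicativity of $\Delta$, then collapsing first the $v$-block (via $v_{+(1)} \otimes v_{+(2)} v_- = v \otimes 1$) and then the $u$-block by \rmref{Sch2}; the outcome is $uv \otimes_\ahha 1 = \beta\bigl((uv)_+ \otimes_\Aopp (uv)_-\bigr)$, so \rmref{Sch6} follows from injectivity of $\beta$ --- where one also uses \rmref{Sch1} to know that $u_+ v_+ \otimes_\Aopp v_- u_-$ is a legitimate element of the domain of $\beta$.

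I expect the genuine obstacle to be bookkeeping, not insight. Recall that $U$ carries the four distinct $A$-actions $\lact, \ract, \blact, \bract$ of \rmref{pergolesi}, that $\otimes_\ahha$ and $\otimes_\Aopp$ pair these in prescribed ways, and that at \emph{every} step above one must confirm that the auxiliary map being applied --- $\gamma$, the leg-wise applications of $\varepsilon$, the base changes of $\beta$, the ``collapse'' maps used for \rmref{Sch6} --- is actually well defined on the relevant balanced tensor product. That is exactly the point at which the Takeuchi conditions \rmref{tellmemore} for $\Delta$ and \rmref{Sch1} for the translation map get consumed; once the module structures have been pinned down, each individual identity is a short, essentially forced computation. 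This is of course Schauenburg's \cite[Proposition~3.7]{Schau:DADOQGHA}, and the route sketched here follows his.
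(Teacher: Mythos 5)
Your proposal is correct in outline and is precisely the standard argument: everything is read off from the bijectivity of the Galois map $\beta(u\otimes_\Aopp v)=u_{(1)}\otimes_\ahha u_{(2)}v$, with \rmref{Sch2} being the definition of the translation map, \rmref{Sch3} its two-sided-inverse counterpart, and the remaining identities obtained by injectivity of $\beta$ (or its base changes) together with coassociativity, counitality, and multiplicativity of $\Delta$. The paper itself offers no proof of this proposition --- it quotes it verbatim from \cite[Proposition~3.7]{Schau:DADOQGHA} --- and your route is exactly the one in that cited source, so there is nothing to compare beyond the (real, but routine) bookkeeping of the four $A$-actions that you already flag yourself.
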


Beyond the obvious example of a left Hopf algebroid given by a Hopf algebra $H$ with antipode $S$, 
where the translation map is given by 
$$
h_+ \otimes h_- := h_{(1)} \otimes S(h_{(2)}), \qquad h \in H,
$$
we recall below three (by now) standard examples of left Hopf algebroids 
since they will be used as test cases throughout the text: the first one 
gives access to the Hochschild theory of associative algebras,
the second two to, {\em e.g.}, multivector fields and differential forms in differential geometry:

\begin{example}
\label{todi}
Recall from \cite{Schau:DADOQGHA} that 
$\Ae := A \otimes_k \Aop$ is for any $k$-algebra $A$ 
a left Hopf algebroid over 
$A$ with structure maps
$$
		  s(a) := a \otimes_k 1,\quad
		   t(b) := 1 \otimes_k b,\quad 
		  \gD(a \otimes_k b) := (a
		  \otimes_k 1) 
		  \otimes_\ahha (1 \otimes_k b),\quad
		  \gve(a \otimes_k b) := ab.
$$
The translation map is given by
$$
		  (a \otimes_k b)_+ \otimes_\Aopp 
		  (a \otimes_k b)_- := (a \otimes_k 1)
		  \otimes_\Aopp (b \otimes_k 1).
$$
\end{example}

\begin{example}
\label{castelliromani}
Let $(A,L)$ be a Lie-Rinehart algebra (geometrically, a Lie algebroid) over a commutative $k$-algebra $A$ and $V\!L$ be its universal enveloping algebra 
(see \cite{Rin:DFOGCA}). The left Hopf algebroid structure of $V\!L$ has been given in \cite{KowKra:DAPIACT}; 
as therein, we denote by the same symbols elements $a \in A$ and $X \in L$ and the corresponding generators in $V\!L$.
The maps $s = t$ are equal to the canonical injection $A \to V\!L$. On generators, the coproduct and the counit are given by 
\begin{equation}
\label{rashomon1}
\begin{array}{rclrcl}
\gD(X) &:=& X \otimes_\ahha 1 + 1 \otimes_\ahha X, & \qquad \gve(X) &:=& 0, \\
\gD(a) &:=& a \otimes_\ahha 1, &                 \qquad \gve(a) &:=& a,
\end{array}
\end{equation}
whereas the translation map is defined by
\begin{equation}
\label{rashomon2}
X_+ \otimes_\Aopp X_- := X \otimes_\Aopp 1 - 1 \otimes_\Aopp X, \qquad a_+ \otimes_\Aopp a_- := a \otimes_\ahha 1.
\end{equation}
By universality, these maps defined on generators can be extended to $V\!L$. If $(A,L) = (\cinf M, \gG^\infty(E))$ 
arises from a Lie algebroid $E \to \cM$ over a smooth manifold $\cM$, one can consider $V\!L$ as the space of $E$-differential operators on $\cM$ (see, for example, \cite{CanWei:GMFNCA}).
\end{example}

\begin{example}
\label{cambridge}
The third example (see \cite{KowPos:TCTOHA, NesTsy:DOSLADOHSSAIT, CalVdB:HCAAC}) is in some sense dual to the preceding one: let again $(A, L)$ be a Lie-Rinehart algebra and define the $A$-linear dual $J\!L := \Hom_\ahha(V\!L,A)$, 
the {\em jet space} of $(A,L)$. 
By duality, $J\!L$  
carries a commutative $\Ae$-algebra structure with product
\begin{equation}
\label{kaesekuchen1}
        (fg) (u) = 
        f(u_{(1)}) g(u_{(2)}), \qquad 
        f, g \in J\!L, \ u \in V\!L,
\end{equation}
the unit given by the counit $\gve$ of $V\!L$, 
and source and target maps given by 
\begin{equation}
\label{sarare}
        s(a)(u) := a \gve(u) = \varepsilon (au), \qquad 
        t(a)(u) := \gve(ua), \qquad 
        a \in A, u \in V\!L.
\end{equation}
The $\Ae$-ring $J\!L$ is complete with respect to 
the (topology defined by the) decreasing 
filtration whose degree $p$ part consists of those 
functionals that vanish on the $A$-linear span $(V\!L)_{\le p} \,{\subseteq}\,
V\!L$ of all 
monomials in up to $p$ elements of $L$. In case $L$ is finitely
generated projective over $A$,
Rinehart's generalised PBW theorem \cite[Thm.~3.1]{Rin:DFOGCA}
identifies $J\!L$ with the completed symmetric algebra 
of the $A$-module $L^* := \Hom_\ahha(L,A)$. Moreover, the filtration of $J\!L$ induces one
of $J\!L \otimes_\ahha J\!L$; if we denote by $J\!L
\hat\otimes_\ahha J\!L$ 
the completion, the product of $V\!L$ yields, as in \cite[Lem.~3.16]{KowPos:TCTOHA}, a coproduct
$\gD: J\!L \to J\!L \hat\otimes_\ahha J\!L$ by 
\begin{equation}
\label{kaesekuchen2}
        f(uv) =: \gD(f)(u \otimes_\Aopp v) = f_{(1)}(u f_{(2)}(v)).
\end{equation}
Along with this coproduct comes the counit of $J\!L$ given by evaluation on the unit element, that is,  
$f \mapsto f(1_{\scriptscriptstyle{V\!L}})$. 
These maps are part of a 
{\em complete (left) Hopf algebroid} structure on $J\!L$, see \cite[Appendix A]{Qui:RHT} for
complete Hopf algebras, its Hopf algebroid generalisation being
straightforward. 
Finally, extending the {\em Grothendieck connection} from $L$ to $V\!L$ defines a map
\begin{equation}
\label{kaesekuchen3}
        (Sf)(u) := \varepsilon (u_+ f(u_-)), \qquad u \in V\!L, f \in J\!L.
\end{equation}
With this map that may be called, as the notation suggests, the {\em antipode} of $J\!L$, the jet space 
is not only a left but a 
{\em full} complete Hopf algebroid 
in the sense of B\"ohm and
Szlach\'anyi \cite{Boe:HA}. A short computation yields that the antipode is an involution,
$
S^2 = \mathrm{id},
$
and the translation map
\rmref{pm} results as   
\begin{equation}
\label{apfelorangeingwersaft}
f_+ \hat\otimes_\Aopp f_- := f_{(1)} \hat\otimes_\Aopp S(f_{(2)}), 
\end{equation}
formally similar to the case of Hopf algebras.
Again, if $(A,L) = (\cinf \cM, \gG^\infty(E))$ 
arises from a Lie algebroid $E \to \cM$ over a smooth manifold $\cM$, 
the jet space $J\!L$ can be considered as the space of $E$-differential forms on the manifold $\cM$.
\end{example}

\subsection{Comodules, (co)module algebras, and (anti) Yetter-Drinfel'd modules}
\subsubsection{Comodules over left bialgebroids}
Recall, {\em e.g.}, from \cite{Boe:HA} that a left comodule for a left bialgebroid $U$ is a left comodule of the coring underlying $U$, {\em i.e.}, a left $A$-module $M$ and a left $A$-module map
$$
     \Delta_\emme:   M \rightarrow 
        U_\ract \otimes_\ahha {}_\blact  M, \quad
        m \mapsto m_{(-1)} \otimes_\ahha m_{(0)},
$$  
satisfying the usual coassociativity and counitality axioms. We denote the category of left $U$-comodules by $\ucomod$. 
On any $M \in \ucomod$ there is an induced {\em right} $A$-action given by 
$
ma := \gve(a \blact m_{(-1)})m_{(0)},
$
and $\Delta_\emme$ is then an $\Ae$-module morphism 
$ 
M \rightarrow 
        U_\ract \times_\ahha {}_\blact  M,
$
where $  U_\ract \times_\ahha {}_\blact  M$ is the $\Ae$-submodule of
$U_\ract \otimes_\ahha {}_\blact  M$ whose elements $\sum_i u_i
\otimes_\ahha m_i$ fulfil 
\begin{equation}
\label{auchnochnicht}
\textstyle
\sum_i a \blact u_i \otimes_\ahha m_i
		  = \sum_i u_i \otimes_\ahha m_i a, \
		  \forall a \in A.
\end{equation}
In particular, the $\Ae$-linearity reads
\begin{eqnarray}
\label{maotsetung}
		  \gD_\emme(amb) &\!\!\!\!=&\!\!\!\! 
		  a \lact  m_{(-1)} \bract b \otimes_\ahha
		  m_{(0)}, \quad \forall m \in M, \ a, b \in A.
\end{eqnarray}

For later use, let us mention that the base algebra $A$ itself is a left $U$-comodule with canonical coaction
\begin{equation}
\label{stabatmater2}
\gD_\ahha: A \to U \otimes_\ahha A \simeq U, \quad a \mapsto s(a),
\end{equation}
where $s$ is the source map from \rmref{basmati}.

\subsubsection{(Co)module algebras}
In order to introduce coefficients for the subsequent Gerstenhaber algebras, we will additionally need the subsequent concepts.
Similarly as for bialgebras, 
there exist the notions of monoid in the categories $\umod$ resp.\ $\ucomod$ of left modules resp.\ left comodules over a left bialgebroid $U$, with some particular attention to be paid to the underlying $A$-bimodule structures:

\begin{definition}[\cite{KadSzl:BAODTEAD}]
\label{LMR}
A {\em left $U$-module algebra} $M$
is a monoid in $\umod$. That is, $M \in \umod$ carries a canonical $A$-ring
structure with $A$-balanced multiplication 
$m \otimes_\ahha m' \mapsto m \cdot_\emme m'$ for $m, m' \in M$, 
and unit map $A \to M, \ a \mapsto a \lact 1_\emme
= 1_\emme \ract a$ such that for $u \in U, \ m, m' \in M$
\begin{equation}
\label{zilvesta}
u(m\cdot_\emme m') = (u_{(1)}m)\cdot_\emme (u_{(2)}m') \quad \mbox{and} \quad
u1_\emme = \gve(u) \lact 1_\emme = 1_\emme \ract \gve(u)
\end{equation}
holds.
\end{definition}

For example, the base algebra $A$ is a left $U$-module algebra with $U$-action given via the counit as in \rmref{stabatmater1}, but
$U$ itself usually is not. We remark that in case $U=\Ae$ an $\Ae$-module algebra is also called an {\em $A$-ring} or an {\em $A$-algebra}.

Observe in particular that with the induced $\Ae$-module
structure on $M$ given as in \rmref{pergolesi} and the $\Ae$-linearity of the coproduct, one has
\begin{equation}
\begin{array}{c}
\label{bilet}
a \lact (m \cdot_\emme m') = (a \lact m) \cdot_\emme m', 
\\
(m \cdot_\emme m') \ract a = m \cdot_\emme (m' \ract a), 
\end{array}
\end{equation}
and moreover
\begin{equation}
\label{normalny}
m \cdot_\emme (a \lact m') = (m \ract a) \cdot_\emme m'.
\end{equation}

Dually, we shall need the notion of a monoid in $\ucomod$, see, {\em e.g.}, \cite{BoeSte:CCOBAAVC}:
\begin{definition}
\label{LMC}
A {\em left $U$-comodule algebra} $N$
is a monoid in $\ucomod$. That is, $N \in \ucomod$ 
with coaction $\gD_\enne: n \mapsto n_{(-1)} \otimes_\ahha n_{(0)}$ 
moreover carries a canonical $A$-ring 
structure with $A$-balanced multiplication $n \otimes_A n' \mapsto n \cdot_\enne n'$ for $n, n' \in N$ 
such that
\begin{equation}
\label{corsasemplice}
\gD_\enne(1_\enne) = 1_\uhhu \otimes_\ahha 1_\enne, \quad \gD_\enne(n \cdot_\enne n') = n_{(-1)}n'_{(-1)} \otimes_\ahha n_{(0)} \cdot_\enne n'_{(0)}.
\end{equation}
\end{definition}
For example, both the base algebra $A$ with left 
$U$-coaction $a \mapsto s(a)$ as well as the ring $U$ itself by means of the coproduct $\gD$ are $U$-comodule algebras.
Observe that for a left $U$-comodule algebra relations with respect to the underlying left $\Ae$-module structure identical to those in \rmref{bilet} follow from \rmref{maotsetung}. 

Of course, one can also define comonoids in $\umod$ and $\ucomod$, respectively, but they are not needed in the sequel.

\subsubsection{(Anti) Yetter-Drinfel'd modules and Yetter-Drinfel'd algebras}

For a left bialgebroid $U$ there exists the notion of Yetter-Drinfel'd module (or {\em crossed bimodule}), {\em i.e.}, a module which is simultaneously a left comodule with a certain compatibility between action and coaction. For bialgebras, this concept goes back to \cite{Yet:QGAROMC}, 
whereas the bialgebroid version is due to \cite{Schau:DADOQGHA}:

\begin{definition}
A left $U$-module $N$ which is simultaneously a left comodule over a left bialgebroid $U$
is called a \emph{Yetter-Drinfel'd module (YD)} 
if the full $\Ae$-module structure 
${}_\lact N_\ract$ of the module coincides with that underlying 
the comodule, and if one has
\begin{equation}
\label{huhomezone1a}
		 (u_{(1)}n)_{(-1)} u_{(2)} \otimes_\ahha (u_{(1)}n)_{(0)}= 
		  u_{(1)} n_{(-1)} \otimes_\ahha u_{(2)} n_{(0)}.
\end{equation}
\end{definition}

The category $\yd$ of Yetter-Drinfel'd modules is monoidal with respect to the tensor product in \rmref{cerveza}, equipped with the diagonal module 
and the codiagonal comodule structure. We only mention as a side remark \cite[Prop.~4.4]{Schau:DADOQGHA} that $\yd$ is equivalent to the weak centre of the category $\umod$ with (pre)braiding
$$
\gs_{\enne, \enne'}: N_\ract \otimes_\ahha \due {N'} \lact {} \to {N'}_\ract \otimes_\ahha \due N \lact {}, 
\quad n \otimes_\ahha n' \mapsto n_{(-1)}n' \otimes_\ahha n_{(0)}. 
$$
The importance of the existence of this braiding for our later constructions of Gerstenhaber algebras is provided by the following notion (see, for example, \cite{BrzMil:BBAD}):

\begin{definition}
\label{fiocchi}
A {\em Yetter-Drinfel'd algebra} is a monoid in $\yd$, {\em i.e.}, an $A$-ring $N$ which is both a left $U$-module algebra and a left $U$-comodule algebra plus the compatibility condition \rmref{huhomezone1a} between action and coaction. 
A Yetter-Drinfel'd algebra is said to be {\em braided commutative} if the multiplication in $N$ is commutative with respect to $\gs_{\enne,\enne}$, that is,
\begin{equation}
\label{circolodegliartisti}
n \cdot_\enne n' = (n_{(-1)}n') \cdot_\enne n_{(0)}, \quad \mbox{for all} \ n, n' \in N.
\end{equation}
\end{definition}

Observe that \rmref{circolodegliartisti} is well defined by \rmref{normalny} as well as \rmref{maotsetung}.
Needless to say that the notion of braided commutativity is entirely independent of whether $N$ itself as an algebra is commutative: 
for example, the base algebra $A$ of any bialgebroid $U$ is always a braided commutative Yetter-Drinfel'd algebra by means of the canonical left $U$-action \rmref{stabatmater1} and left $U$-coaction \rmref{stabatmater2}.

If $U$ happens to be a {\em left Hopf algebroid}, one can give a sort of opposite notion of anti Yetter-Drinfel'd modules.
The following particular class of right modules which are also left comodules 
was introduced in \cite{HajKhaRanSom:SAYDM, JarSte:HCHARCHOHGE} for Hopf algebras and 
in \cite{BoeSte:CCOBAAVC} 
for left Hopf algebroids:
\begin{definition}
A right $U$-module left $U$-comodule $M$ over a left Hopf algebroid $U$
is called \emph{anti Yetter-Drinfel'd module (aYD)} 
if the full $\Ae$-module structure 
${}_\blact M_\bract$ of the module coincides with that underlying 
the comodule, and if one has
\begin{equation}
\label{huhomezone2}
		 (mu)_{(-1)} \otimes_\ahha (mu)_{(0)}= 
		  u_- m_{(-1)} u_{+(1)} \otimes_\ahha m_{(0)} u_{+(2)}
\end{equation}
for all $m \in M,u \in U$. Such a right module left comodule $M$ 
is called \emph{stable (SaYD)} if one has
$$
m_{(0)}m_{(-1)} = m.
$$
\end{definition}
Observe that the category $\ayd$ of anti Yetter-Drinfel'd modules over a left Hopf algebroid is {\em not} monoidal, 
not even in the Hopf algebra case, {\em i.e.}, for $A=k$.

\begin{rem}
Note that for a left Hopf algebroid $U$ and a Yetter-Drinfel'd module $N$, the compatibility condition \rmref{huhomezone1a} can be expressed as 
\begin{equation}
\label{huhomezone1b}
(un)_{(-1)} \otimes_\ahha (un)_{(0)}= 	  u_{+(1)} n_{(-1)} u_{-} \otimes_\ahha u_{+(2)} n_{(0)},
\end{equation}
showing more structural symmetry with respect to the anti Yetter-Drinfel'd case. 
However, this formulation obscures the fact that Yetter-Drinfel'd modules already exist on the bialgebroid level.
\end{rem}

\subsection{The para-(co)cyclic $k$-modules 
$C_\bull(U,M)$ and $C^\bull_{\rm co}(U,N)$}

The Gerstenhaber and Batalin-Vilkovisky algebras 
that we are going to study in this paper are 
obtained as the (co)simplicial (co)homology of 
para-(co)cyclic $k$-modules of the following form  
\cite{KowKra:CSIACT}:

\begin{proposition}
\label{kamille}
(1) For every $M \in \modu$ over a left bialgebroid $U$ 
there is a well-defined simplicial 
$k$-module structure on   
$$
        C_\bull(U,M) := 
        M \otimes_\Aopp  
        ({}_\blact U_ \ract)^{\otimes_\Aopp
          \bull}
$$
whose face maps in degree $n \geq 1$ are given by
\begin{equation*}
\!\!\!
\begin{array}{rcll}
        \dd_i(m,x)  
&\!\!\!\!\! =& \!\!\!\!\!\left\{ \!\!\!
\begin{array}{l}
        (m,u^1, \ldots,\varepsilon(u^n) \blact u^{n-1}),
\\
(m,\ldots,u^{n-i} u^{n-i+1},
\ldots,u^n) 
\\
(mu^1,u^2,\ldots,u^n) 
\end{array}\right.  & \!\!\!\!\!\!\!\!\!\!\!\! \,  \begin{array}{l} \mbox{if} \ i \!=\! 0, \\ \mbox{if} \ 1
\!  \leq \! i \!\leq\! n-1, \\ \mbox{if} \ i \! = \! n, \end{array} 
\medskip
\\
\medskip
&&&{\hspace*{-8.2cm}{\mbox{and vanish for $n =0$, that is, for elements in $M$. The degeneracies for $n \geq 0$ read:}}}
\\
\sss_j(m,x) &\!\!\!\!\! =&\!\!\!\!\!  \left\{ \!\!\!
\begin{array}{l} (m,u^1,  
\ldots,u^n,1)
\\
(m,\ldots,u^{n-j}, 1,  
u^{n-j+1},\ldots,u^n)  
\\
(m,1,u^1,\ldots,u^n) 
\end{array}\right.   & \!\!\!\!\!\!\!\!\!\!\!  \begin{array}{l} 
\mbox{if} \ j\!=\!0, \\ 
\mbox{if} \ 1 \!\leq\! j \!\leq\! n-1, \\  \mbox{if} \ j\! = \!n. \end{array} 
\end{array}
\end{equation*}
Here and in what follows, 
we denote elementary tensors in 
$C_\bull(U,M)$ by
$$
        (m,x):=(m,u^1,\ldots,u^n),\quad
m \in M,u^1,\ldots,u^n \in U.
$$
For a right $U$-module left $U$-comodule $M$ over 
a left {\em Hopf} algebroid $U$, the $k$-module $C_\bull(U,M)$  
becomes a para-cyclic $k$-module via   
\begin{equation*} 
\ttt(m,x) = 
(m_{(0)} u^1_+,u^2_+,\ldots,u^n_+,
u^n_- \cdots u^1_- m_{(-1)}).
\end{equation*}
This para-cyclic $k$-module 
is cyclic if $M$ is a stable anti Yetter-Drinfel'd module. 


(2) On the other hand, for $M' \in \ucomod$ for a left bialgebroid $U$, there is a well-defined cosimplicial $k$-module structure on 
$$
C^\bull_{\rm co}(U,M') \, := \,\, (\due U \lact \ract)^{\otimes_\ahha  \bull} \otimes_\ahha M',
$$
with cofaces in degree $n \geq 1$ given by
\begin{equation}
\!\! \begin{array}{rll}
\label{anightinpyongyang}
\gd_i(z, m') \!\!\!\!&= \left\{\!\!\!
\begin{array}{l} (1, u^1, \ldots
 , u^n , m')  
\\ 
(u^1 , \ldots , \gD (u^i) , \ldots
 , u^n , m')
\\
(u^1 , \ldots , u^n , m'_{(-1)} , m'_{(0)}) 
\end{array}\right. 
&   \begin{array}{l} \mbox{if} \ i=0, \\ \mbox{if} \
  1 \leq i \leq n, \\ \mbox{if} \ i = n + 1,  \end{array} 
\medskip
\\
\medskip
&&{\hspace*{-8cm}{\mbox{and for $n =0$, that is, on $M'$ by}}}
\\
\gd_j(m') \!\!\!\! &= \left\{ \!\!\!
\begin{array}{l}
		  (1, m')  \quad
\\
(m'_{(-1)} , m'_{(0)})  \quad 
\end{array}\right. &  
\begin{array}{l} \mbox{if} \ j=0, \\ \mbox{if} \
  j = 1.  \end{array} 
\medskip
\\
\medskip
&&{\hspace*{-8.1cm}{\mbox{The codegeneracies for $n \geq 1$ read, on the other hand,}}}
\\
\gs_i(z , m') \!\!\!\! 
&= (u^1 , \ldots ,
{\varepsilon} (u^{i+1}) , \ldots , u^n , m') & \ \  0 \leq i \leq n-1,
\end{array}
\end{equation}
and vanish on $M'$. 
Similarly as above, here and in what follows, we denote elementary tensors in $C^\bull_{\rm co}(U,M')$ by 
$$
(z,m'):=(u^1, \ldots ,  u^n, m'), \quad m' \in M', \ u^1, \ldots, u^n \in U, 
$$
if no confusion with the homology case can arise.
Again, for a right $U$-module left $U$-comodule $M'$ over a left {\em Hopf} algebroid $U$, the $k$-module $C^\bull_{\rm co}(U,M')$ becomes a para-cocyclic $k$-module by means of
\begin{equation}
\label{casadilivia}
\tau(z,m') = (u^1_{-(1)}u^2 , \ldots, u^1_{-(n-1)}u^n, u^1_{-(n)}m'_{(-1)}, m'_{(0)}u^1_+),  
\end{equation}
which is cocyclic if $M'$ is a stable anti Yetter-Drinfel'd module.
\end{proposition}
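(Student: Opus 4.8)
The statement is a package of simplicial, cosimplicial and (co)cyclic identities for explicitly given structure maps, so the proof is a mechanical verification; it is essentially the content of \cite{KowKra:CSIACT}, and I would carry it out as the following case analysis.

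\textbf{Part (1): the simplicial structure.} First one checks that each $\dd_i$ and $\sss_j$ descends to the tensor products over $\Aopp$: for the $\sss_j$ and for $\dd_i$ with $0<i<n$ this is the $A$-bilinearity of the multiplication of $U$ with respect to the ${}_\blact U_\ract$-structure; for $\dd_0$ it is the counit axiom of the $A$-coring underlying $U$; and for $\dd_n$ it is that the right $U$-action on $M$ is compatible with the $A^{\mathrm{op}}$-bimodule structure \rmref{pergolesi}. The simplicial identities $\dd_i\dd_j = \dd_{j-1}\dd_i$ $(i<j)$, $\sss_i\sss_j = \sss_{j+1}\sss_i$ $(i\le j)$, and the mixed face--degeneracy relations then split into three regimes: both indices strictly interior, where they reduce to associativity of the product of $U$; one index equal to $0$ or $n$, where they follow from counitality of $\varepsilon$ together with the right $U$-module axiom for $M$; and both at the ends, which is immediate from the formulas.

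\textbf{Part (1): the para-(co)cyclic operator.} For a left Hopf algebroid and a right $U$-module left $U$-comodule $M$, one first checks, via \rmref{Sch1}, that $\ttt = \ttt_n$ is well defined on $C_n(U,M)$, and then that it intertwines the faces and degeneracies according to the cyclic relations (notably $\dd_0\ttt_n = \dd_n$). The interior cases use only multiplicativity \rmref{Sch6} of the translation map; the boundary cases also invoke \rmref{Sch2}, \rmref{Sch7}, \rmref{Sch8} to collapse the resulting $u_+u_-$-terms and the left $U$-comodule axiom for $M$. This already gives the para-cyclic module. The additional input for \emph{cyclicity} is the identity $\ttt_n^{n+1} = \mathrm{id}$: one iterates $\ttt_n$, reorganising the nested translation legs by \rmref{Sch4} and \rmref{Sch5}, assembling $u^n_- \cdots u^1_-$ via \rmref{Sch6}, and cancelling $u_+u_-$-pairs via \rmref{Sch2}; at some stage the coaction of $m$ must be pushed past the actions of the $u^j$, which is precisely the anti Yetter--Drinfel'd compatibility \rmref{huhomezone2}, and after $n+1$ steps the tuple $(m_{(0)}m_{(-1)}, u^1, \ldots, u^n)$ reappears, equal to $(m, u^1, \ldots, u^n)$ exactly when $M$ is stable. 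I expect this last iteration to be the genuine obstacle: it is the only step using both \rmref{huhomezone2} and stability, and it needs a careful induction on $n$ to control the nested translation maps.

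\textbf{Part (2).} This is the formal dual of (1): one checks that the $\gd_i$ and $\gs_i$ descend to tensor products over $A$ --- now using the coproduct $\gD$, its counitality, and the left $U$-comodule structure of $M'$ for the extremal cofaces --- and that they obey the cosimplicial identities, the interior cases reducing to coassociativity of $\gD$. For a left Hopf algebroid, the para-cocyclic operator $\tau$ of \rmref{casadilivia} is handled with the same toolbox: \rmref{Sch4} and \rmref{Sch5} rearrange the iterated lower legs $u^1_{-(j)}$, \rmref{Sch2}, \rmref{Sch7}, \rmref{Sch8} treat the boundary cofaces, and \rmref{huhomezone2} enters in the slot carrying $m'$; as in (1), $\tau^{n+1} = \mathrm{id}$ holds precisely when $M'$ is stable. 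I would present the interior identities in detail and, for the two boundary identities and the $(n+1)$-st power, give the iteration explicitly or else refer to \cite{KowKra:CSIACT, KowKra:BVSOEAT}.
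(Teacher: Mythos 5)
Your proposal is correct and follows exactly the route one would expect: the paper itself gives no proof of Proposition \plref{kamille} but recalls it from \cite{KowKra:CSIACT}, where the verification is precisely the case analysis you describe (well-definedness over $\otimes_\Aopp$ resp.\ $\otimes_\ahha$, interior identities from (co)associativity, boundary identities from (co)unitality and \rmref{Sch2}, \rmref{Sch6}--\rmref{Sch8}, and the $(n+1)$-st power of $\ttt$ resp.\ $\tau$ controlled by \rmref{Sch4}, \rmref{Sch5}, the aYD condition \rmref{huhomezone2}, and stability). You have correctly identified the only genuinely delicate step, namely the iteration proving $\ttt^{n+1}=\id$.
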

Recall that in the first case this means that the operators $(\dd_i,\sss_j,\ttt_n)$ satisfy all
the defining relations of a cyclic $k$-module in the sense of Connes 
(see, {\em e.g.}, \cite{Con:NCDG, Lod:CH}), 
except for the one that requires that 
$
\ttt^{n+1} = \id
$ 
on $C_n(U,M)$, 
which, as mentioned,
is only satisfied when $M$ is an SaYD module; analogous comments apply to the cohomology situation. 
The relation between the cyclic and the cocyclic module above as (a sort of) {\em cyclic duals} (as introduced by Connes \cite{Con:CCEFE} as well) is explained in \cite{KowKra:CSIACT}.
Although we shall need the full structure of the (co)cyclic modules, 
we are not going to study the cyclic (co)homology of these
objects, but rather their (co)simplicial (co)homology: 

\begin{definition}
\label{avviso}
For any bialgebroid $U$ and any $M \in \modu$, 
we denote the simplicial 
homology of $C_\bull(U,M)$, that is, the  
homology with respect to the
boundary map 
\begin{equation}
\label{appolloni1}
        \bb := \sum_{i=0}^n (-1)^i \dd_i  
  \end{equation}
by $H_\bull(U,M)$ and
call it the 
\emph{(Hochschild) homology of $U$ with
coefficients in $M$}.  Likewise, if $M' \in \ucomod$ is a left $U$-comodule, we denote the cosimplicial 
cohomology of $C^\bull_{\rm co}(U,M')$, {\em i.e.}, the  
cohomology with respect to the
coboundary map 
\begin{equation}
\label{appolloni2}
        \gb := \sum_{i=0}^{n+1} (-1)^i \gd_i  
  \end{equation}
by $H^\bull_{\rm co}(U,M')$ and
call it the 
\emph{(coring} or \emph{co-Hochschild) cohomology of $U$ with
coefficients in $M'$}.  
\end{definition}

Recall from \cite[Thm.~2.13]{KowPos:TCTOHA} that if $\due U \blact {}$ is projective as a left $A$-module, then 
$$
H_\bull(U,M) \simeq \Tor^U_\bull(M,A),
$$ 
and if $U_\ract$ is flat as a right $A$-module, we have 
\begin{equation}
\label{cotor}
H^\bull_{\rm co}(U,M') \simeq \Cotor^\bull_U(A,M').
\end{equation}

Mostly, we will work on the normalised complex $\bar{C}_\bull(U,M)$
of $C_\bull(U,M)$, meaning the quotient by the subcomplex spanned by the
images of the degeneracy maps of this simplicial 
$k$-module, {\em i.e.}, given by the cokernel of the degeneracy maps. Likewise, the normalised complex $\bar{C}^\bull_{\rm co}(U,M')$ of the cochain complex $C^\bull_{\rm co}(U,M')$ is obtained by dealing with the kernel of the codegeneracy maps.
We shall usually denote operators that descend from the
original complexes to these quotients by the same symbols if no
confusion can arise. 

On every para-cyclic $k$-module, 
one furthermore defines the {\em norm operator}, 
the {\em extra  degeneracy},
and the \emph{cyclic differential} 
\begin{equation}
\label{extra}
        \cN := \sum_{i=0}^n (-1)^{in} \ttt^i, \qquad         
        \sss_{-1} := \ttt \, \sss_n,\qquad
        \BB=(\mathrm{id}-\ttt) \, \sss_{-1} \, \cN,
\end{equation}
respectively.
Remember that $\BB$ coincides on the 
normalised complex 
$\bar C_\bull(U,M)$
with the map (induced by) $\sss_{-1} \, \cN$, so we 
take the liberty to denote the latter by 
$\BB$ as well, as we, in fact, will only consider the induced map on the
normalised complex. 

\subsection{Gerstenhaber algebras, operads with multiplication, and cyclic operads}
\label{responsabilitacivile}
In this section, we finally 
gather some well-known material about Gerstenhaber algebras and their relation to operads, 
as well as Batalin-Vilkovisky algebras and their relation to cyclic operads. 
We refer the interested reader to, for example, \cite{GerSch:ABQGAAD, LodVal:AO, MarShnSta:OIATAP, Men:BVAACCOHA} for more details on operad theory; here, we only need the basic definition in the formulation of Gerstenhaber-Schack \cite{GerSch:ABQGAAD} (termed ``strict unital comp algebra'' therein) plus one important consequence:

\begin{definition}
\label{moleskine}
A (non-$\gS$) {\em operad} in the category of $k$-modules is a sequence $\{O(n)\}_{n \geq 0}$ of $k$-modules with an identity element 
$\mathbb{1} \in O(1)$ together with $k$-bilinear operations $\circ_i: O(p) \otimes O(q) \to O(p+q-1)$
such that
\begin{eqnarray}
\label{danton}
\nonumber
\gvf \circ_i \psi &=& 0 \qquad \qquad \qquad \qquad \qquad \! \mbox{if} \ p < i \quad \mbox{or} \quad p = 0, \\
(\varphi \circ_i \psi) \circ_j \chi &=& 
\begin{cases}
(\varphi \circ_j \chi) \circ_{i+r-1} \psi \qquad \mbox{if} \  \, j < i, \\
\varphi \circ_i (\psi \circ_{j-i +1} \chi) \qquad \hspace*{1pt} \mbox{if} \ \, i \leq j < q + i, \\
(\varphi \circ_{j-q+1} \chi) \circ_{i} \psi \qquad \mbox{if} \ \, j \geq q + i,
\end{cases} \\
\nonumber
\gvf \circ_i \mathbb{1} &=& \mathbb{1} \circ_i \gvf \ \ = \ \ \gvf \! \qquad \quad \qquad \mbox{for} \ i \leq p,   
\end{eqnarray}
is fulfilled for any $\varphi \in O(p), \ \psi \in O(q)$, and $\chi \in O(r)$. 
The operad is called an {\em operad with multiplication} if there exists a {\em distinguished element} or {\em operad multiplication} $\mu \in O(2)$ and an element $e \in O(0)$ such that additionally
\begin{equation}
\label{distinguished, I said 1}
\begin{array}{rclrcl}
\mu \circ_1 \mu &=& \mu \circ_2 \mu, \\
\mu \circ_1 e &=& 
\mu \circ_2 e
\ \ = \ \ \mathbb{1} 
\end{array}
\end{equation}
holds.
\end{definition}

In the rest of this article, the term ``operad'' will always refer to a non-$\gS$ operad in the category of $k$-modules in the above sense.

Gerstenhaber algebra structures can be constructed, for example, 
by means of the notion of an operad with multiplication, as the following theorem shows:

\begin{theorem}[\cite{Ger:TCSOAAR, GerSch:ABQGAAD, McCSmi:ASODHCC}]
\label{customerscopy}
Each operad with multiplication gives rise to a cosimplicial $k$-module the cohomology of which is a Gerstenhaber algebra.
\end{theorem}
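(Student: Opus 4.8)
The statement to be proved --- Theorem~\ref{customerscopy}, that each operad with multiplication gives rise to a cosimplicial $k$-module whose cohomology is a Gerstenhaber algebra --- is the classical Gerstenhaber--Schack / McClure--Smith result, so the proof is really a matter of recalling the standard construction and checking the axioms of Definition~\ref{golfoaranci1}. The plan is to proceed in three stages: first build the cosimplicial $k$-module out of an operad with multiplication $(O,\mu,e)$; second, extract the cup product and the Gerstenhaber bracket on cohomology from the operadic $\circ_i$-operations and the multiplication $\mu$; and third, verify that these satisfy graded commutativity of $\smallsmile$ and the graded Leibniz rule linking $\{\cdot,\cdot\}$ to $\smallsmile$.

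\textbf{Step 1: the cosimplicial $k$-module.} Given an operad with multiplication, set $O^n := O(n)$, with cofaces $\gd_i \gvf := \mu \circ_2 \gvf$ for $i=0$, $\gd_i \gvf := \gvf \circ_i \mu$ for $1 \le i \le n$, and $\gd_{n+1} \gvf := \mu \circ_1 \gvf$, and codegeneracies $\gs_i \gvf := \gvf \circ_{i+1} e$ for $0 \le i \le n-1$. The cosimplicial identities follow from the associativity constraint in \eqref{danton} together with the associativity $\mu \circ_1 \mu = \mu \circ_2 \mu$ and the unit relations $\mu \circ_1 e = \mu \circ_2 e = \mathbb{1}$ in \eqref{distinguished, I said 1}; this is a direct but slightly tedious bookkeeping exercise in the three cases of the associativity axiom. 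The resulting coboundary $\gd = \sum_{i=0}^{n+1} (-1)^i \gd_i$ squares to zero, and we write $H^\bull(O) := H(O^\bull,\gd)$.

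\textbf{Step 2: product and bracket.} On cochains define $\gvf \smallsmile \psi := \mu \circ_1 (\mu \circ_2 \psi) \circ_2 \gvf$ (the operadic version of Gerstenhaber's cup product, placing $\gvf$ and $\psi$ into the two inputs of $\mu$ via iterated insertions), and the Gerstenhaber (brace) composite $\gvf \bar\circ \psi := \sum_{i=1}^p (-1)^{|q|(i-1)} \gvf \circ_i \psi$ for $\gvf \in O^p$, $\psi \in O^q$, whence the bracket $\{\gvf,\psi\} := \gvf \bar\circ \psi - (-1)^{|p||q|} \psi \bar\circ \gvf$. One shows $\gd$ is a derivation of $\smallsmile$ up to the usual sign, so $\smallsmile$ descends to cohomology; that $\smallsmile$ is associative on the nose and graded commutative \emph{only up to a $\gd$-coboundary} (the homotopy being built from $\bar\circ$), so it is genuinely graded commutative on $H^\bull(O)$; and that the pre-Lie relation for $\bar\circ$ holds on cochains, which makes $\{\cdot,\cdot\}$ a graded Lie bracket already at the chain level, hence also on cohomology.

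\textbf{Step 3: the Leibniz rule, and the main obstacle.} The remaining point is the graded Leibniz rule $\{\gamma, \ga \smallsmile \gb\} = \{\gamma,\ga\} \smallsmile \gb + (-1)^{pq} \ga \smallsmile \{\gamma,\gb\}$; as in Gerstenhaber's original argument this does \emph{not} hold on the nose but only up to an explicit coboundary, so one must produce the homotopy and check it is $\gd$-closed on the relevant arguments. This homotopy-bookkeeping --- keeping the signs, the index shifts in the three-case associativity law, and the $|n| = n-1$ desuspension conventions all consistent --- is the part most prone to error and is the main thing to get right; everything else is formal. Since all of this is classical (see \cite{Ger:TCSOAAR, GerSch:ABQGAAD, McCSmi:ASODHCC}), for the write-up I would state the construction explicitly, indicate the key identities, and refer to the cited sources for the sign-chasing, rather than reproduce it in full.
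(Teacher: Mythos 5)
Your proposal is correct and follows exactly the route the paper (and its cited sources) takes: the paper itself does not reprove Theorem~\plref{customerscopy} but merely records the same structure maps you describe --- the cosimplicial structure built from $\mu$ and $e$, the cup product \rmref{cupco}, the composite $\bar\circ$ and bracket \rmref{zugangskarte}, and $\gd\gvf=\{\mu,\gvf\}$ --- and defers the homotopy bookkeeping for graded commutativity and the Leibniz rule to \cite{Ger:TCSOAAR, GerSch:ABQGAAD, McCSmi:ASODHCC}. The only blemish is your displayed cup-product formula, which should read $(\mu\circ_2\psi)\circ_1\gvf=(\mu\circ_1\gvf)\circ_{p+1}\psi$ as your verbal description already indicates.
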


For later use, we give the necessary structure maps that constitute the proof of this theorem:
for any two cochains $\varphi \in O(p),\psi \in O(q)$, define
\begin{equation*}
        \varphi \bar\circ \psi := 
(-1)^{|p||q|}        
\sum^{p}_{i=1}
        (-1)^{|q||i|} \varphi \circ_i \psi \in O({p+q-1}), \qquad |n|:= n- 1,
\end{equation*}
and their \emph{Gerstenhaber bracket} by
\begin{equation}
\label{zugangskarte}
{\{} \varphi,\psi \}
:= \varphi \bar\circ \psi - (-1)^{|p||q|} \psi \bar\circ \varphi,
\end{equation}
whereas the graded commutative product, the {\em cup product}, is given as
\begin{equation}
\label{cupco}
        \varphi \smallsmile \psi = 
        (\mu \circ_1 \varphi) \circ_{p+1} \psi = 
        (\mu \circ_2 \psi) \circ_1 \varphi \in O(p+q).
\end{equation}
Finally, the cohomology mentioned in Theorem \ref{customerscopy} 
is defined with respect to the differential
\begin{equation*}
\label{erfurt}
        \delta \varphi = \{\mu,  \varphi\}.
\end{equation*}

We will frequently use Theorem \ref{customerscopy} in the next section. A sharpened version of this result is an analogous 
relation between Batalin-Vilkovisky algebras and {\em cyclic} operads established in \cite{Men:BVAACCOHA}, as we will recall below. 
The notion of cyclic operad goes back to \cite{GetKap:COACH}, see also \cite[p.~247--248]{MarShnSta:OIATAP}; 
the version we use here is due to \cite{Men:BVAACCOHA}:

\begin{definition}
A {\em cyclic} operad is a (non-$\gS$) operad $O$ equipped with $k$-linear maps $\tau_n: O(n) \to O(n)$ subject to
\begin{equation}
\label{superfluorescent1}
\begin{array}{rcll}
\tau_{|p+q|}(\gvf \circ_1 \psi) &=& \tau_q\psi \circ_q \tau_p \gvf, & \mbox{if} \ 1 \leq p, q, \\
\tau_{|p+q|}(\gvf \circ_i \psi) &=& \tau_p\gvf \circ_{i-1} \psi, & \mbox{if}  \ 0 \leq q \ \mbox{and} \  2 \leq i \leq p, \\
\tau_n^{n+1} &=& \id_{O(n)}, & \\
\tau_1 \mathbb{1} &=& \mathbb{1} &
\end{array}
\end{equation}
for every $\gvf \in O(p)$ and $\psi \in O(q)$. A {\em cyclic operad with multiplication} is simultaneously a cyclic operad and an operad with multiplication $\mu$ such that
\begin{equation}
\label{superfluorescent2}
\tau_2 \mu = \mu.
\end{equation}
\end{definition}

A crucial observation is now that Batalin-Vilkovisky algebras arise, for example, from cyclic operads with multiplication:

\begin{theorem}[\cite{Men:BVAACCOHA}]
\label{holl}
Each cyclic operad with multiplication 
gives rise to a cocyclic module of which the associated cyclic differential $\BB$ yields a generator for the Gerstenhaber bracket on the cohomology of the underlying cosimplicial $k$-module, turning it therefore into a Batalin-Vilkovisky algebra. 
\end{theorem}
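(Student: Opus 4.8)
The plan is to follow Menichi \cite{Men:BVAACCOHA}, organising the argument into three parts: produce the cocyclic $k$-module, extract Connes' operator $B$ from it and check that it descends to cohomology as a square-zero operator of degree $-1$, and finally show that $B$ generates the Gerstenhaber bracket. For the first part, recall that by Theorem~\ref{customerscopy} the data $(O,\mu,e)$ of an operad with multiplication already yield a cosimplicial $k$-module --- with $O(n)$ in cosimplicial degree $n$, cofaces $\gd_0\gvf = \mu\circ_2\gvf$, $\gd_i\gvf = \gvf\circ_i\mu$ for $1\le i\le n$, $\gd_{n+1}\gvf = \mu\circ_1\gvf$, codegeneracies $\gs_i\gvf = \gvf\circ_{i+1}e$, and coboundary $\gd = \sum_i(-1)^i\gd_i = \{\mu,\cdot\}$ --- whose cohomology is the Gerstenhaber algebra $\big(H^\bull(O),\smallsmile,\{\cdot,\cdot\}\big)$ of \rmref{cupco}--\rmref{zugangskarte}. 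The point is to verify that adjoining the maps $\tau_n$ upgrades this to a \emph{cocyclic} $k$-module: the inner compatibilities $\tau_n\gd_i = \gd_{i-1}\tau_{n-1}$ and $\tau_n\gs_i = \gs_{i-1}\tau_{n+1}$, the boundary identities for $\gd_0$, $\gd_{n+1}$ and $\gs_0$, and $\tau_n^{n+1} = \id$. Each follows by writing the coface (resp.\ codegeneracy) as a $\circ_i$-composition with $\mu$ (resp.\ $e$) and applying the two $\tau$-composition rules in \rmref{superfluorescent1}; the boundary cases additionally use $\tau_2\mu = \mu$ from \rmref{superfluorescent2} together with $\mu\circ_i e = \mathbb{1}$, while $\tau_n^{n+1} = \id$ is itself one of the axioms.

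Secondly, on any cocyclic $k$-module one has the norm $\cN = \sum_{i=0}^n (-1)^{ni}\tau^i$, an extra codegeneracy $\gs_{-1}$ dual to the $\sss_{-1}$ of \rmref{extra}, and Connes' operator $B = (\id - \tau)\,\gs_{-1}\,\cN$, which on the normalised complex $\bar C^\bull(O)$ reduces to $\gs_{-1}\cN$. The standard (co)cyclic identities then give $B^2 = 0$ and $\gd B + B\gd = 0$, so $B$ has degree $-1$ and induces a square-zero operator on $H^\bull(O)$. Together with the first part and Theorem~\ref{customerscopy}, this provides all the data of Definition~\ref{golfoaranci1}(ii) except the generating relation.

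Thirdly --- and this is the crux --- one must prove
$$\{\gvf,\psi\} = (-1)^p\big(B(\gvf\smallsmile\psi) - B\gvf\smallsmile\psi - (-1)^p\,\gvf\smallsmile B\psi\big), \qquad \gvf\in O(p),\ \psi\in O(q),$$
on cohomology. I would establish it already on $\bar C^\bull(O)$, up to exact terms: construct an explicit operator $\cH\colon O(p)\otimes O(q)\to O(p+q-1)$ of degree $-1$ --- a signed sum over the admissible cyclically rotated ways of inserting $\mu$ and $e$ into the slots of $\gvf$ and $\psi$, the signs being those prescribed by the norm $\cN$ and the Koszul rule --- and then verify a pre-Batalin--Vilkovisky identity
$$(-1)^p\{\gvf,\psi\} - B(\gvf\smallsmile\psi) + B\gvf\smallsmile\psi + (-1)^p\,\gvf\smallsmile B\psi = \gd\,\cH(\gvf\otimes\psi) + \cH\big(\gd\gvf\otimes\psi + (-1)^p\gvf\otimes\gd\psi\big)$$
on the normalised complex. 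Expanding both sides via operad associativity \rmref{danton}, the relations $\mu\circ_1\mu = \mu\circ_2\mu$ and $\mu\circ_i e = \mathbb{1}$, and the $\tau$-compatibilities \rmref{superfluorescent1}--\rmref{superfluorescent2}, one matches each term on the left with a term of $\gd\cH + \cH\gd$; on cocycles ($\gd\gvf = \gd\psi = 0$) the right-hand side collapses to the coboundary $\gd\,\cH(\gvf\otimes\psi)$, hence vanishes in $H^\bull(O)$, which is the generating relation.

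The hard part is this last step --- not the framework, but the bookkeeping of signs and indices inside $\cH$. The cyclic-operad axiom $\tau_{|p+q|}(\gvf\circ_1\psi) = \tau_q\psi\circ_q\tau_p\gvf$ swaps and reindexes the two arguments, so iterating it while keeping the weights $(-1)^{ni}$ of $\cN$ consistent is exactly where sign errors arise; one must also use the particular representative \rmref{cupco} of the cup product (for which the homotopy formula closes) and ensure the leftover terms genuinely assemble into $\gd(\cdot) + (\cdot)\gd$ rather than into a mere cocycle. Once $\cH$ is correctly normalised the verification is mechanical, and the assertion is Menichi's theorem \cite{Men:BVAACCOHA}.
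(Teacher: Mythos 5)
The paper does not prove this statement at all: Theorem \ref{holl} is imported verbatim from Menichi \cite{Men:BVAACCOHA} as a black box (just as Theorem \ref{customerscopy} is imported from \cite{Ger:TCSOAAR, GerSch:ABQGAAD, McCSmi:ASODHCC}), so there is no in-paper argument to compare yours against. Judged on its own terms, your outline reproduces the standard architecture of Menichi's proof correctly: parts one and two are sound. In particular, the verification that $(\gd_0,\ldots,\gd_{n+1},\gs_i,\tau_n)$ form a cocyclic module from the axioms \rmref{superfluorescent1}--\rmref{superfluorescent2} goes through exactly as you indicate (e.g.\ $\tau(\gvf\circ_1\mu)=\tau_2\mu\circ_2\tau\gvf=\gd_0\tau\gvf$ and $\tau(\mu\circ_2\gvf)=\tau_2\mu\circ_1\gvf=\gd_{n+1}\gvf$), and the degree $-1$ square-zero operator $\BB$ on cohomology is standard cocyclic yoga.

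The genuine gap is in your third part. The entire content of Menichi's theorem is the explicit chain homotopy witnessing the generating relation, and you do not construct it: the operator $\cH$ is described only as ``a signed sum over the admissible cyclically rotated ways of inserting $\mu$ and $e$,'' with no formula, no index ranges, and no signs, and the verification is deferred as ``mechanical.'' This is not a routine omission. The difficulty you yourself flag --- that $\tau(\gvf\circ_1\psi)=\tau\psi\circ_q\tau\gvf$ \emph{swaps} the arguments, so that iterating $\tau$ inside $\cN$ interleaves $\gvf$- and $\psi$-terms in a way that must be matched against the asymmetric expression $\gvf\bar\circ\psi-(-1)^{|p||q|}\psi\bar\circ\gvf$ --- is precisely where a wrongly normalised $\cH$ fails to close, and it is also where one discovers that the identity holds only on cohomology (on cocycles, modulo coboundaries) rather than on the nose on cochains. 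In Menichi's argument this step is carried by explicit formulas and a sequence of lemmas relating $\BB$, the brace-type operations $\bar\circ$, and $\tau$ on the normalised complex; until you write down $\cH$ and actually check the homotopy identity term by term, the proof is a correct plan rather than a proof. (A secondary, fixable imprecision: you should specify the extra codegeneracy $\gs_{-1}$ for the \emph{cocyclic} setting explicitly rather than by analogy with \rmref{extra}, since the sign conventions in $\BB=\gs_{-1}\cN$ on the normalised complex feed directly into the generating relation you are trying to prove.)
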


\section{Gerstenhaber and Batalin-Vilkovisky algebra structures for bialgebroids}
\label{caprarola}
In this section, we will construct two operad structures with multiplication 
on two different cosimplicial modules attached to a left bialgebroid $U$ that compute, 
under suitable projectivity assumptions, the derived functors 
$\Ext_U$ and $\Cotor_U$, see below. The first one is the $\Aop$-linear dual to $C_\bull(U,N)$ from Proposition \ref{kamille}, whereas the second one is its cyclic dual, {\em i.e.}, the cosimplicial module $C^\bull_{\rm co}(U,N)$ introduced in the second part of the same proposition.

\subsection{$C^\bull(U,N)$ as an operad with multiplication}
\label{palermo}
As for the first one mentioned, define 
$$
C^p(U,N) := \Hom_\Aopp(U^{\otimes_\Aopp p}_\ract, N),   \qquad N \in \moda,
$$
which is, by duality, a cosimplicial module. 
The differential $\gd: C^\bull(U,N) \to C^{\bull+1}(U,N)$ is given by
\begin{small}
\begin{equation}
\label{spaetkauf}
\begin{split}
        \gd\varphi(u^1, \ldots, u^{p+1}) 
&:= u^1 \varphi(u^2, \ldots, u^{p+1}) 
+ \sum^{p}_{i=1} (-1)^i 
        \varphi(u^1, \ldots, u^i u^{i+1}, \ldots, u^{p+1}) \\
& \qquad + (-1)^{p+1} 
        \varphi(u^1, \ldots, \varepsilon(u^{p+1}) \blact u^p).
\end{split}
\end{equation}
\end{small}

We denote the cohomology of $C^\bull(U,N)$ 
by $H^\bull(U,N)$ and call this the \emph{(Hochschild) cohomology of $U$ with
  coefficients in $N$}. If $U_\ract$ is an $\Aop$-projective left
bialgebroid, then $H^\bull(U,N) \simeq \mathrm{Ext}^\bull_U(A,N)$, 
but in general we use the symbol $H^\bull(U,N)$ for the cohomology of the explicit cochain complex 
$\big(C^\bull(U,N), \gd\big)$. The {\em normalised complex} 
$\bar C^\bull(U,N)$ is given by the intersection of the kernels of the codegeneracies in the cosimplicial $k$-module $C^\bull(U,N)$.

The Gerstenhaber structure on $H^\bull(U,N)$ for the case $N:=A$ was already discussed in \cite[\S\S3.5--3.6]{KowKra:BVSOEAT}, 
we insert here general coefficients:
let $N$ be a left $U$-module (with action denoted by juxtaposition) which is 
simultaneously a left $U$-comodule with coaction $\gD_\enne: n \mapsto n_{(-1)} \otimes_\ahha n_{(0)}$ 
such that the underlying induced left $\Ae$-module structures coincide. Furthermore, 
assume that with respect to this left $\Ae$-module structure $N$ is an $A$-ring, {\em i.e.}, a monoid in $\amoda$ 
with multiplication denoted by $(n,n') \mapsto n \cdot_\enne n'$ in what follows. 
Observe that by these requirements in particular Eq.~\rmref{normalny} holds.
We then associate to any $p$-cochain $\varphi \in  C^p(U,N)$ the operator
\begin{equation}
\begin{array}{rcl}
\label{huetor1}
        \DD_\varphi^\enne: 
        U^{\otimes_\Aopp p} &\to& U \otimes_\ahha N, \\
        (u^1, \ldots, u^p) &\mapsto& 
        \varphi(u^1_{(1)}, \ldots, u^p_{(1)})_{(-1)} u^1_{(2)} \cdots u^p_{(2)} \otimes_\ahha \varphi(u^1_{(1)}, \ldots, u^p_{(1)})_{(0)},
\end{array}
\end{equation}
and this map is well-defined by \rmref{maotsetung} 
along with \rmref{tellmemore}.
For zero cochains, {\em i.e.}, elements in $N$, this map is given by the coaction $\gD_\enne$ of $N$. We introduce the notation
$$
\DD_\varphi(u^1, \ldots, u^p)_{(-1)} \otimes_\ahha \DD_\varphi(u^1, \ldots, u^p)_{(0)}
:= \DD_\varphi^\enne(u^1, \ldots, u^p).
$$
This enables us to define
$$
        \circ_i : C^p(U,N) \otimes C^q(U,N) \rightarrow 
        C^{|p+q|}(U,N),\quad
        i = 1, \ldots, p,
$$
by
\begin{equation}
\label{maxdudler}
\begin{split}
        &\ (\varphi \circ_i \psi)(u^1, \ldots, u^{|p+q|}) 
\\ &\ \quad 
:= \varphi(u^1_{(1)}, \ldots, u^{i-1}_{(1)}, 
        \DD_\psi(u^{i}, \ldots, u^{i+q-1})_{(-1)}, 
        u^{i+q}, \ldots, u^{|p+q|}) \\
&\hspace*{4cm} \cdot_\enne \big(u^1_{(2)} \cdots u^{i-1}_{(2)} \DD_\psi(u^{i}, \ldots, u^{i+q-1})_{(0)}\big), 
\end{split}
\end{equation}
which again is well-defined by \rmref{tellmemore}, \rmref{maotsetung}, but also \rmref{normalny}.
For zero cochains $n \in N$, we define $n \circ_i \psi = 0$ for all $i$ and
all $\psi$, whereas 
$$
\gvf \circ_i n :=  \varphi(u^1_{(1)}, \ldots, u^{i-1}_{(1)}, 
        n_{(-1)},u^{i}, \ldots, u^{p-1}) \cdot_\enne (u^1_{(2)} \cdots u^{i-1}_{(2)} n_{(0)}) \in C^{p-1}(U,N).
$$
 The {distinguished element}, {\em i.e.}, the {operad multiplication} \rmref{distinguished, I said 1} is here given by
\begin{equation}
\label{distinguished, I said 2}
        \mu := (\varepsilon \, m_\uhhu(\cdot,\cdot)) \lact 1_\enne \in C^2(U,N),
\end{equation}
where $m_\uhhu$ is the multiplication map of $U$. Furthermore, define
\begin{equation}
\label{habemuspapam1}
\mathbb{1} := \gve(\cdot) \lact 1_\enne \in C^1(U,N) \quad \mbox{and} \quad  
e := 1_\enne \in C^0(U,N).
\end{equation} 

The cup product \rmref{cupco} can then be expressed as
$$
(\gvf \smallsmile \psi)(u^1, \ldots, u^{p+q}) = \gvf(u^1_{(1)}, \ldots, u^{p}_{(1)}) \cdot_\enne \big(u^1_{(2)} \cdots u^{p}_{(2)} 
\psi(u^{p+1}, \ldots, u^{p+q})\big),
$$
where $\gvf \in C^p(U,N)$,  $\psi \in C^q(U,N)$, 
as a short computation reveals by means of \rmref{maxdudler} and \rmref{distinguished, I said 2} with the help of \rmref{zilvesta}, \rmref{tellmemore}, and \rmref{maotsetung}.

\begin{theorem}
\label{clairefontaine1a}
If $N$ is a braided commutative Yetter-Drinfel'd algebra over a left bialgebroid $U$,
then $C^\bull(U,N)$ with the structure given in \rmref{maxdudler}--\rmref{habemuspapam1} 
defines an operad with multiplication.
\end{theorem}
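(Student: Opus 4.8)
The plan is to verify directly that the operations $\circ_i$ defined in \rmref{maxdudler}, together with the unit $\mathbb 1$ of \rmref{habemuspapam1} and the multiplication $\mu$ of \rmref{distinguished, I said 2}, satisfy all the axioms listed in Definition \ref{moleskine}: the vanishing condition, the associativity (parallel and nested) identities, the unit identities, and the relations \rmref{distinguished, I said 1} for $\mu$ and $e$. The only conceptual input beyond bookkeeping is that the Yetter-Drinfel'd compatibility \rmref{huhomezone1a} (equivalently \rmref{huhomezone1b}), together with the module-algebra axiom \rmref{zilvesta}, the comodule-algebra axiom \rmref{corsasemplice}, the Takeuchi condition \rmref{tellmemore}/\rmref{maotsetung}, and \emph{crucially} braided commutativity \rmref{circolodegliartisti} via \rmref{normalny}, make all the requisite moves legal.

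First I would record the key structural fact about the auxiliary operator $\DD^\enne_\varphi$ of \rmref{huetor1}: because $N$ is a Yetter-Drinfel'd algebra, $\DD^\enne_\varphi$ is itself a kind of ``twisted coaction'' and satisfies a coassociativity-type identity, namely that inserting $\DD_\psi$ into $\DD_\varphi$ and then coacting agrees with first coacting and then pushing the $U$-factors through — this is exactly what \rmref{huhomezone1a} buys, and it is the engine that drives the nested associativity case $i \le j < q+i$ of \eqref{danton}. Concretely, I expect to prove a lemma of the form
$$
\DD_{\varphi \circ_i \psi}(u^1,\ldots,u^{|p+q|}) = \DD_\varphi\big(u^1_{(1)},\ldots,\DD_\psi(\cdots)_{(-1)},\ldots\big)_{(-1)} \cdots \otimes_\ahha \DD_\varphi(\cdots)_{(0)} \cdot_\enne (\cdots),
$$
and then the three associativity cases reduce, after relabelling Sweedler indices, to manipulations that are formally the same as in Gerstenhaber's original argument for the Hochschild complex, with $A$-linearity bookkeeping handled by \rmref{tellmemore} and \rmref{maotsetung}. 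The disjoint cases $j<i$ and $j \ge q+i$ are comparatively mechanical: the two insertions happen in non-overlapping slots, so one only needs that the $U_{(2)}$-strings multiply associatively in $U$ and act compatibly on $N$ via \rmref{zilvesta}, plus \rmref{normalny} to slide a ``$\ract a$'' across the $\cdot_\enne$ when an insertion point is shifted.

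The unit axioms $\varphi \circ_i \mathbb 1 = \mathbb 1 \circ_i \varphi = \varphi$ follow because $\DD^\enne_{\mathbb 1}(u) = u_{(1)} \otimes_\ahha \gve(u_{(2)}) \lact 1_\enne$ collapses via counitality to $u \otimes_\ahha 1_\enne$; substituting into \rmref{maxdudler} and using the counit axiom for the coalgebra structure \rmref{cerveza} and the unitality of $\cdot_\enne$ gives $\varphi$ back. For the operad-multiplication relations \rmref{distinguished, I said 1}: $\mu \circ_1 \mu = \mu \circ_2 \mu$ is associativity of $m_\uhhu$ combined with counitality (both sides evaluate $(u^1,u^2,u^3)\mapsto \gve(u^1u^2u^3)\lact 1_\enne$), while $\mu \circ_1 e = \mu \circ_2 e = \mathbb 1$ is immediate from the definitions of $e=1_\enne$ and $\mathbb 1=\gve(\cdot)\lact 1_\enne$ once one notes $e\circ_i\psi=0$ forces only the $i$ within range to contribute. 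I would also double-check the vanishing clause $\varphi\circ_i\psi = 0$ for $p<i$ or $p=0$, which is built into the definition together with the convention $n\circ_i\psi=0$ for zero-cochains.

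The main obstacle I anticipate is the nested associativity identity when the \emph{outer} insertion slot of $\varphi$ sits to the left of where $\psi$ gets inserted into $\chi$: there the $U_{(2)}$-strings of $\varphi$ must be threaded \emph{past} the insertion of $\DD_\chi(\cdots)_{(-1)}$, and to match the two sides one is forced to move the coaction legs around using braided commutativity \rmref{circolodegliartisti}. Getting the signs and the Sweedler-index alignment right here — in particular checking that the ``$a$-linearity'' ambiguities in the tensor products over $\Aop$ and over $A$ are genuinely killed at each step by \rmref{tellmemore}, \rmref{maotsetung}, and \rmref{normalny}, rather than merely cosmetically — is the delicate part. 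Everything else is a careful but routine transcription of the classical proof of Theorem \ref{customerscopy} into the bialgebroid-with-coefficients setting; once the $\DD^\enne$-lemma above is in hand, the rest of Definition \ref{moleskine} is verified term by term.
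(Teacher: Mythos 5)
Your overall strategy --- a direct verification of the axioms of Definition \ref{moleskine}, isolating which hypotheses on $N$ legitimise each move --- is exactly the paper's, and your proposed coassociativity-type lemma for $\DD^\enne_{\gvf\circ_i\psi}$ is essentially what the paper establishes inline. However, you have attributed the use of braided commutativity to the wrong associativity case, and as a consequence your plan for the disjoint identities would fail as stated. Braided commutativity \rmref{circolodegliartisti} enters precisely in the case $j \geq q+i$ (and its mirror $j<i$), which you dismiss as ``comparatively mechanical'': there, expanding $(\gvf\circ_i\psi)\circ_j\chi$ produces the $N$-factor coming from $\DD_\psi(\cdots)_{(0)}$ multiplied (in $N$) \emph{before} the one coming from $\DD_\chi(\cdots)_{(0)}$, whereas the target $(\gvf\circ_{j-q+1}\chi)\circ_i\psi$ produces them in the opposite order. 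No amount of associativity of the $U_{(2)}$-strings, of \rmref{zilvesta}, or of \rmref{normalny} will swap two elements of $N$ past one another --- only the braiding relation $n\cdot_\enne n' = (n_{(-1)}n')\cdot_\enne n_{(0)}$ does that, and it is the sole reason braided commutativity (rather than mere Yetter-Drinfel'd algebra structure) is hypothesised in the theorem.

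Conversely, the nested case $i\leq j<q+i$, which you single out as the place where \rmref{circolodegliartisti} is ``crucially'' needed, does not use braided commutativity at all: the insertions are genuinely nested, the two $N$-outputs already occur in the same order on both sides, and what is required is exactly your $\DD^\enne$-lemma, proved from the Yetter-Drinfel'd compatibility \rmref{huhomezone1a} (used in the form \rmref{huhomezone1b} together with \rmref{Sch3}) and the comodule-algebra multiplicativity \rmref{corsasemplice}. So every ingredient you list is present and needed somewhere, but with the division of labour you describe the $j\geq q+i$ case will not close up; move \rmref{circolodegliartisti} there and drop it from the nested case, and the rest of your plan goes through as the paper's proof does.
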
 

\begin{rem}
As already mentioned, one can in particular take $N:=A$, the base algebra of the left bialgebroid itself, and in this case the operators \rmref{spaetkauf}--\rmref{habemuspapam1} coincide, by means of the canonical left action \rmref{stabatmater1} resp.\ coaction \rmref{stabatmater2}, with the operators given in \cite[\S3.5]{KowKra:BVSOEAT}.
On the other hand, observe that even for general coefficients $N$ one has for the differential \rmref{spaetkauf} the usual expression
$$
\gd \gvf = \{\mu, \gvf\},
$$ 
where the right hand side is defined as in \rmref{zugangskarte}. This follows from \rmref{tellmemore} and \rmref{maotsetung}.
\end{rem}

\begin{proof}[Proof of Theorem \ref{clairefontaine1a}]
Verifying the conditions in Definition \ref{moleskine} is essentially a direct computation, 
but we want to show at least 
two of the identities in \rmref{danton} in detail to illustrate where the various assumptions on $N$ in Definition \ref{fiocchi} of a braided commutative 
Yetter-Drinfel'd algebra appear; 
apart from that, 
the identities \rmref{distinguished, I said 1} are easy to check considering that for the distinguished element \rmref{distinguished, I said 2}
$$
D^\enne_\mu(u, v) = uv \otimes_\ahha 1_\enne, \quad \mbox{for all} \ u,v \in U,
$$ 
holds in case $N$ is a comodule algebra. 
Somewhat less obvious is to check that the identities \rmref{danton} are fulfilled: 
let $\gvf \in C^p(U,N)$, $\psi \in C^q(U,N)$, and $\chi \in C^r(U,N)$, along with $i \leq j < q + i$. Then
\begin{footnotesize}
\begin{equation*}
\begin{split}
(&(\gvf \circ_i \psi) \circ_j \chi)(u^1, \ldots, u^{||p+q|+r|}) \\
&=\big((\gvf \circ_i \psi)(u^1_{(1)}, \ldots, u^{j-1}_{(1)}, 
        \DD_\chi(u^{j}, \ldots, u^{j+r-1})_{(-1)}, 
        u^{j+r}, \ldots, u^{||p+q|+r|})\big) \\
&\hspace*{4cm} \cdot_\enne \big(u^1_{(2)} \cdots u^{j-1}_{(2)} \DD_\chi(u^{j}, \ldots, u^{j+r-1})_{(0)}\big) \\
&=\gvf\Big(u^1_{(1)}, \ldots, u^{i-1}_{(1)}, \DD_\psi(u^i_{(1)}, \ldots, u^{j-1}_{(1)}, \DD_\chi(u^{j}, \ldots, u^{j+r-1})_{(-1)}, u^{j+r}, \\
& \qquad\qquad\qquad \ldots, u^{i+q+r-2})_{(-1)}, u^{i+q+r-1},  \ldots, u^{||p+q|+r|}\Big) \\
& \qquad 
\cdot_\enne \Big(u^1_{(2)} \cdots u^{i-1}_{(2)} \DD_\psi(u^i_{(1)}, \ldots, u^{j-1}_{(1)}, 
        \DD_\chi(u^{j}, \ldots, u^{j+r-1})_{(-1)}, u^{j+r}, \ldots, u^{i+q+r-2})_{(0)}\Big) \\
& \qquad \cdot_\enne \Big(u^1_{(3)} \cdots u^{i-1}_{(3)}u^i_{(2)} \cdots u^{j-1}_{(2)} \DD_\chi(u^{j}, \ldots, u^{j+r-1})_{(0)}\Big) \\
&\overset{\scriptscriptstyle{\rmref{zilvesta}}}{=}
\gvf\Big(u^1_{(1)}, \ldots, u^{i-1}_{(1)}, \DD_\psi(u^i_{(1)}, \ldots, u^{j-1}_{(1)}, \DD_\chi(u^{j}, \ldots, u^{j+r-1})_{(-1)}, u^{j+r}, \ldots, u^{i+q+r-2})_{(-1)}, \\
& \qquad \qquad u^{i+q+r-1}, \ldots, u^{||p+q|+r|}\Big) 
\cdot_\enne \Big(u^1_{(2)} \cdots u^{i-1}_{(2)}
\big[\DD_\psi(u^i_{(1)}, \ldots, u^{j-1}_{(1)}, \DD_\chi(u^{j}, \ldots, u^{j+r-1})_{(-1)}, \\
& \qquad \qquad u^{j+r}, \ldots, u^{i+q+r-2})_{(0)} \cdot_\enne \big(u^i_{(2)} \cdots u^{j-1}_{(2)} \DD_\chi(u^{j}, \ldots, u^{j+r-1})_{(0)}\big)\big]\Big) \\
&\overset{\scriptscriptstyle{\rmref{huetor1}}}{=}
\gvf\Big(u^1_{(1)}, \ldots, u^{i-1}_{(1)}, \psi(u^i_{(1)}, \ldots, u^{j-1}_{(1)}, \DD_\chi(u^{j}_{(1)}, \ldots, u^{j+r-1}_{(1)})_{(-2)}, u^{j+r}_{(1)}, \ldots, u^{i+q+r-2}_{(1)})_{(-1)} \\
& \qquad \qquad u^i_{(2)} \cdots u^{j-1}_{(2)} \DD_\chi(u^{j}_{(1)}, \ldots, u^{j+r-1}_{(1)})_{(-1)} u^{j}_{(2)} \cdots u^{i+q+r-2}_{(2)}, \\
& \qquad \qquad u^{i+q+r-1}, \ldots, u^{||p+q|+r|}\Big) 
\cdot_\enne \Big(u^1_{(2)} \cdots u^{i-1}_{(2)}
\big[\psi(u^i_{(1)}, \ldots, u^{j-1}_{(1)}, \DD_\chi(u^{j}_{(1)}, \ldots, u^{j+r-1}_{(1)})_{(-2)}, \\
& \qquad \qquad u^{j+r}_{(1)}, \ldots, u^{i+q+r-2}_{(1)})_{(0)} \cdot_\enne \big(u^i_{(2)} \cdots u^{j-1}_{(2)} \DD_\chi(u^{j}_{(1)}, \ldots, u^{j+r-1}_{(1)})_{(0)}\big)\big]\Big) 
\end{split}
\end{equation*}
\begin{equation*}
\begin{split}
&\overset{\scriptscriptstyle{\rmref{huhomezone1b}, \rmref{Sch3}}}{=}
\gvf\Big(u^1_{(1)}, \ldots, u^{i-1}_{(1)}, \psi(u^i_{(1)}, \ldots, u^{j-1}_{(1)}, \DD_\chi(u^{j}_{(1)}, \ldots, u^{j+r-1}_{(1)})_{(-1)}, u^{j+r}_{(1)}, \ldots, u^{i+q+r-2}_{(1)})_{(-1)} \\
& \qquad \qquad \big(u^i_{(2)} \cdots u^{j-1}_{(2)} \DD_\chi(u^{j}_{(1)}, \ldots, u^{j+r-1}_{(1)})_{(0)}\big)_{(-1)} u^{i}_{(3)} \cdots u^{j-1}_{(3)}u^j_{(2)} \cdots u^{i+q+r-2}_{(2)}, \\
& \qquad \qquad u^{i+q+r-1}, \ldots, u^{||p+q|+r|}\Big) 
\cdot_\enne \Big(u^1_{(2)} \cdots u^{i-1}_{(2)}
\big[\psi(u^i_{(1)}, \ldots, u^{j-1}_{(1)}, \DD_\chi(u^{j}_{(1)}, \ldots, u^{j+r-1}_{(1)})_{(-1)}, \\
& \qquad \qquad u^{j+r}_{(1)}, \ldots, u^{i+q+r-2}_{(1)})_{(0)} \cdot_\enne \big(u^i_{(2)} \cdots u^{j-1}_{(2)} \DD_\chi(u^{j}_{(1)}, \ldots, u^{j+r-1}_{(1)})_{(0)}\big)_{(0)}\big]\Big) 
\\
&\overset{\scriptscriptstyle{\rmref{corsasemplice}}}{=} 
\gvf\Big(u^1_{(1)}, \ldots, u^{i-1}_{(1)}, \Big[\psi(u^i_{(1)}, \ldots, u^{j-1}_{(1)}, \DD_\chi(u^{j}_{(1)}, \ldots, u^{j+r-1}_{(1)})_{(-1)}, u^{j+r}_{(1)}, \ldots, u^{i+q+r-2}_{(1)}) \\
& \qquad \qquad \cdot_\enne \big(u^i_{(2)} \cdots u^{j-1}_{(2)} \DD_\chi(u^{j}_{(1)}, \ldots, u^{j+r-1}_{(1)})_{(0)}\big)\Big]_{(-1)} u^{i}_{(3)} \cdots u^{j-1}_{(3)}u^j_{(2)} \cdots u^{i+q+r-2}_{(2)}, \\
& \qquad \qquad u^{i+q+r-1}, \ldots, u^{||p+q|+r|}\Big) 
\cdot_\enne \Big(u^1_{(2)} \cdots u^{i-1}_{(2)}
\Big[\psi(u^i_{(1)}, \ldots, u^{j-1}_{(1)}, \DD_\chi(u^{j}_{(1)}, \ldots, u^{j+r-1}_{(1)})_{(-1)}, \\
& \qquad \qquad u^{j+r}_{(1)}, \ldots, u^{i+q+r-2}_{(1)}) \cdot_\enne \big(u^i_{(2)} \cdots u^{j-1}_{(2)} \DD_\chi(u^{j}_{(1)}, \ldots, u^{j+r-1}_{(1)})_{(0)}\big)\Big]_{(0)}\Big) 
\\
&= \gvf\big(u^1_{(1)}, \ldots, u^{i-1}_{(1)}, 
        (\psi \circ_{j-i+1} \chi)(u^{i}_{(1)}, \ldots, u^{i+r+q-2}_{(1)})_{(-1)}u^{i}_{(2)} \cdots u^{i+r+q-2}_{(2)}, \\ 
&\qquad\qquad        
u^{i+r+q-1}, \ldots, u^{||p+q|+r|}\big) \cdot_\enne \big(u^1_{(2)} \cdots u^{i-1}_{(2)} (\psi \circ_{j-i+1} \chi)(u^{i}_{(1)}, \ldots, u^{i+r+q-2}_{(1)})_{(0)}  
\\
&= \gvf\big(u^1_{(1)}, \ldots, u^{i-1}_{(1)}, 
        \DD_{\psi \circ_{j-i+1} \chi}(u^{i}, \ldots, u^{i+r+q-2})_{(-1)}, 
        u^{i+r+q-1}, \ldots, u^{||p+q|+r|}) \\
&\hspace*{4cm} \cdot_\enne \big(u^1_{(2)} \cdots u^{i-1}_{(2)} \DD_{\psi \circ_{j-i+1} \chi}(u^{i}, \ldots, u^{i+r+q-2})_{(0)}\big)  \\
&= (\gvf \circ_i (\psi \circ_{j-i+1} \chi))(u^1, \ldots, u^{||p+q|+r|})
\end{split}
\end{equation*}
\end{footnotesize}
holds. 
The condition \rmref{circolodegliartisti} appears, {\em e.g.}, when computing for $j \geq q+i$ that  
\begin{footnotesize}
\begin{equation*}
\begin{split}
(&(\gvf \circ_i \psi) \circ_j \chi)(u^1, \ldots, u^{||p+q|+r|}) \\
&\ \ =\big((\gvf \circ_i \psi)(u^1_{(1)}, \ldots, u^{j-1}_{(1)}, 
        \DD_\chi(u^{j}, \ldots, u^{j+r-1})_{(-1)}, 
        u^{j+r}, \ldots, u^{||p+q|+r|})\big) \\
&\hspace*{4cm} \cdot_\enne \big(u^1_{(2)} \cdots u^{j-1}_{(2)} \DD_\chi(u^{j}, \ldots, u^{j+r-1})_{(0)}\big) \\
&\ \ =\gvf\big(u^1_{(1)}, \ldots, u^{i-1}_{(1)}, \DD_\psi(u^i_{(1)}, \ldots, u^{i+q-1}_{(1)})_{(-1)}, u^{i+q}_{(1)}, \ldots, u^{j-1}_{(1)}, \DD_\chi(u^{j}, \ldots, u^{j+r-1})_{(-1)}, \\
& \qquad \qquad
u^{j+r}, \ldots, u^{||p+q|+r|}\big) 
\cdot_\enne \big(u^1_{(2)} \cdots u^{i-1}_{(2)} \DD_\psi(u^i_{(1)}, \ldots, u^{i+q-1}_{(1)})_{(0)}\big) \\
& \qquad \qquad 
\cdot_\enne \big(u^1_{(3)} \cdots u^{i-1}_{(3)}u^i_{(2)} \cdots u^{j-1}_{(2)}
        \DD_\chi(u^{j}, \ldots, u^{j+r-1})_{(0)}\big) \\
&\overset{\scriptscriptstyle{\rmref{zilvesta}}}{=}
\gvf\big(u^1_{(1)}, \ldots, u^{i-1}_{(1)}, \DD_\psi(u^i_{(1)}, \ldots, u^{i+q-1}_{(1)})_{(-1)}, u^{i+q}_{(1)}, \ldots, u^{j-1}_{(1)}, \DD_\chi(u^{j}, \ldots, u^{j+r-1})_{(-1)}, \\
& \qquad \qquad
u^{j+r}, \ldots, u^{||p+q|+r|}\big) 
\cdot_\enne \Big(u^1_{(2)} \cdots u^{i-1}_{(2)} \Big(\DD_\psi(u^i_{(1)}, \ldots, u^{i+q-1}_{(1)})_{(0)} \\
& \qquad \qquad 
\cdot_\enne \big(u^i_{(2)} \cdots u^{j-1}_{(2)}
        \DD_\chi(u^{j}, \ldots, u^{j+r-1})_{(0)}\big)\Big)\Big) \\
&\overset{\scriptscriptstyle{\rmref{circolodegliartisti}}}{=}
\gvf\big(u^1_{(1)}, \ldots, u^{i-1}_{(1)}, \DD_\psi(u^i, \ldots, u^{i+q-1})_{(-2)}, u^{i+q}_{(1)}, \ldots, u^{j-1}_{(1)}, \DD_\chi(u^{j}, \ldots, u^{j+r-1})_{(-1)}, \\
& \qquad \qquad
u^{j+r}, \ldots, u^{||p+q|+r|}\big) 
\cdot_\enne \big(u^1_{(2)} \cdots u^{i-1}_{(2)} \big(\DD_\psi(u^i, \ldots, u^{i+q-1})_{(-1)} \\
& \qquad \qquad 
\big(u^{i+q}_{(2)} \cdots u^{j-1}_{(2)}
        \DD_\chi(u^{j}, \ldots, u^{j+r-1})_{(0)}\big) \cdot_\enne \DD_\psi(u^i, \ldots, u^{i+q-1})_{(0)} \big)\big) \\
&\overset{\scriptscriptstyle{\rmref{zilvesta}}}{=}
\gvf\big(u^1_{(1)}, \ldots, u^{i-1}_{(1)}, \DD_\psi(u^i, \ldots, u^{i+q-1})_{(-2)}, u^{i+q}_{(1)}, \ldots, u^{j-1}_{(1)}, \DD_\chi(u^{j}, \ldots, u^{j+r-1})_{(-1)}, \\
& \qquad \qquad
u^{j+r}, \ldots, u^{||p+q|+r|}\big) 
\cdot_\enne \big(u^1_{(2)} \cdots u^{i-1}_{(2)} \DD_\psi(u^i, \ldots, u^{i+q-1})_{(-1)} \\
& \qquad \qquad 
u^{i+q}_{(2)} \cdots u^{j-1}_{(2)}
        \DD_\chi(u^{j}, \ldots, u^{j+r-1})_{(0)}\big) \cdot_\enne \big(u^1_{(3)} \cdots u^{i-1}_{(3)}  \DD_\psi(u^i, \ldots, u^{i+q-1})_{(0)} \big) \\
&=\big((\gvf \circ_{j-q+1} \chi)(u^1_{(1)}, \ldots, u^{i-1}_{(1)}, 
        \DD_\psi(u^{i}, \ldots, u^{i+q-1})_{(-1)}, 
        u^{i+q}, \ldots, u^{||p+q|+r|})\big) \\
&\hspace*{4cm} \cdot_\enne \big(u^1_{(2)} \cdots u^{i-1}_{(2)} \DD_\psi(u^{i}, \ldots, u^{i+q-1})_{(0)}\big) \\
\\
&=((\gvf \circ_{j-q+1} \chi) \circ_i \psi)(u^1, \ldots, u^{||p+q|+r|})
\end{split}
\end{equation*}
\end{footnotesize}
is true. To check the remaining identities in \rmref{danton} is left to the reader.
\end{proof}

By Theorem \ref{customerscopy} one deduces at once:

\begin{corollary}
\label{fundbuero}
If $N$ is a braided commutative Yetter-Drinfel'd algebra,
the cohomology groups $H^\bull(U,N)$ carry the structure of a Gerstenhaber algebra. In particular, if 
$\due U \blact {}$ is projective as a left $A$-module, then $\Ext^\bull_U(A,N)$ is a Gerstenhaber algebra.
\end{corollary}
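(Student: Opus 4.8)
The plan is to deduce the corollary directly from Theorem~\ref{clairefontaine1a} and Theorem~\ref{customerscopy}, so that the only substantive work is the one already isolated in Theorem~\ref{clairefontaine1a}. Granting that $C^\bull(U,N)$, equipped with the partial compositions \rmref{maxdudler}, the operad multiplication $\mu$ of \rmref{distinguished, I said 2}, and the units \rmref{habemuspapam1}, is an operad with multiplication, I would simply invoke Theorem~\ref{customerscopy}: it produces a cosimplicial $k$-module whose cohomology is a Gerstenhaber algebra, with cup product \rmref{cupco}, bracket \rmref{zugangskarte}, and coboundary $\gd\gvf=\{\mu,\gvf\}$.

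The one point I would then verify is that the cosimplicial $k$-module furnished abstractly by Theorem~\ref{customerscopy} is precisely the complex $\big(C^\bull(U,N),\gd\big)$ of \S\ref{palermo}, so that its cohomology really is $H^\bull(U,N)$. For the coboundary this is the identity $\gd\gvf=\{\mu,\gvf\}$ recorded in the Remark after Theorem~\ref{clairefontaine1a}, which I would obtain by unwinding \rmref{maxdudler} and \rmref{distinguished, I said 2} and simplifying with \rmref{tellmemore} and \rmref{maotsetung} until \rmref{spaetkauf} reappears; the cofaces and codegeneracies coming out of the operad formalism coincide with those of $C^\bull(U,N)$ by the same bookkeeping. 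This yields the first assertion, $H^\bull(U,N)=H\big(C^\bull(U,N),\gd\big)$ being a Gerstenhaber algebra. For the last clause, if $\due U \blact {}$ is projective as a left $A$-module then the bar resolution of $A$ consists of projective left $U$-modules, whence $H^\bull(U,N)\cong\Ext^\bull_U(A,N)$ as graded $k$-modules (cf.\ \S\ref{palermo}); transporting $\smallsmile$ and $\{\cdot,\cdot\}$ along this isomorphism makes $\Ext^\bull_U(A,N)$ a Gerstenhaber algebra.

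The genuine content therefore sits entirely in Theorem~\ref{clairefontaine1a}, which I would prove by checking the axioms of Definition~\ref{moleskine} directly from \rmref{maxdudler}; I expect the two delicate steps to be the associativity relation in \rmref{danton} for $j\ge q+i$, where one must interchange two evaluations of the inner cochain and braided commutativity \rmref{circolodegliartisti} of $N$ is exactly what makes this legitimate, and the relation for $i\le j<q+i$, which uses the Yetter-Drinfel'd compatibility \rmref{huhomezone1b} together with the translation-map identity \rmref{Sch3}. The passage from Theorem~\ref{clairefontaine1a} to the corollary is formal; the only place I would be careful is confirming that the coaction of $N$ entering the $\circ_i$ does not perturb the first and last terms of $\gd$ away from \rmref{spaetkauf}.
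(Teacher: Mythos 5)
Your proposal is correct and follows exactly the paper's route: the corollary is deduced immediately from Theorem~\ref{clairefontaine1a} together with Theorem~\ref{customerscopy}, with the identification $\gd\gvf=\{\mu,\gvf\}$ ensuring the cosimplicial module produced by the operad formalism is $\big(C^\bull(U,N),\gd\big)$, and the projectivity hypothesis giving $H^\bull(U,N)\simeq\Ext^\bull_U(A,N)$. The extra care you take over these two identifications is sensible but does not change the argument.
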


\begin{example}
\label{figlieditalia}
For $U = \Ae$ with the bialgebroid structure given as in Example \ref{todi} and in case $N:=A$, the Gerstenhaber algebra structure on 
$\Ext_\Ae(A,A)$ is (for $A$ projective over $k$) the classical one given in \cite{Ger:TCSOAAR}, as already mentioned in \cite[\S7]{KowKra:BVSOEAT}:
one has an isomorphism
\begin{equation}
\label{yamamay}
C^\bull(\Ae, A) \to C^\bull(A,A) := \Hom_k(A^{\otimes \bull}, A), \quad \gvf \mapsto \tilde{\gvf},
\end{equation}
where the right hand side refers to the standard Hochschild cochain complex 
and $ \tilde \varphi $ is defined by 
$$
        \tilde\gvf(a_1 \otimes_k \cdots \otimes_k a_n) 
        :=       \gvf\big((a_1 \otimes_k 1),  \ldots, (a_n \otimes_k 1)\big) 
$$
so that 
$$
        \gvf\big((a_1 \otimes_k b_1), \ldots, (a_n \otimes_k b_n)\big) =
        \tilde\gvf(a_1 \otimes_k \cdots \otimes_k a_n)b_n \cdots b_1. 
$$
It is easy to see that the respective induced isomorphism on cohomology is moreover one of Gerstenhaber algebras, and in particular one has
$$
\widetilde{\phi \circ_i \psi} = \tilde{\phi} \circ_i^{\scriptscriptstyle{G}} \tilde{\psi}, \qquad i = 1, \ldots, p,
$$ 
for all $\gvf \in C^p(\Ae,A)$, $\psi \in C^q(\Ae, A)$, where the right hand side 
\begin{equation}
\label{maifest}
 (\tilde{\gvf} \circ_i^{\scriptscriptstyle{G}} \tilde{\psi})(a_1, \ldots, a_{|p+q|}) :=  
\tilde{\gvf}\big(a_1, \ldots, a_{i-1}, \tilde{\psi}(a_i, \ldots, a_{i+|q|}), a_{i+q}, \ldots, a_{|p+q|}\big)
\end{equation}
are the classical insertion operations found by Gerstenhaber \cite{Ger:TCSOAAR}.

However, we want to underline that already in [{\em loc.~cit.}, p.~287], coefficients 
were introduced by considering (what turns out to be) an $A$-ring $N$ with an $A$-bimodule map $\phi: N \to A$ such that 
\begin{equation}
\label{camisanegra}
\phi(n)n' = n \cdot_\enne n' = n \phi(n'). 
\end{equation}
We now show how these coefficients are examples of our general construction: 
an $A$-ring $N$ is, as said before, by definition an $\Ae$-module ring and if $N$ is also a comodule, 
the Yetter-Drinfel'd condition \rmref{huhomezone1a} is automatically fulfilled by \rmref{maotsetung} and the $\Ae$-linearity of the coproduct on $\Ae$. 
Hence, every $A$-ring which also is an $\Ae$-comodule algebra is automatically a Yetter-Drinfel'd algebra. 
If a morphism $\phi \in \Hom_\Ae(N,A)$ now fulfills \rmref{camisanegra}, it is clear that 
$n \mapsto (\phi(n) \otimes_k 1_\ahha) \otimes_\ahha 1_\enne$ defines an $\Ae$-coaction which gives $N$ the structure of a braided commutative Yetter-Drinfel'd algebra. 
\end{example}

\begin{example}
Another classical example fits in this theory as follows: 
let $H$ be a Hopf algebra over a field $k$ with antipode $S$, and denote the Hochschild cohomology of $H$ as an algebra with values in $H$ itself by $H^\bull_{\scriptscriptstyle{\rm alg}}(H,H) := \Ext^\bull_\Hee(H,H)$, which, as mentioned in the preceding Example \ref{figlieditalia}, 
classically carries a Gerstenhaber algebra structure \cite{Ger:TCSOAAR}. 
Now, as follows from \cite[Thm.~VIII.3.1]{CarEil:HA}, one has a vector space isomorphism 
\begin{equation}
\label{mauerpark}
\Ext^\bull_\Hee(H,H) \simeq \Ext^\bull_\akka(k,\ad(H)).
\end{equation}
Here, $\ad(H)$ is $H$ as vector space but with left $H$-action on it 
given by the adjoint action $\ad(h)h' := h_{(1)} h' S(h_{(2)})$ for all $h,h' \in H$.  
It is a straightforward check that $\ad(H)$ equipped with this action, the left $H$-coaction given by the coproduct in $H$, and the ring structure given by the multiplication in $H$ forms 
a braided commutative Yetter-Drinfel'd algebra, hence $\Ext^\bull_\akka(k,\ad(H))$ is also a Gerstenhaber algebra by Corollary \ref{fundbuero}. 

We correspondingly want to show that \rmref{mauerpark} is also an isomorphism of Gerstenhaber algebras:
recall from, {\em e.g.}, \cite{FenTsy:HACHOQG, Kra:OTHCOQHS} that $\Ext^\bull_\akka(k,\ad(H))$ can be computed by means of the standard Hochschild cochain complex $C^\bull(H, H) := \Hom_k(H^{\otimes \bull}, H)$, but with coboundary
\begin{equation*}
\begin{split}
d\gvf(h^1, \ldots, h^{n+1}) &:= \ad(h^1) \gvf(h^2, \ldots, h^{n+1}) + \sum^n_{i=1} (-1)^i \gvf(h^1, \ldots, h^ih^{i+1}, \ldots, h^{n+1}) \\
& \quad + (-1)^{n+1} \gvf(h^1, \ldots, h^n)\gve(h^{n+1}).
\end{split}
\end{equation*}
Now, by means of the $k$-linear isomorphism
$$
\xi: C^\bull(H,H) \to C^\bull(H,H), \quad (\xi(\gvf))(h^1, \ldots, h^n) := \gvf(h^1_{(1)}, \ldots, h^n_{(1)})h^1_{(2)} \cdots h^n_{(2)}
$$
with inverse
$$
(\xi^{-1}(\psi))(v^1, \ldots, v^n) := \psi(v^1_{(1)}, \ldots, v^n_{(1)})S(v^1_{(2)}, \ldots, v^n_{(2)}),
$$
one proves \cite[\S2.4]{Kra:OTHCOQHS} that $\gd \xi = \xi d$ with respect to the Hochschild coboundary $\gd$ from \rmref{spaetkauf} (adapted to this example, {\em i.e.}, for $A :=k$ and $N := H$) and hence $\big(C^\bull(H,H), \gd\big) \simeq \big(C^\bull(H,H), d\big) $ as cochain complexes. Moreover, one has with respect 
to Gerstenhaber's insertion operations \rmref{maifest} for all $\gvf \in C^p(H,H)$, $\psi \in C^q(H,H)$, and $i = 1, \ldots, p$, 
by means of the properties of a Hopf algebra and since $k$ is central in $H$
\begin{footnotesize}
\begin{equation*}
\begin{split}
&(\xi^{-1}(\xi \gvf \circ_i^{\scriptscriptstyle{G}} \xi\psi))(h^1, \ldots, h^{|p+q|}) \\
&= \big((\xi \gvf \circ_i^{\scriptscriptstyle{G}} \xi\psi))(h^1_{(1)}, \ldots, h^{|p+q|}_{(1)})\big)S(h^1_{(2)} \cdots h^{|p+q|}_{(2)}) \\
&\overset{\scriptscriptstyle{\rmref{maifest}}}{=} 
\big(\xi \gvf\big)\big(h^1_{(1)}, \ldots, h^{i-1}_{(1)}, (\xi\psi)(h^i_{(1)}, \ldots, h^{i+|q|}_{(1)}), 
h^{i+q}_{(1)}, \ldots, h^{|p+q|}_{(1)}\big)S(h^1_{(2)} \cdots 
h^{|p+q|}_{(2)}) 
\\
&\overset{\scriptscriptstyle{}}{=} 
\gvf\big(h^1_{(1)}, \ldots, h^{i-1}_{(1)}, \psi(h^i_{(1)}, \ldots, h^{i+|q|}_{(1)})_{(1)} h^i_{(2)} \cdots h^{i+|q|}_{(2)}, 
h^{i+q}_{(1)}, \ldots, h^{|p+q|}_{(1)}\big) h^1_{(2)} \cdots h^{i-1}_{(2)} 
\\
& \qquad 
\psi(h^i_{(1)}, \ldots, h^{i+|q|}_{(1)})_{(2)} h^i_{(3)} \cdots h^{i+|q|}_{(3)} 
h^{i+q}_{(2)} \cdots h^{|p+q|}_{(2)}
S(h^1_{(3)} \cdots h^{i-1}_{(3)} h^i_{(4)} \cdots h^{i+|q|}_{(4)} h^{i+q}_{(3)} \cdots
h^{|p+q|}_{(3)}) 
\\
&\overset{\scriptscriptstyle{}}{=} 
\gvf\big(h^1_{(1)}, \ldots, h^{i-1}_{(1)}, \psi(h^i_{(1)}, \ldots, h^{i+|q|}_{(1)})_{(1)} h^i_{(2)} \cdots h^{i+|q|}_{(2)}, 
h^{i+q}, \ldots, h^{|p+q|}\big) 
\\
& \qquad \quad
\cdot_\akka h^1_{(2)} \cdots h^{i-1}_{(2)}  \psi(h^i_{(1)}, \ldots, h^{i+|q|}_{(1)})_{(2)} 
S(h^1_{(3)} \cdots h^{i-1}_{(3)}) \\
&\overset{\scriptscriptstyle{}}{=} 
\gvf\big(h^1_{(1)}, \ldots, h^{i-1}_{(1)}, \psi(h^i_{(1)}, \ldots, h^{i+|q|}_{(1)})_{(1)} h^i_{(2)} \cdots h^{i+|q|}_{(2)}, 
h^{i+q}, \ldots, h^{|p+q|}\big) 
\\
& \qquad \quad
\cdot_\akka \big(\ad(h^1_{(2)} \cdots h^{i-1}_{(2)})  \psi(h^i_{(1)}, \ldots, h^{i+|q|}_{(1)})_{(2)}\big) \\
&\overset{\scriptscriptstyle{\rmref{huetor1}}}{=} 
\gvf\big(h^1_{(1)}, \ldots, h^{i-1}_{(1)}, D_\psi(h^i, \ldots, h^{i+|q|})_{(1)}, 
h^{i+q}, \ldots, h^{|p+q|}\big) 
\\
& \qquad \quad
\cdot_\akka \big(\ad(h^1_{(2)} \cdots h^{i-1}_{(2)})  D_\psi(h^i, \ldots, h^{i+|q|})_{(2)}\big) \\
&\overset{\scriptscriptstyle{\rmref{maxdudler}}}{=}
(\gvf \circ_i \psi)(h^1, \ldots, h^{|p+q|}),
\end{split}
\end{equation*}
\end{footnotesize}
where, at certain places, we denoted the multiplication in $H$ by $\cdot_\akka$ 
to better illustrate the analogy to \rmref{maxdudler}.
Hence, as claimed,
the vector space 
isomorphism $\xi$ induces an isomorphism of Gerstenhaber algebras on cohomology.
\end{example}

\begin{example}
If $H$ is a $k$-bialgebra and $V$ simultaneously an $H$-module algebra and a left $H$-comodule, then $V \# H$ is a left bialgebroid over $V$ if and only if $V$ is a braided commutative Yetter-Drinfel'd algebra over $H$, see \cite[Thm.~5.1]{Lu:HAAQG} or \cite[Thm.~4.1]{BrzMil:BBAD}.
In this case, both $\Ext_H(k,V)$ and $\Ext_{V \# H}(V,V)$ are Gerstenhaber algebras 
as $V$ is automatically a braided commutative Yetter-Drinfel'd algebra over $V \# H$, 
see the comment below Definition \ref{fiocchi}. 
For example, if $H$ is a finite dimensional Hopf algebra over a field with bijective antipode 
and $H^*$ its $k$-linear dual, then the Heisenberg double $\cH(H^*)$ is a braided commutative Yetter-Drinfel'd algebra over the Drinfel'd double $\cD(H)$, see \cite{Sem:HDHBSAABCYDMAOTDD}; hence, $\Ext_{\cH(H^*) \# \cD(H)}(\cH(H^*), \cH(H^*))$ and $\Ext_{\cD(H)}(k, \cH(H^*))$ are Gerstenhaber algebras. 
On the other hand, the Yetter-Drinfel'd algebra structure on $\cH(H^*)$ over $\cD(H)$ arises (see [{\em op.~cit.}] again) from the construction of $\cH(H^*)$ as a braided product of ${H^*}_\coop$ and $H$, which both are braided commutative Yetter-Drinfel'd algebras over $\cD(H)$ as well. In particular,
both $\Ext_{\cD(H)}(k, H)$ and $\Ext_{\cD(H)}(k, {H^*}_\coop)$ carry Gerstenhaber algebra structures, which 
can be transferred to certain Gerstenhaber-Schack cohomology groups $H_{\scriptscriptstyle{\rm GS}}(.,.)$: for two Hopf bimodules (or {\em tetramodules}) $M, N$, one has an isomorphism \cite{Tai:CTOHBACP}
$$
\Ext^\bull_{\cD(H)}(M^{\scriptscriptstyle{\rm coinv}} , N^{\scriptscriptstyle{\rm coinv}}) \simeq H_{\scriptscriptstyle{\rm GS}}(M,N).
$$ 
Applying this to our situation above means 
$
\Ext_{\cD(H)}(k, {H^*}_\coop) \simeq  H_{\scriptscriptstyle{\rm GS}}(H,H \otimes {H^*}_\coop), 
$
and therefore yields a Gerstenhaber algebra structure on 
$$
H_{\scriptscriptstyle{\rm GS}}(H,H \otimes {H^*}_\coop) \simeq H_{\scriptscriptstyle{\rm GS}}(H,\End_k(H)). 
$$
Observe here that the Gerstenhaber algebra structure on $H_{\scriptscriptstyle{\rm GS}}(H,H)$ obtained in \cite{FarSol:GSOTCOHA} by similar arguments is trivial, as shown in \cite{Tai:IHBCOIDHAAGCOTYP}.

We leave all details of this example paragraph to a future extension, also in the hope of discovering genuinely new examples which do not resemble or generalise classical Gerstenhaber algebra structures.
\end{example}

\subsection{$C^\bull_{\rm co}(U,N)$ as a cyclic operad with multiplication}
\label{acquedottofelice}

Let again $N$ be a left $U$-module (with action denoted by juxtaposition) 
which is simultaneously a left $U$-comodule with coaction $\gD_\enne: n \mapsto n_{(-1)} \otimes_\ahha n_{(0)}$ 
such that the underlying induced left $\Ae$-module structures coincide. 
As before, we assume that with respect to this left $\Ae$-module structure $N$ is an $A$-ring with multiplication denoted by $(n,n') \mapsto n \cdot_\enne n'$. Observe once more that by these requirements in particular Eq.~\rmref{normalny} holds.
We then define, as a generalisation of \cite[p.~65]{GerSch:ABQGAAD},
$$
\circ_i : \corm p \otimes_k \corm q \rightarrow 
        \corm {|p+q|}, \quad i = 1, \ldots, p,
$$
by
\begin{equation}
\label{maxdudler2}
\begin{split}
(&u^1, \ldots, u^p, n) \circ_i (v^1, \ldots, v^q, n') \\
&:= (u^1, \ldots, u^{i-1}, u^{i}_{(1)}v^1, \ldots, u^{i}_{(q)}v^q, (u^{i}_{(q+1)}n')_{(-p+i)}u^{i+1}, \\
&\qquad \qquad \quad 
\ldots, (u^{i}_{(q+1)}n')_{(-1)}u^p,  (u^{i}_{(q+1)} n')_{(0)} \cdot_\enne n),
\end{split}
\end{equation}
which 
is well-defined by \rmref{maotsetung}, \rmref{tellmemore}, \rmref{auchnochnicht}, and \rmref{normalny}.
For zero cochains, that is, elements $n \in N$, we define $n \circ_i (v^1, \ldots, v^q, n') = 0$ for all $i$ and
all $(v^1, \ldots, v^q, n') \in \corm q$, whereas 
\begin{equation*}
\begin{split}
(&u^1, \ldots, u^p, n) \circ_i n' \\
&:=  (u^1, \ldots, u^{i-1}, (u^{i}n')_{(-p+i)}u^{i+1}, \ldots, (u^{i}n')_{(-1)}u^p,  (u^{i}n')_{(0)} \cdot_\enne n) \in \corm {p-1}.
\end{split}
\end{equation*}
The {distinguished element}, {\em i.e.}, the {operad multiplication} \rmref{distinguished, I said 1} is here
\begin{equation}
\label{distinguished, I said 3}
        \mu := (1_\uhhu, 1_\uhhu, 1_\enne) \in \corm 2,
\end{equation}
and also set
\begin{equation}
\label{habemuspapam2}
\mathbb{1} := (1_\uhhu, 1_\enne) \in \corm 1 \quad \mbox{along with} \quad  e := 1_\enne \in \corm 0. 
\end{equation}
With \rmref{maxdudler2} and \rmref{distinguished, I said 3}, the cup product \rmref{cupco} explicitly becomes
$$
(u^1, \ldots, u^p, m) \smallsmile (v^1, \ldots, v^q, n) = (u^1, \ldots, u^p, m_{(-q)}v^1, \ldots, m_{(-1)}v^q, m_{(0)} \cdot_\enne n). 
$$
Observe that the differential \rmref{appolloni2} formed by the cofaces in \rmref{anightinpyongyang} can be expressed as
$$
\gd(z,n)  = \{\mu, (z,n)\},
$$ 
where the right hand side is again defined as in \rmref{zugangskarte} and we used the abbreviation 
$(z,n) := (u^1, \ldots, u^p, n)$. 

\begin{theorem}
\label{clairefontaine1b}
Let $N$ be a braided commutative Yetter-Drinfel'd algebra over a left bialgebroid $U$.
Then $\corm \bull$ with the structure given in \rmref{maxdudler2}--\rmref{habemuspapam2} yields an operad with multiplication.
\end{theorem}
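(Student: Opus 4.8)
The plan is to check directly that $\big(\corm\bull,\{\circ_i\},\mu,\mathbb{1},e\big)$ with the operations \rmref{maxdudler2}--\rmref{habemuspapam2} satisfies each of the axioms in Definition \ref{moleskine}, following the pattern of the proof of Theorem \ref{clairefontaine1a}. One begins by noting that every $\circ_i$ is well defined over the tensor products involved: this is exactly where the $\Ae$-linearity of the coaction \rmref{maotsetung}, the Takeuchi conditions \rmref{tellmemore} and \rmref{auchnochnicht}, and the relation \rmref{normalny} are used --- the same facts already invoked in setting up \rmref{maxdudler2}. The vanishing $\gvf\circ_i\psi=0$ for $p<i$ or $p=0$ is built into the definition; the unit relations $\gvf\circ_i\mathbb{1}=\mathbb{1}\circ_i\gvf=\gvf$ follow from counitality of $\gD$ on $U$ together with the module- and comodule-algebra unit conditions in \rmref{zilvesta} and \rmref{corsasemplice}; and for the operad multiplication $\mu=(1_\uhhu,1_\uhhu,1_\enne)$ one reads off from $\gD(1_\uhhu)=1_\uhhu\otimes_\ahha 1_\uhhu$ and $\gD_\enne(1_\enne)=1_\uhhu\otimes_\ahha 1_\enne$ that $\mu\circ_1\mu=\mu\circ_2\mu=(1_\uhhu,1_\uhhu,1_\uhhu,1_\enne)$ and $\mu\circ_1 e=\mu\circ_2 e=\mathbb{1}$, which is \rmref{distinguished, I said 1}; the same two formulas show that the cofaces \rmref{anightinpyongyang} are recovered as $\gd(z,n)=\{\mu,(z,n)\}$.

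The substance is the associativity relation \rmref{danton}, which I would verify by distinguishing, for $\gvf\in\corm p$, $\psi\in\corm q$, $\chi\in\corm r$, the three cases $j<i$, $i\le j<q+i$, and $j\ge q+i$. In each case one expands both bracketings using \rmref{maxdudler2}, keeping track of how the comodule decoration legs of the inserted $N$-entries propagate through the $U$-slots lying to their right and interact with the coproducts of the slots carrying factors of $u^i$. The two non-trivial situations are exactly as in the proof of Theorem \ref{clairefontaine1a}: whenever a residual $U$-action on $N$ has to be commuted past the coaction of $N$ one invokes the Yetter--Drinfel'd compatibility \rmref{huhomezone1a} (conveniently in the form \rmref{huhomezone1b}), and whenever the two bracketings produce the $N$-components of $\psi$ and $\chi$ multiplied in opposite orders one invokes braided commutativity \rmref{circolodegliartisti}; in both places one then reabsorbs the extra coaction leg that appears using the module-algebra axiom \rmref{zilvesta}, the comodule-algebra axiom \rmref{corsasemplice}, and coassociativity of $\gD$ and $\gD_\enne$.

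I expect the only real obstacle to be combinatorial rather than conceptual. In contrast to the complex of \S\ref{palermo}, where the $N$-valued output of a cochain is paired against a single product in $U$, here the coaction of the inserted $N$-entry fans out as a decoration over \emph{all} the $U$-slots lying to its right (and \rmref{maxdudler2} has the opposite handedness, the inner cochain contributing on the left of $\cdot_\enne$), so the bookkeeping of these legs and their repeated refinement under coassociativity is the part that needs care; the braided-commutative Yetter--Drinfel'd structure of $N$ enters only at the two points just indicated, and everything else is counitality, coassociativity, and the (co)module-algebra compatibilities. Since the computation runs in complete parallel to that of Theorem \ref{clairefontaine1a}, I would display one representative case of each of the two non-trivial flavours and leave the remaining verifications to the reader. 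An application of Theorem \ref{customerscopy} then equips $H^\bull_{\rm co}(U,N)$ with the Gerstenhaber algebra structure asserted in Theorem \ref{main2}, which under flatness of $U_\ract$ over $A$ restricts via \rmref{cotor} to one on $\Cotor^\bull_U(A,N)$.
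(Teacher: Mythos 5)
Your proposal is correct and follows exactly the route the paper takes: the published proof simply states that the verification is analogous to the computations in the proof of Theorem~\ref{clairefontaine1a} and omits it, and your plan --- checking the axioms of Definition~\ref{moleskine} case by case, with the Yetter--Drinfel'd compatibility \rmref{huhomezone1a} entering when a residual $U$-action is commuted past the coaction and braided commutativity \rmref{circolodegliartisti} entering when the two bracketings multiply the $N$-components in opposite orders --- is precisely that analogous computation, correctly locating where each hypothesis on $N$ is used.
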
 

\begin{proof}
The proof relies on straightforward computations analogous to those in the proof of Theorem \ref{clairefontaine1a}, which is why it is omitted.
\end{proof}

By Theorem \ref{customerscopy} one deduces at once:
 
\begin{corollary}
\label{capodistria}
With the assumptions of Theorem \ref{clairefontaine1b} on $N$, the cohomology groups $H^\bull_{\rm co}(U,N)$ carry the structure of a Gerstenhaber algebra.
In particular, if
$U_\ract$ is flat as a right $A$-module, then $\Cotor^\bull_U(A,N)$ is a Gerstenhaber algebra.
\end{corollary}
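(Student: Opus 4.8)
The plan is to obtain this as an immediate consequence of Theorem~\ref{clairefontaine1b} together with the general principle recorded in Theorem~\ref{customerscopy}. By Theorem~\ref{clairefontaine1b}, the collection $\{\corm p\}_{p\geq 0}$ equipped with the insertion operations $\circ_i$ of \rmref{maxdudler2}, the operad multiplication $\mu$ of \rmref{distinguished, I said 3}, and the elements $\mathbb{1}$, $e$ of \rmref{habemuspapam2} is an operad with multiplication in the sense of Definition~\ref{moleskine}. Theorem~\ref{customerscopy} then attaches to this datum a cosimplicial $k$-module whose cohomology, taken with respect to the differential $\gd\varphi=\{\mu,\varphi\}$ built from the Gerstenhaber bracket \rmref{zugangskarte}, is a Gerstenhaber algebra, with cup product given by \rmref{cupco}.

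What remains is to identify this abstractly produced cosimplicial $k$-module with the one carried by $\corm\bull$ in Proposition~\ref{kamille}, so that its cohomology is genuinely $H^\bull_{\rm co}(U,N)$ of Definition~\ref{avviso}. The underlying graded module is the same by construction, so it suffices to check that the canonical cofaces and codegeneracies associated to $(\corm\bull,\circ_i,\mu,e)$ coincide with the explicit maps in \rmref{anightinpyongyang}; unravelling \rmref{maxdudler2} with $\mu=(1_\uhhu,1_\uhhu,1_\enne)$ and $e=1_\enne$ reduces this to the comodule-algebra relations \rmref{corsasemplice} for $N$ together with counitality of the coaction. In particular the total coboundary obtained from the operad structure equals $\gb$ of \rmref{appolloni2}, which is precisely the identity $\gd(z,n)=\{\mu,(z,n)\}$ already noted just before Theorem~\ref{clairefontaine1b}. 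Granting this, Theorem~\ref{customerscopy} endows $H^\bull_{\rm co}(U,N)=H(\corm\bull,\gb)$ with a Gerstenhaber algebra structure.

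For the final assertion I would invoke the isomorphism \rmref{cotor}: when $U_\ract$ is flat as a right $A$-module one has $H^\bull_{\rm co}(U,N)\simeq\Cotor^\bull_U(A,N)$ as graded $k$-modules, so transporting the cup product and bracket along this isomorphism equips $\Cotor^\bull_U(A,N)$ with the asserted structure. The only genuine work in the whole argument is the bookkeeping identification of the operadic cosimplicial structure with that of Proposition~\ref{kamille} --- a routine but slightly lengthy unwinding of \rmref{maxdudler2}, \rmref{distinguished, I said 3}, and \rmref{habemuspapam2}; once the faces are matched, everything else follows immediately from Theorems~\ref{clairefontaine1b} and~\ref{customerscopy}.
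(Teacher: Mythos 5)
Your proposal is correct and follows exactly the paper's route: Theorem~\ref{clairefontaine1b} supplies the operad with multiplication on $\corm\bull$, Theorem~\ref{customerscopy} then yields the Gerstenhaber structure on cohomology (the identification of the operadic coboundary with $\gb$ being the identity $\gd(z,n)=\{\mu,(z,n)\}$ already recorded just before Theorem~\ref{clairefontaine1b}), and the flatness hypothesis gives $\Cotor^\bull_U(A,N)$ via \rmref{cotor}. The paper states this as an immediate deduction, so your additional bookkeeping remarks only make explicit what the paper leaves implicit.
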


\begin{rem}
\label{tariffaregionalelazio2}
Taking for $N$ the base algebra $A$, the conditions mentioned in the preceding proposition can, as said before, always be fulfilled for the canonical $U$-action and $U$-coaction \rmref{stabatmater1} resp.\ \rmref{stabatmater2} (and the formulae notably simplify: the cup product, for example, just becomes the tensor product). 
However, this is not the case if one twists the coaction $a \mapsto s(a)\gs$ on $A$ by a grouplike element $\gs \in U$ (for the underlying $A$-coring of $U$): unless $\gs = 1$, the base algebra $A$ is already not a comodule algebra any more, not even for a bialgebra (where $A=k$). 
This seems to correspond to Menichi's conjecture in \cite[\S10]{Men:CMCMIALAM} that $C^\bull_{\rm co}(H, {}^\gs k_\gd)$, where $H$ is a Hopf algebra over a commutative ring $k$ and $(\gd, \gs)$ is a modular pair in involution, does {\em not} give an operad with multiplication unless $\gs = 1$.
\end{rem}

\subsection{Batalin-Vilkovisky algebra structures on $\Cotor$}
In this section, we want to investigate how the operad with multiplication from Theorem \ref{clairefontaine1b} can be given the structure of a cyclic operad with multiplication; or, equivalently, under which conditions the Gerstenhaber algebra in Corollary \ref{capodistria} becomes a Batalin-Vilkovisky algebra. In particular, as seen in Proposition \ref{kamille} and Eq.~\rmref{casadilivia}, respectively, whenever a cyclic operation needs to be defined, the left $U$-comodule $N$ has to be given the structure of a {\em right} $U$-action. This can be obtained if $U$ is not merely a left bialgebroid but rather a left Hopf algebroid and if moreover the base algebra $A$ is itself a right $U$-module. One can then show:

\begin{theorem}
\label{clairefontaine2}
Let $N \in \yd$ be a braided commutative Yetter-Drinfel'd algebra over a left Hopf algebroid $U$.
Assume that $A$ carries an ($\Ae$-balanced) right $U$-action 
$A \otimes_\Aee \due U \lact \ract \to A,\ a \otimes_\Aee u \mapsto a \Yleft
u$, and define the corresponding {\em right character}
\begin{equation}
\label{rosti}
\pl: U \to A, \quad \pl u := 1_\ahha \Yleft u, \quad \mbox{for all} \ u \in U.
\end{equation} 
If $A$ with this right action and the canonical coaction \rmref{stabatmater2} fulfills \rmref{huhomezone2}, {\em i.e.}, is anti Yetter-Drinfel'd, then the left $U$-comodule $N$ equipped with the right $U$-action  
\begin{equation}
\label{viaappia}
N \otimes U \to N, \quad n \otimes u \mapsto nu:= (u_-n) \ract \pl u_+
\end{equation}
is also an {\em anti} Yetter-Drinfel'd module. 
If $N$ is moreover stable
with respect to \rmref{viaappia} and its given left $U$-comodule structure, 
then $\corm \bull$ with the structure given in \rmref{maxdudler2}--\rmref{habemuspapam2} and the cocyclic operator in \rmref{anightinpyongyang} yields a cyclic operad with multiplication. 
\end{theorem}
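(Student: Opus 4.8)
The plan is to proceed in two stages. The first, and the only one that genuinely uses the hypotheses on $A$ and on $\pl$, is to show that $N$, which is already a braided commutative Yetter-Drinfel'd algebra, becomes a stable anti Yetter-Drinfel'd module once equipped with the right $U$-action \rmref{viaappia}; the second stage then combines this with the operad-with-multiplication structure already provided by Theorem \ref{clairefontaine1b} to establish the additional axioms \rmref{superfluorescent1} and \rmref{superfluorescent2} that turn $\corm\bull$ into a cyclic operad with multiplication.

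For the first stage I would verify, in order: that \rmref{viaappia} is a unital associative right $U$-action --- unitality follows from the unit case $(1_\uhhu)_+\otimes_\Aopp(1_\uhhu)_-=1_\uhhu\otimes_\Aopp 1_\uhhu$ of \rmref{Sch9} together with $\pl 1_\uhhu=1_\ahha$, and associativity from the anti-multiplicativity \rmref{Sch6} of the translation map, the Takeuchi relation \rmref{Sch1}, and the $\Ae$-linearity of the right $U$-action on $A$; that the induced $\Ae$-module structure ${}_\blact N_\bract$ coming from \rmref{viaappia} coincides with the one underlying the $U$-comodule, which by \rmref{Sch9} and $\pl\circ s=\id_\ahha$ reduces to the Yetter-Drinfel'd compatibility already built into $N$; and finally the anti Yetter-Drinfel'd identity \rmref{huhomezone2} for $N$. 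This last point is the computational core of the stage: one applies $\gD_\enne$ to $(u_-n)\ract\pl u_+$ via \rmref{maotsetung}, then substitutes the Yetter-Drinfel'd identity of $N$ in the form \rmref{huhomezone1b} evaluated at $u_-$, untangles the iterated translation maps using \rmref{Sch3}, \rmref{Sch4} and \rmref{Sch5}, and finally invokes the anti Yetter-Drinfel'd hypothesis on the base $A$ --- equivalently, the resulting compatibility of $\pl$ with $\gD$ --- to recombine the $A$-decorations into the shape $u_-n_{(-1)}u_{+(1)}\otimes_\ahha n_{(0)}u_{+(2)}$ demanded by \rmref{huhomezone2}. Conceptually this is the bialgebroid analogue of the familiar fact that a Yetter-Drinfel'd module twisted by a suitable character is anti Yetter-Drinfel'd. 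Since stability of $N$ with respect to \rmref{viaappia} is the remaining hypothesis, Proposition \ref{kamille}(2) now applies and gives that $\corm\bull$ is a cocyclic $k$-module with para-cocyclic operator $\tau$ as in \rmref{casadilivia}; in particular $\tau_n^{\,n+1}=\id_{\corm n}$, which is the third line of \rmref{superfluorescent1}.

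For the second stage, the unit-type relations $\tau_1\mathbb{1}=\mathbb{1}$ and $\tau_2\mu=\mu$ (the latter being \rmref{superfluorescent2}) come out at once from substituting $\mathbb{1}=(1_\uhhu,1_\enne)$ and $\mu=(1_\uhhu,1_\uhhu,1_\enne)$ into \rmref{casadilivia} and using $\gD_\enne(1_\enne)=1_\uhhu\otimes_\ahha 1_\enne$ from \rmref{corsasemplice}, the unit case of \rmref{Sch9}, and $\pl 1_\uhhu=1_\ahha$. What is left, and is the heart of the proof, are the two intertwining identities
\begin{equation*}
\tau_{|p+q|}(\gvf\circ_1\psi)=\tau_q\psi\circ_q\tau_p\gvf,\qquad
\tau_{|p+q|}(\gvf\circ_i\psi)=\tau_p\gvf\circ_{i-1}\psi\quad(2\le i\le p),
\end{equation*}
for $\gvf\in\corm p$ and $\psi\in\corm q$. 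Both would be proved by expanding each side with the explicit formulae \rmref{maxdudler2} for $\circ_i$ and \rmref{casadilivia} for $\tau$ and then reconciling the resulting elementary tensors slot by slot. For $2\le i\le p$ the block inserted by $\circ_i$ lies away from the $U$-slot that $\tau$ rotates to the rear, so this is essentially index arithmetic together with coassociativity of $\gD$ and the counit axioms. The genuinely delicate case is $i=1$, where applying $\tau$ carries the freshly inserted block $u^1_{(1)}v^1,\dots,u^1_{(q)}v^q$ past the remaining entries: matching the two sides then forces the use of \rmref{Sch6}, the identities \rmref{Sch3}--\rmref{Sch5}, the comodule-algebra relation \rmref{corsasemplice} to split off coactions of products, and the braided commutativity \rmref{circolodegliartisti} of $N$ to commute the $\cdot_\enne$-factor past a coaction; and it is precisely the anti Yetter-Drinfel'd property of $N$ secured in the first stage that makes the decorations produced by $\tau$ on the right $U$-module leg reassemble correctly. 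This last identity, with its lengthy index bookkeeping, is where I expect essentially all of the difficulty to reside; granting it together with the items above, all axioms of a cyclic operad with multiplication are satisfied, which is the assertion --- and, through Theorem \ref{holl}, it then produces the Batalin-Vilkovisky algebra structure on $H^\bull_{\rm co}(U,N)$, hence on $\Cotor^\bull_U(A,N)$, recorded right after the statement.
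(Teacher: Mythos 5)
Your proposal is correct and follows essentially the same route as the paper: first one shows that the Yetter--Drinfel'd module $N$ with the twisted right action \rmref{viaappia} becomes anti Yetter--Drinfel'd by applying $\gD_\enne$, inserting \rmref{huhomezone1b}, untangling translation maps via \rmref{Sch3}--\rmref{Sch5}, and feeding in the aYD hypothesis on $A$ through the compatibility of $\pl$ with $\gD$; then one verifies the cyclic-operad axioms, with the only substantial computation being $\tau(\gvf\circ_1\psi)=\tau\psi\circ_q\tau\gvf$, carried out exactly with the tools you list (\rmref{Sch3}--\rmref{Sch6}, \rmref{corsasemplice}, \rmref{circolodegliartisti}). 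The paper likewise dismisses the remaining identities as similar or obvious, so your outline matches its proof in both structure and substance.
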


\begin{rem}
The fact that the YD module $N$ equipped with the right $U$-action \rmref{viaappia} becomes an aYD if $A$ itself is aYD 
is due to the more general fact that the tensor product (over $A$) of an aYD module with a YD module yields an aYD module again (in other words, $\ayd$ forms a module category over $\yd$). 
Let us also remark that a straightforward check proves that if the base algebra $A$ of a left Hopf algebroid $U$ is aYD, then $(U, A, \Aop)$ defines a {\em full} Hopf algebroid
 with involutive antipode in the sense of B\"ohm-Szlach\'anyi (see, {\em e.g.}, \cite{Boe:HA} for the precise definition), the antipode (and its inverse) given by 
$$
Su := u_- \ract \pl u_+, \qquad \forall \ u \in U.
$$ 
One could also be tempted to think that starting directly with aYD modules simplifies matters; however, since there is no corresponding monoidal category, 
there is no such thing as a monoid in that category, which in turn would be necessary for the Gerstenhaber algebra structure.   
\end{rem}

\begin{proof}[Proof of Theorem \ref{clairefontaine2}]
The first claim that $N \in \yd$ equipped with the action \rmref{viaappia} becomes aYD is proven as follows:
if $A$ is aYD w.r.t.\ the mentioned right action and the left $U$-coaction $\gD_\ahha$ from \rmref{stabatmater2}, then one has
$$
s(a \Yleft u) \otimes_\ahha 1_\ahha = \gD_\ahha(a \Yleft u) = u_- s(a) u_{+(1)} \otimes_\ahha (1_\ahha \Yleft u_{+(2)}), 
$$
hence
$$
s(\pl u) = (u_- u_{+(1)}) \ract \pl u_{+(2)}. 
$$
Therefore,
\begin{equation*}
\begin{split}
\gD_\enne(nu) &\overset{\phantom{\scriptscriptstyle\rmref{viaappia}}}{=} (u_-n)_{(-1)} \bract \pl u_+ \otimes_\ahha (u_-n)_{(0)} \\ 
&\overset{\phantom{\scriptscriptstyle\rmref{viaappia}}}{=} 
(\pl u_{++(2)} \blact (u_-n)_{(-1)}) u_{+-} u_{++(1)} \otimes_\ahha  (u_-n)_{(0)} \\
&\overset{\scriptscriptstyle\rmref{huhomezone1b}}{=}  
u_{-+(1)} n_{(-1)} u_{--} u_{+-} u_{++(1)} \otimes_\ahha  (u_{-+(2)} n_{(0)}) \ract \pl u_{++(2)} \\
&\overset{\scriptscriptstyle\rmref{Sch5}}{=}  
u_{-(1)+(1)} n_{(-1)} u_{-(1)-} u_{-(2)} u_{+(1)} \otimes_\ahha  (u_{-(1)+(2)} n_{(0)}) \ract \pl u_{+(2)} \\
&\overset{\scriptscriptstyle\rmref{Sch3}}{=}  
u_{-(1)} n_{(-1)} u_{+(1)} \otimes_\ahha  (u_{-(2)} n_{(0)}) \ract \pl u_{+(2)} \\
&\overset{\scriptscriptstyle\rmref{Sch5}}{=}  
u_{-} n_{(-1)} u_{++(1)} \otimes_\ahha  (u_{+-} n_{(0)}) \ract \pl u_{++(2)} \\
&\overset{\scriptscriptstyle\rmref{Sch4}}{=}  
u_{-} n_{(-1)} u_{+(1)} \otimes_\ahha  (u_{+(2)-} n_{(0)}) \ract \pl u_{+(2)+}, 
\end{split}
\end{equation*}
and the last line is precisely the aYD condition \rmref{huhomezone2} for the right $U$-action \rmref{viaappia} on the left $U$-comodule $N$.

To prove that  $\corm \bull$ forms a cyclic operad with multiplication, we have to verify the conditions \rmref{superfluorescent1} and \rmref{superfluorescent2}: we only prove the first identity, the rest being either similar or obvious. One computes for $p, q \geq 1$:
\begin{footnotesize}
\begin{equation*}
\begin{split}
&\tau((u^1, \ldots, u^p, n) \circ_1 (v^1, \ldots, v^q, n')) \\
&\overset{\scriptscriptstyle\rmref{maxdudler2}}{=} 
\tau\big(u^{1}_{(1)}v^1, \ldots, u^{1}_{(q)}v^q, (u^{1}_{(q+1)}n')_{(-p+1)}u^{2}, 
\ldots, (u^{1}_{(q+1)}n')_{(-1)}u^p,  (u^{1}_{(q+1)} n')_{(0)} \cdot_\enne n\big) 
\\
&\overset{\scriptscriptstyle\rmref{anightinpyongyang}}{=}   
\Big(v^1_{-(1)}u^{1}_{(1)-(1)}u^1_{(2)}v^2, \ldots, v^1_{-(q-1)}u^{1}_{(1)-(q-1)}u^{1}_{(q)}v^q, v^1_{-(q)}u^{1}_{(1)-(q)}(u^{1}_{(q+1)}n')_{(-p+1)}u^{2}, 
\\
&\qquad \qquad \quad 
\ldots, v^1_{-(|q+p|-1)}u^{1}_{(1)-(|q+p|-1)} 
(u^{1}_{(q+1)}n')_{(-1)}u^p,  \\
&\qquad \qquad \qquad 
v^1_{-(|q+p|)}u^{1}_{(1)-(|q+p|)} \big((u^{1}_{(q+1)} n')_{(0)} \cdot_\enne n\big)_{(-1)}, 
\big((u^{1}_{(q+1)} n')_{(0)} \cdot_\enne n\big)_{(0)} u^1_{(1)+} v^1_+\Big) 
\\
%
&\overset{\scriptscriptstyle\rmref{corsasemplice}, \rmref{huhomezone1b}}{=}  
\Big(v^1_{-(1)}u^{1}_{(1)-(1)}u^1_{(2)}v^2, \ldots, v^1_{-(q-1)}u^{1}_{(1)-(q-1)}u^{1}_{(q)}v^q, 
\\
&\qquad \qquad \quad 
v^1_{-(q)}u^{1}_{(1)-(q)}u^{1}_{(q+1)+(1)}n'_{(-p)}u^1_{(q+1)-(1)}u^{2}, 
\\
&\qquad \qquad \quad 
\ldots, v^1_{-(|q+p|-1)}u^{1}_{(1)-(|q+p|-1)} 
u^{1}_{(q+1)+(p-1)}n'_{(-2)}u^1_{(q+1)-(p-1)}u^p,  \\
&\qquad \qquad \qquad 
v^1_{-(|q+p|)}u^{1}_{(1)-(|q+p|)} u^{1}_{(q+1)+(p)} n'_{(-1)} u^1_{(q+1)-(p)} n_{(-1)},
 \\
&\qquad \qquad \quad 
(u^{1}_{(q+1)+(p+1)} n'_{(0)} \cdot_\enne n_{(0)}) u^1_{(1)+} v^1_+\Big)
\\
&\overset{\scriptscriptstyle\rmref{Sch4}}{=}  
\Big(v^1_{-(1)}u^{1}_{+(1)-(1)}u^1_{+(2)}v^2, \ldots, v^1_{-(q-1)}u^{1}_{+(1)-(q-1)}u^{1}_{+(q)}v^q, 
\\
&\qquad \quad
v^1_{-(q)}u^{1}_{+(1)-(q)}u^{1}_{+(q+1)}n'_{(-p)}u^1_{-(1)}u^{2}, 
\\
&\qquad  
\ldots, v^1_{-(|q+p|-1)}u^{1}_{+(1)-(|q+p|-1)} 
u^{1}_{+(|q+p|)}n'_{(-2)}u^1_{-(p-1)}u^p,  \\
&\qquad \qquad
v^1_{-(|q+p|)}u^{1}_{+(1)-(|q+p|)} u^{1}_{+(q+p)} n'_{(-1)} u^1_{-(p)} n_{(-1)},
(u^{1}_{+(q+p+1)} n'_{(0)} \cdot_\enne n_{(0)}) u^1_{+(1)+} v^1_+\Big)
\\
&\overset{\scriptscriptstyle\rmref{Sch3}}{=}  
\Big(v^1_{-(1)}v^2, \ldots, v^1_{-(q-1)}v^q, 
v^1_{-(q)}n'_{(-p)}u^1_{-(1)}u^{2}, 
\ldots, v^1_{-(|q+p|-1)}n'_{(-2)}u^1_{-(p-1)}u^p,  
\\
&\qquad \qquad \qquad 
v^1_{-(|q+p|)} n'_{(-1)} u^1_{-(p)} n_{(-1)},
(u^{1}_{+(2)} n'_{(0)} \cdot_\enne n_{(0)}) u^1_{+(1)} v^1_+\Big)
\\
&\overset{\scriptscriptstyle{\rmref{viaappia}, \rmref{zilvesta}}}{=}  
\Big(v^1_{-(1)}v^2, \ldots, v^1_{-(q-1)}v^q, 
v^1_{-(q)}n'_{(-p)}u^1_{-(1)}u^{2}, 
\ldots, v^1_{-(|q+p|-1)}n'_{(-2)}u^1_{-(p-1)}u^p,  
\\
&\qquad \qquad \qquad 
v^1_{-(|q+p|)} n'_{(-1)} u^1_{-(p)} n_{(-1)},
\big(u^1_{+(1)-(1)}u^{1}_{+(2)} n'_{(0)} \cdot_\enne (u^1_{+(1)-(2)} n_{(0)})\ract \pl u^1_{+(1)+}\big) v^1_+\Big)
\\ 
&\overset{\scriptscriptstyle\rmref{Sch5}, \rmref{Sch3}}{=}  
\Big(v^1_{-(1)}v^2, \ldots, v^1_{-(q-1)}v^q, 
v^1_{-(q)}n'_{(-p)}u^1_{-(1)}u^{2}, 
\ldots, v^1_{-(|q+p|-1)}n'_{(-2)}u^1_{-(p-1)}u^p,  
\\
&\qquad \qquad \qquad 
v^1_{-(|q+p|)} n'_{(-1)} u^1_{-(p)} n_{(-1)},
\big(n'_{(0)} \cdot_\enne (u^1_{+-} n_{(0)})\ract \pl u^1_{++}\big) v^1_+\Big)
\\ 
&\overset{\rmref{viaappia}, \rmref{circolodegliartisti}}{=}  
\Big(v^1_{-(1)}v^2, \ldots, v^1_{-(q-1)}v^q, 
v^1_{-(q)}n'_{(-p-1)}u^1_{-(1)}u^{2}, 
\ldots, v^1_{-(|q+p|-1)}n'_{(-2)}u^1_{-(p-1)}u^p,  
\\
&\qquad \qquad \qquad 
v^1_{-(|q+p|)} n'_{(-1)} u^1_{-(p)} n_{(-1)}, \big(v^1_{+-}\big(n'_{(-1)}(n_{(0)}u^1_+) \cdot_\enne n'_{(0)}\big)\big) \ract \pl v^1_{++} \Big)
\end{split}
\end{equation*}
\begin{equation*}
\begin{split}
&\overset{\scriptscriptstyle\rmref{Sch5}}{=}  
\Big(v^1_{-(1)}v^2, \ldots, v^1_{-(q-1)}v^q, 
v^1_{-(q)}n'_{(-p-1)}u^1_{-(1)}u^{2}, 
\ldots, v^1_{-(|q+p|-1)}n'_{(-2)}u^1_{-(p-1)}u^p,  
\\
&\qquad \qquad \qquad 
v^1_{-(|q+p|)} n'_{(-1)} u^1_{-(p)} n_{(-1)}, \big(v^1_{-(q+p)}\big(n'_{(-1)}(n_{(0)}u^1_+) \cdot_\enne n'_{(0)}\big)\big) \ract \pl v^1_{+} \Big)
\\
&\overset{\scriptscriptstyle\rmref{zilvesta}, \rmref{bilet}}{=}  
\Big(v^1_{-(1)}v^2, \ldots, v^1_{-(q-1)}v^q, 
v^1_{-(q)}n'_{(-p-1)}u^1_{-(1)}u^{2}, 
\ldots, v^1_{-(|q+p|-1)}n'_{(-2)}u^1_{-(p-1)}u^p,  
\\
&\qquad \qquad \qquad 
v^1_{-(|q+p|)} n'_{(-1)} u^1_{-(p)} n_{(-1)}, (v^1_{-(q+p)}n'_{(-1)}(n_{(0)}u^1_+)) \cdot_\enne (v^1_{-(q+p+1)}  n'_{(0)}) \ract \pl v^1_{+} \Big)
\\
&\overset{\scriptscriptstyle\rmref{Sch5}, \rmref{viaappia}}{=}  
\Big(v^1_{-(1)}v^2, \ldots, v^1_{-(q-1)}v^q, 
v^1_{-(q)}n'_{(-p-1)}u^1_{-(1)}u^{2}, 
\ldots, v^1_{-(|q+p|-1)}n'_{(-2)}u^1_{-(p-1)}u^p,  
\\
&\qquad \qquad \qquad 
v^1_{-(|q+p|)} n'_{(-1)} u^1_{-(p)} n_{(-1)}, \big(v^1_{-(q+p)}n'_{(-1)}(n_{(0)}u^1_+) \big) \cdot_\enne n'_{(0)}v^1_+\Big)
\\
&\overset{\scriptscriptstyle\rmref{maxdudler2}}{=}  
(v^1_{-(1)}v^2, \ldots, v^1_{-(q-1)} v^q, v^1_{-(q)} n'_{(-1)}, n'_{(0)}v^1_{+} ) \\
& \qquad \qquad 
\circ_q 
(u^1_{-(1)}u^2, \ldots, u^1_{-(p-1)} u^p, u^1_{-(p)} n_{(-1)}, n_{(0)}u^1_{+})
\\
&\overset{\scriptscriptstyle\rmref{anightinpyongyang}}{=}  
\tau(v^1, \ldots, v^q, n') \circ_q \tau(u^1, \ldots, u^p, n),
\end{split}
\end{equation*}
\end{footnotesize}
which concludes the proof of the theorem.
\end{proof}

By Theorem \ref{holl} one then immediately has:

\begin{corollary}
\label{clairefontaine2a}
With the assumptions of Theorem \ref{clairefontaine2} on $N$ and $A$, the cohomology groups $H^\bull_{\rm co}(U,N)$ carry the structure of a Batalin-Vilkovisky algebra.
In particular, if  
$U_\ract$ is flat as a right $A$-module, then $\Cotor^\bull_U(A,N)$ is a Batalin-Vilkovisky algebra.
\end{corollary}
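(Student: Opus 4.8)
The plan is to obtain the corollary as a direct application of Menichi's Theorem~\ref{holl} to the cyclic operad with multiplication furnished by Theorem~\ref{clairefontaine2}, so that the only remaining work is to check that the hypotheses match and to identify the resulting cohomology with $H^\bull_{\rm co}(U,N)$, and in the flat case with $\Cotor^\bull_U(A,N)$. First I would record that ``the assumptions of Theorem~\ref{clairefontaine2} on $N$ and $A$'' are precisely the hypotheses of that theorem: $N$ is a braided commutative Yetter--Drinfel'd algebra over the left Hopf algebroid $U$, the base $A$ carries an $\Ae$-balanced right $U$-action which is anti Yetter--Drinfel'd for the canonical coaction~\rmref{stabatmater2}, and $N$ equipped with the induced right action~\rmref{viaappia} is a \emph{stable} anti Yetter--Drinfel'd module. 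Under these, Theorem~\ref{clairefontaine2} tells us that $\corm\bull$ --- with the operad structure~\rmref{maxdudler2}--\rmref{habemuspapam2} of Theorem~\ref{clairefontaine1b} and the para-cocyclic operator of Proposition~\ref{kamille} --- is a cyclic operad with multiplication; it is here that stability of $N$ is consumed, being exactly what makes the para-cocyclic module cocyclic, that is, what yields $\tau_n^{n+1}=\id$.

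Next I would invoke Theorem~\ref{holl}: any cyclic operad with multiplication produces a cocyclic $k$-module for which the cohomology of the underlying cosimplicial module is a Gerstenhaber algebra --- with cup product~\rmref{cupco}, bracket~\rmref{zugangskarte}, and differential $\gd=\{\mu,\cdot\}$ --- and for which the associated cyclic differential $\BB$ of~\rmref{extra} is a square-zero operator of degree $-1$ generating this bracket in the sense of Definition~\ref{golfoaranci1}(ii). Applied to $\corm\bull$, the cocyclic module thus attached is $\corm\bull$ itself with the cofaces~\rmref{anightinpyongyang} and the rotation~\rmref{casadilivia}, and its cosimplicial differential, which by construction is $\{\mu,\cdot\}$, coincides with the coboundary~\rmref{appolloni2} defining $H^\bull_{\rm co}(U,N)$, as observed right after Theorem~\ref{clairefontaine1b}. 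Hence the cohomology in question is exactly $H^\bull_{\rm co}(U,N)$ of Definition~\ref{avviso}, carrying the Gerstenhaber structure of Corollary~\ref{capodistria}, while $\BB$ descends to the normalised complex $\bar C^\bull_{\rm co}(U,N)$ as recalled in~\S\ref{prelim}. Therefore $H^\bull_{\rm co}(U,N)$ is a Batalin--Vilkovisky algebra. The single point deserving a word of justification is that the Gerstenhaber structure entering the BV identity is the same as the one of Corollary~\ref{capodistria}: both are extracted from the \emph{single} operad-with-multiplication structure of Theorem~\ref{clairefontaine1b}, so there is nothing to reconcile.

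Finally, for the last sentence: when $U_\ract$ is flat as a right $A$-module, the canonical identification~\rmref{cotor} is an isomorphism of graded $k$-modules $H^\bull_{\rm co}(U,N)\simeq\Cotor^\bull_U(A,N)$, and transporting $\smallsmile$, $\{\cdot,\cdot\}$ and $\BB$ along it endows $\Cotor^\bull_U(A,N)$ with a Batalin--Vilkovisky algebra structure. Since the laborious part --- the verification of the cyclic-operad axioms~\rmref{superfluorescent1} and~\rmref{superfluorescent2} --- has already been carried out inside the proof of Theorem~\ref{clairefontaine2}, no genuine obstacle remains at the level of the corollary itself; the only care required is the bookkeeping that aligns the hypotheses and the output data of the two cited theorems, together with the remark above that the two Gerstenhaber structures literally agree.
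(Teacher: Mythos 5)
Your proposal is correct and follows exactly the paper's argument: the corollary is obtained by applying Menichi's Theorem~\ref{holl} to the cyclic operad with multiplication produced by Theorem~\ref{clairefontaine2}, with the flat case handled via the identification~\rmref{cotor}. The additional bookkeeping you supply (matching hypotheses, agreement of the Gerstenhaber structures, transport of structure along the isomorphism) is all implicit in the paper's one-line deduction.
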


\subsection{Maps of Gerstenhaber and Batalin-Vilkovisky algebras}
In this section, we discuss how the Batalin-Vilkovisky algebra $\Cotor^\bull_{V\!L}(A,A_\pl)$ over the universal enveloping algebra $V\!L$ of a Lie-Rinehart algebra $(A,L)$ as in Example \ref{castelliromani} is related to the classical Batalin-Vilkovisky algebra structure on the exterior algebra $\bigwedge^\bull_\ahha \! L$ as discussed in \cite{Hue:LRAGAABVA}.

\subsubsection{The generalised Schouten bracket}
For an arbitrary 
Gerstenhaber algebra $V^\bull$, the pair $(V^0, V^1)$ of its degree zero and degree one part forms a Lie-Rinehart algebra \cite[p.~67]{GerSch:ABQGAAD}. The forgetful functor $V^\bull \to (V^0, V^1)$ from Gerstenhaber algebras to Lie-Rinehart algebras has as a left adjoint (see \cite[Thm.~5]{GerSch:ABQGAAD}) given by the map
$(V^0,V^1) \to \bigwedge^\bull_{\scriptscriptstyle V^0} \! V^1$, where the exterior algebra $\bigwedge^\bull_{\scriptscriptstyle V^0} \! V^1$ of $V^1$ over $V^0$ is equipped with the Schouten
bracket:
\begin{footnotesize}
\begin{equation}
\label{labicana0}
\begin{split}
&[X_1 \wedge  \cdots \wedge X_p, Y_1 \wedge \cdots \wedge Y_q] \\
&= \sum^p_{i=1} \sum^q_{j=1} (-1)^{i+j + |p||q|} [X_i, Y_j] 
\wedge X_1 \wedge \cdots \wedge \widehat{X}_i \wedge \cdots \wedge X_p  
\wedge Y_1 \wedge \cdots \wedge \widehat{Y}_j \wedge \cdots \wedge Y_q.  
\end{split}
\end{equation}
\end{footnotesize}
Here the symbol $\widehat{\phantom{x}}$ denotes omission, as usual. Correspondingly, for every Gerstenhaber algebra $(V^\bull, \smallsmile)$ there is a universal map of Gerstenhaber algebras,
\begin{equation}
\label{labicana}
\textstyle\bigwedge^\bull_{\scriptscriptstyle V^0} \! V^1 \to V^\bull, \quad 
 X_1 \wedge \cdots \wedge X_p \mapsto  X_1 \smallsmile \cdots \smallsmile X_p.  
\end{equation}

\subsubsection{Batalin-Vilkovisky algebra structures and Lie-Rinehart algebras}
Assume for the rest of this section that $(A,L)$ is a Lie-Rinehart algebra in which $L$ is projective as an $A$-module. 
By a direct computation or by applying \cite[Thm.~2.13]{KowPos:TCTOHA} to cocommutative left bialgebroids, one has
\begin{equation*}
\begin{array}{rcl}
\Cotor^0_{V\!L}(A,A) &=& A,   \\
\Cotor^1_{V\!L}(A,A) &=& P(V\!L) = L,  
\end{array}
\end{equation*}
where $P(V\!L)$ denotes the set of primitive elements of $V\!L$, and where the last equation in the second line is a consequence of the Milnor-Moore theorem for cocommutative bialgebroids (see, {\em e.g.}, \cite{MoeMrc:OTUEAOALA}).
Since $\Cotor^\bull_{V\!L}(A,A)$ is a Gerstenhaber algebra as seen in Corollary \ref{clairefontaine2a}, 
one consequently has a canonical morphism $\textstyle\bigwedge^\bull_\ahha \! L \to \Cotor^\bull_{V\!L}(A,A)$ of Gerstenhaber algebras as generally given by the universal map \rmref{labicana}.

On the other hand, in \cite{Hue:LRAGAABVA} 
it is shown that there is a bijective correspondence between right $V\!L$-module structures on $A$ and operators of square zero that generate the Schouten bracket \rmref{labicana0}: if $(a,u) \mapsto a \Yleft u$ for all $u \in V\!L$ and $a \in A$ is a right 
$V\!L$-action on $A$ and 
$\pl: V\!L \to A, \ \pl u := 1_\ahha \Yleft u$ 
its associated generalised right character analogous to \rmref{rosti}, the map 
$b_L:\textstyle\bigwedge^\bull_\ahha \! L\to\textstyle\bigwedge^{\bull-1}_\ahha \! L$ 
defined by
\begin{equation*}
\begin{split}
b_L(X_1\wedge\cdots\wedge X_n):=&\sum_{i=1}^n(-1)^{i+1} 
\pl(X_i) X_1\wedge\cdots\wedge\hat{X}_i\wedge\cdots\wedge X_n\\
&+\sum_{i<j}(-1)^{i+j}[X_i,X_j]\wedge X_1\wedge\cdots\wedge\hat{X}_i
\wedge\cdots\wedge\hat{X}_j\wedge\cdots\wedge X_n
\end{split}
\end{equation*}
generates the Schouten bracket \rmref{labicana0} and has the property $b_L^2 = 0$, see \cite[Thm.~1]{Hue:LRAGAABVA}.

The following theorem is a generalisation of \cite[Prop.~65 \& Thm.~66]{Men:CMCMIALAM}
from Lie algebras to Lie-Rinehart algebras (or Lie algebroids):

\begin{theorem}
Let $\Q \subseteq k$ and $(A,L)$ be a Lie-Rinehart algebra with $L$ projective as an $A$-module.
Then the universal map $\bigwedge_\ahha^\bull \! L \to \Cotor_{V\!L}^\bull(A,A)$ of Gerstenhaber algebras arising from \rmref{labicana} is an isomorphism. 
If $A$ is moreover a right $V\!L$-module, then this morphism is a map of Batalin-Vilkovisky algebras.
\end{theorem}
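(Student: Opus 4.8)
The statement has two halves: that the universal map $f\colon\bigwedge_\ahha^\bull\!L\to\Cotor_{V\!L}^\bull(A,A)$ of Gerstenhaber algebras is bijective, and that, once $A$ is a right $V\!L$-module, $f$ additionally intertwines Huebschmann's operator $b_L$ with the Batalin--Vilkovisky differential $\BB$ on $\Cotor_{V\!L}^\bull(A,A)$ provided by Corollary~\ref{clairefontaine2a}. I would treat the two halves in this order; the first reproves the opening assertion and, for $A=k$, recovers \cite[Prop.~65]{Men:CMCMIALAM}, the second its Thm.~66.

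\emph{First half.} The plan is to compute $\Cotor_{V\!L}^\bull(A,A)$ outright. Since $L$ is $A$-projective, Rinehart's generalised PBW theorem \cite[Thm.~3.1]{Rin:DFOGCA} makes $V\!L$ flat over $A$, so by \rmref{cotor} one may work with the explicit cobar complex computing $H^\bull_{\rm co}(V\!L,A)$. Over $\Q\subseteq k$ the symmetrisation map is an isomorphism $\mathrm{Sym}_\ahha L\xrightarrow{\ \sim\ }V\!L$ of $A$-coalgebras (PBW for the underlying $A$-modules; cocommutativity together with $\Q\subseteq k$ gives compatibility with the coproducts, as a short check on generators shows; alternatively one may filter the cobar complex by the PBW filtration and run the associated spectral sequence, whose first page is the cobar cohomology of $\mathrm{Sym}_\ahha L$). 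Koszul duality between the symmetric and the exterior algebra --- valid over $\Q$ and, $L$ being projective over the commutative ring $A$, reducible locally to the free case --- then produces an isomorphism of graded $A$-algebras $\bigwedge_\ahha^\bull\!L\cong\Cotor_{V\!L}^\bull(A,A)$ (with no residual differential), which is the identity in degrees $0$ and $1$, where the two sides are $A$ and $L=P(V\!L)$, cf.\ \cite{MoeMrc:OTUEAOALA}. Since $\bigwedge_\ahha^\bull\!L$ is generated as an $A$-algebra in degree $1$ and $f$ is a morphism of graded algebras that is the identity in degrees $\le 1$ (universal property \rmref{labicana}), the composite of $f$ with the inverse of this isomorphism is a graded $A$-algebra endomorphism of $\bigwedge_\ahha^\bull\!L$ equal to the identity in degrees $\le 1$, hence the identity; therefore $f$ coincides with that isomorphism and is bijective. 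Being a morphism of Gerstenhaber algebras, it is then an isomorphism of Gerstenhaber algebras, and in particular carries the Schouten bracket \rmref{labicana0} to the bracket on $\Cotor_{V\!L}^\bull(A,A)$.

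\emph{Second half.} First one verifies that $N:=A$ satisfies the hypotheses of Theorem~\ref{clairefontaine2}: $A$ is always a braided commutative Yetter--Drinfel'd algebra over $V\!L$ (the comment following Definition~\ref{fiocchi}), and for the cocommutative left Hopf algebroid $V\!L$ the anti Yetter--Drinfel'd condition \rmref{huhomezone2} and the stability of $A$ --- equipped with its right action and the canonical coaction \rmref{stabatmater2} --- reduce, via the translation map \rmref{rashomon2}, to flatness of the right connection, i.e.\ to $A$ being a right $V\!L$-module, plus a one-line computation. Corollary~\ref{clairefontaine2a} then endows $\Cotor_{V\!L}^\bull(A,A)$ with a Batalin--Vilkovisky structure with generator $\BB$. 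Transported along the isomorphism $f$ of the first half, $\BB$ becomes a square-zero operator of degree $-1$ on $\bigwedge_\ahha^\bull\!L$ generating the Schouten bracket; by \cite[Thm.~1]{Hue:LRAGAABVA} such operators are in bijection with right $V\!L$-module structures on $A$, and since a right $V\!L$-action on $A$ is recovered from its character $X\mapsto 1_\ahha\Yleft X$ together with the Lie--Rinehart bracket, it suffices to show that $\BB$ and $b_L$ agree on $\Cotor^1_{V\!L}(A,A)=L$. This is a short direct computation: evaluating $\BB=\sss_{-1}\cN$ from \rmref{extra} on a degree-one (normalised) cochain, using the para-cocyclic operator \rmref{casadilivia} specialised to $N=A$ through the induced right action \rmref{viaappia} and the character \rmref{rosti}, yields $\BB(X)=\pl(X)=1_\ahha\Yleft X$ for $X\in L$ --- exactly the leading term of $b_L$. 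Hence $\BB=b_L$, and $f$ is a map of Batalin--Vilkovisky algebras.

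\emph{Main obstacle.} The delicate point is in the first half: carrying out the Koszul-duality computation of $\Cotor_{V\!L}^\bull(A,A)$ over a base ring with $L$ merely projective (classical in spirit, but requiring care with (co)completions and base change), and running the ``generated in degree $1$'' argument cleanly so as to upgrade the surjectivity of $f$ to bijectivity without any finiteness hypothesis on $L$. In the second half the only non-formal input is the chain-level evaluation of the Connes operator $\BB$ on $\Cotor^1$ through the identification $f$; everything else is routine once the hypotheses of Theorem~\ref{clairefontaine2} have been checked for $N=A$.
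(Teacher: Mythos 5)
Your argument is correct in outline but follows a genuinely different path from the paper's. The paper disposes of both halves by citing \cite[Thm.~3.13]{KowPos:TCTOHA}: the antisymmetrisation map $\mathrm{Alt}$ is there shown to be a morphism of mixed complexes $\left(\bigwedge^\bull_\ahha \! L, 0, b_L\right) \to \left(C^\bull_{\rm co}(V\!L), \gb, B\right)$ inducing an isomorphism on cohomology, and since for $N=A$ the cup product is just concatenation of tensors, the universal map \rmref{labicana} agrees with $\mathrm{Alt}$ on cohomology; the identity $\mathrm{Alt}\circ b_L = B\circ\mathrm{Alt}$, already contained in the cited statement, then yields the Batalin--Vilkovisky compatibility in all degrees at once. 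You instead (a) recompute $\Cotor_{V\!L}^\bull(A,A)$ from scratch and upgrade the universal map to a bijection by the ``generated in degree one'' argument, and (b) deduce $\BB=b_L$ from the uniqueness half of Huebschmann's correspondence plus a single degree-one evaluation of the Connes operator. Step (b) is a genuine economy: since a square-zero degree $-1$ generator of the Schouten bracket vanishing on $A$ corresponds to a right $V\!L$-module structure on $A$, and such a structure is determined by its character on $L$, the all-degrees chain-level intertwining is replaced by the single check $\BB(X)=\pl(X)$ for $X\in L$.

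One step of (a) as written is wrong, however: for a Lie--Rinehart algebra with nontrivial anchor the symmetrisation $X_1\odot\cdots\odot X_n\mapsto \frac{1}{n!}\sum_{\gs}X_{\gs(1)}\cdots X_{\gs(n)}$ is \emph{not} well defined on $\mathrm{Sym}_\ahha L$: already for $aX\odot Y$ versus $X\odot aY$ the two images in $V\!L$ differ by $\frac12\big(Y(a)X-X(a)Y\big)$, so there is no canonical coalgebra isomorphism $\mathrm{Sym}_\ahha L\simeq V\!L$ without choosing a connection. Your parenthetical fallback --- filtering the cobar complex by the PBW filtration and identifying the first page with the cobar cohomology of $\mathrm{Sym}_\ahha L\simeq \mathrm{gr}(V\!L)$, where Koszul duality over $\Q\subseteq k$ applies --- is the correct repair, and is essentially how the HKR-type theorem the paper cites is proved. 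With that substitution both halves go through.
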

\begin{proof}
This follows at once by using \rmref{cotor} along with applying \cite[Thm.~3.13]{KowPos:TCTOHA}:
there it is proven that
the antisymmetrisation map
\begin{equation}
\label{luzzi}
X_1 \wedge \cdots \wedge X_n \mapsto
\frac{1}{n!}\sum_{\gs \in S_n} (-1)^\gs (X_{\gs(1)}, \ldots, X_{\gs(n)})
\end{equation}
defines a morphism of
mixed complexes
\begin{equation*}
{\rm Alt}:\left( \textstyle\bigwedge^\bull_\ahha \! L,0,b_L\right)\to
\left(C^\bull_{\rm co}(V\!L),\gb,B\right),
\end{equation*}
where the right hand side 
refers to the complex defined by \rmref{anightinpyongyang} together with the differential \rmref{appolloni2} and the analogue of \rmref{extra} for cocyclic modules. On cohomology, this morphism induces 
a natural isomorphism 
between $\textstyle\bigwedge^\bull_\ahha \! L$  and $\Cotor^\bull_{V\!L}(A,A)$. Since the universal map \rmref{labicana} of Gerstenhaber algebras coincides, when descending to cohomology, with the antisymmetrisation map \rmref{luzzi} as the cup product \rmref{cupco} becomes simply the tensor product for $N=A$, the first claim follows. The second statement follows by the first along with the equation ${\rm Alt} \circ b_L = B \circ {\rm Alt}$ mentioned just above. 
\end{proof}

\section{Poisson Bialgebroids and their (co)homology}
\label{vipiteno}

Definition \ref{shakta} below connects the idea of a Poisson structure to that of a distinguished element, {\em i.e.}, can be understood as a generalised operad multiplication for the operad $C^\bull(U,A)$ as given in \S\ref{palermo}. The examples in this section will show that this approach conceptually unites, for example, Hochschild with Poisson homology (resp.\ cyclic homology with cyclic Poisson homology).

\subsection{The noncommutative calculus structure on left Hopf algebroids}
\label{nmd}
For later use, we need to recall from \cite[\S4]{KowKra:BVSOEAT}
the Hopf algebroid generalisation of the {\em cap product} and
\emph{Lie derivative} along with its properties; see [{\em loc.~cit.}] for all details and proofs in this subsection. These operators together with the cyclic differential $B$ from \rmref{extra} form a {\em noncommutative differential calculus} in the sense of Nest-Tsygan (see \cite{NesTsy:OTCROAA}, {\em cf.} also \cite{TamTsy:NCDCHBVAAFC}), which was the main point in \cite{KowKra:BVSOEAT}.

\begin{definition}[\cite{KowKra:BVSOEAT}]
Let $U$ be a left Hopf algebroid and $M$ a right $U$-module left $U$-comodule such that the induced left $A$-actions on $M$ coincide, and let  $\varphi \in C^p(U,A)$ be a $p$-cochain.
\begin{enumerate}
\compactlist{99}
\item
The {\em cap product} $\iota_\gvf := \gvf \smallfrown \cdot$ of  
$\varphi$ with 
$(m, x) \in C_n(U,M)$ is defined by
\begin{equation*}
\label{alles4}
\gvf \smallfrown (m,x) :=  (m, u^1, \ldots, u^{n-p-1}, 
        \varphi(u^{n-|p|}, \ldots, u^n) \blact u^{n-p}),
\end{equation*}
where as in Proposition~\ref{kamille} the abbreviation 
$(m,x) :=(m, u^1, \ldots, u^n)$ 
is used.
\item
The {\em Lie derivative} 
$$
        \lie_\varphi : C_n(U,M) \rightarrow 
        C_{n-|p|}(U,M)
$$
along $\gvf$ in degree $n$ with $p < n+1$ is defined to be 
\begin{equation}
\label{messagedenoelauxenfantsdefrance2}
        \lie_\varphi := 
        \sum^{n-|p|}_{i=1} 
        (-1)^{\theta^{n,p}_{i}} 
        \ttt^{n - |p| - i} \, \DD'_\varphi \, \ttt^{i+p} 
        + 
        \sum^{p}_{i=1} 
        (-1)^{\xi^{n,p}_{i}} 
        \ttt^{n-|p|} \, \DD'_\varphi \, \ttt^i,
\end{equation}
where the signs are given by
$        \theta^{n,p}_{i} 
        := |p|(n-|i|) 
$
 and        
$
\xi^{n,p}_{i} 
        := n|i| + |p|.
$
In case $p = n+1$, set
$$
        \lie_\varphi := 
        (-1)^{|p|}\iota_\varphi \, \BB,
$$
and for $p > n+1$, we define 
$\lie_\varphi := 0$. 
\end{enumerate}
\end{definition}
To simplify future reference, we call the first sum in \rmref{messagedenoelauxenfantsdefrance2} the {\em untwisted part} and the second sum the {\em twisted part} of $\lie_\gvf$. As a shorthand, we will write
\begin{equation}
\label{travaux}
\lie_\gvf = \lie^{\scriptscriptstyle{\rm untw}}_\gvf + \lie^{\scriptscriptstyle{\rm tw}}_\gvf. 
\end{equation}

We list a few useful facts about the triple of operators $(\lie_\gvf, \iota_\gvf, B)$; 
for simplicity, let $M$ be an SaYD module in the following theorem and let $[.,.]$ denote the graded commutator in all what follows.

\begin{theorem}[\cite{KowKra:BVSOEAT}]
\label{feinefuellhaltertinte}
Let $M$ be an SaYD module. Then the  triple $(C_\bull(U,M),\bb,\smallfrown)$ is a left DG module over 
$(C^\bull(U,A),\delta,\smallsmile)$, {\em i.e.}, for any cochain $\varphi \in C^\bull(U,A)$ 
\begin{equation}
\label{mulhouse2}
        \iota_\varphi \, \iota_\psi = \iota_{\varphi \smallsmile \psi} 
      \quad \mbox{and} \quad  [\bb, \iota_\varphi] = \iota_{\gd\varphi}
\end{equation} 
holds, where $\smallsmile$ is given by \rmref{cupco}.
On the other hand, the Lie derivative $\lie$ defines a DG 
Lie algebra representation of $(C^\bull(U,M)[1], \{.,.\})$: for another cochain $\psi \in C^\bull(U,A)$, 
we have, as operators on $C_\bull(U,M)$,
\begin{equation}
\label{weimar}
[\lie_\varphi, \lie_\psi] = \lie_{\{\varphi, \psi\}},
\end{equation}
where the bracket on the right hand side is the Gerstenhaber bracket
\rmref{zugangskarte}.
Moreover,
\begin{equation}
\label{alles1}
[\bb, \lie_\varphi] + \lie_{\gd\varphi}= 0. 
\end{equation}
If furthermore 
$\varphi \in C^p(U,A)$, $\psi \in C^q(U,A)$ are any two co{\em cycles}, 
the induced maps 
$$
\lie_\varphi: H_{\bull}(U,M)  \to H_{\bull-|p|}(U,M) \quad \mbox{and} \quad \iota_\psi:  H_{\bull}(U,M)  \to H_{\bull-q}(U,M) 
$$
are well-defined operators that turn $H_\bull(U,M)$ into a module over the Gerstenhaber algebra $H^\bull(U,A)$, that is, they
satisfy
\begin{equation*}
\label{radicale1}
[\iota_\psi, \lie_\varphi] = \iota_{\{\psi, \gvf\}}.
\end{equation*}
Finally, for a cocycle $\varphi \in \bar C^p(U,A)$ the {\em Cartan-Rinehart homotopy formula}
$$
\lie_\gvf = [B,\iota_\gvf]
$$
holds on $H_{\bull}(U,M)$.
\end{theorem}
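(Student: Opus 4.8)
The plan is to verify every assertion directly on the explicit (co)chain complexes of Section~\ref{prelim} and \S\ref{palermo}, passing to (co)homology only at the very end. The identity $\iota_\varphi\iota_\psi = \iota_{\varphi\smallsmile\psi}$ of \rmref{mulhouse2} is obtained by substituting the definition of the cap product into itself: applied to $(m,u^1,\ldots,u^n)\in C_n(U,M)$ with $\varphi\in C^p(U,A)$ and $\psi\in C^q(U,A)$, the operator $\iota_\psi$ contracts the last $q$ slots and multiplies $\psi(\ldots)$ onto $u^{n-q}$ from the left, after which $\iota_\varphi$ contracts the new last $p$ slots; using the $A$-bimodule relations on $M$ and that the $U$-action on $A$ factors through the counit --- so that for $N=A$ the cup product \rmref{cupco} is the explicit formula recorded in \S\ref{palermo} --- one lands precisely on $\iota_{\varphi\smallsmile\psi}(m,x)$. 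For $[\bb,\iota_\varphi]=\iota_{\gd\varphi}$ one expands $\bb$ through \rmref{appolloni1} and $\gd$ through \rmref{spaetkauf}: the faces $\dd_i$ acting outside the block contracted by $\varphi$ cancel between $\bb\iota_\varphi$ and $\pm\iota_\varphi\bb$, the extreme faces $\dd_0,\dd_n$ reproduce the counit term of $\gd\varphi$, and contracting $\varphi$ against two adjacent multiplied slots yields the interior terms of $\gd\varphi$.

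\textbf{The Lie-derivative relations.} Equations \rmref{weimar} and \rmref{alles1} are the technical core. The strategy is to insert the definition \rmref{messagedenoelauxenfantsdefrance2}, split $\lie_\varphi=\lie^{\scriptscriptstyle{\rm untw}}_\varphi+\lie^{\scriptscriptstyle{\rm tw}}_\varphi$ as in \rmref{travaux}, and reduce every term to a word in the para-cyclic generator $\ttt$, the norm $\cN$, and the operators $\DD'_\varphi,\DD'_\psi$. One then repeatedly applies the simplicial and para-cyclic relations, the translation-map identities \rmref{Sch1}--\rmref{Sch9}, and stability $m_{(0)}m_{(-1)}=m$ (equivalently $\ttt^{n+1}=\id$ on $C_n(U,M)$), showing that $[\lie_\varphi,\lie_\psi]$ regroups into $\lie^{\scriptscriptstyle{\rm untw}}_{\{\varphi,\psi\}}+\lie^{\scriptscriptstyle{\rm tw}}_{\{\varphi,\psi\}}$ with the Gerstenhaber bracket \rmref{zugangskarte}, and that $[\bb,\lie_\varphi]$ collapses to $-\lie_{\gd\varphi}$ once $\gd$ is written as $\{\mu,\cdot\,\}$. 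I expect this book-keeping --- in particular matching the sign exponents $\theta^{n,p}_i,\xi^{n,p}_i$ against the $\ttt$-conjugations forced by \rmref{Sch4}--\rmref{Sch5} --- to be the main obstacle; it is the bialgebroid refinement over a noncommutative base of the Nest--Tsygan calculus identities, so the overall cancellation pattern is known in outline, but the twisted contributions require care.

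\textbf{Passage to homology.} Granting the chain-level identities, the remaining claims are formal. If $\gd\varphi=0$, then \rmref{alles1} gives $[\bb,\lie_\varphi]=0$, so $\lie_\varphi$ descends to $H_\bull(U,M)$, and $\iota_\psi$ descends likewise by \rmref{mulhouse2}; equations \rmref{mulhouse2} and \rmref{weimar} then express precisely that $H_\bull(U,M)$ becomes a module over the Gerstenhaber algebra $H^\bull(U,A)$. The relation $[\iota_\psi,\lie_\varphi]=\iota_{\{\psi,\varphi\}}$ on homology follows by applying the graded Jacobi identity to the triple $(\bb,\iota_\psi,\lie_\varphi)$ together with $[\bb,\iota_\psi]=\iota_{\gd\psi}=0$ and $[\bb,\lie_\varphi]=-\lie_{\gd\varphi}=0$, the exact correction terms dying on homology.

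\textbf{The Cartan--Rinehart homotopy formula.} For $\lie_\varphi=[B,\iota_\varphi]$ on $H_\bull(U,M)$ the plan is to exhibit an explicit chain homotopy. In analogy with $B=(\id-\ttt)\,\sss_{-1}\,\cN$ from \rmref{extra}, one builds an operator $\SSS_\varphi\colon C_n(U,M)\to C_{n-|p|+1}(U,M)$ out of the extra degeneracy $\sss_{-1}$, the norm $\cN$, and $\DD'_\varphi$, and establishes, on the normalised complex, an identity of the shape
\[
\lie_\varphi-[B,\iota_\varphi]\ =\ [\bb,\SSS_\varphi]\pm\SSS_{\gd\varphi}
\]
by the same kind of para-cyclic computation, using \rmref{Sch1}--\rmref{Sch9} and stability. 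When $\varphi$ is a cocycle the last term drops, and $[\bb,\SSS_\varphi]$ vanishes after passing to homology, which yields the formula. The delicate point is pinning down $\SSS_\varphi$ and its sign; this is forced by the requirement that for $U=\Ae$ as in Example~\ref{todi} one recover Rinehart's classical homotopy relating the Lie derivative, the cap product, and Connes' $B$ on Hochschild homology.
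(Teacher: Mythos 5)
First, a point of reference: this paper does not prove Theorem \ref{feinefuellhaltertinte} at all --- it is imported verbatim from \cite{KowKra:BVSOEAT}, and \S\ref{nmd} explicitly defers ``all details and proofs in this subsection'' to that source. So there is no in-paper argument to compare against; your proposal has to stand on its own, and for the most part it is a strategy sketch rather than a proof. The outline for \rmref{mulhouse2}, for \rmref{weimar}--\rmref{alles1} (direct computation with the para-cyclic relations, \rmref{Sch1}--\rmref{Sch9}, stability, and the split \rmref{travaux}), and for the homotopy formula (an operator $S_\gvf$ correcting $\iota_\gvf$ on the cyclic bicomplex, exactly as Remark \ref{hornet} records) is consistent with how these identities are actually established in \cite{KowKra:BVSOEAT}; but the genuinely hard sign and $\ttt$-conjugation bookkeeping that you yourself identify as ``the main obstacle'' is precisely what constitutes the proof there, and you do not carry it out.

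There is also one concrete logical gap. You claim that $[\iota_\psi,\lie_\gvf]=\iota_{\{\psi,\gvf\}}$ on homology ``follows by applying the graded Jacobi identity to the triple $(\bb,\iota_\psi,\lie_\gvf)$'' together with $[\bb,\iota_\psi]=\iota_{\gd\psi}=0$ and $[\bb,\lie_\gvf]=-\lie_{\gd\gvf}=0$. That Jacobi computation yields only
\begin{equation*}
[\bb,[\iota_\psi,\lie_\gvf]]=[[\bb,\iota_\psi],\lie_\gvf]\pm[\iota_\psi,[\bb,\lie_\gvf]]=0,
\end{equation*}
i.e.\ it shows that $[\iota_\psi,\lie_\gvf]$ is a chain map and hence descends to $H_\bull(U,M)$; it does not identify the descended operator with $\iota_{\{\psi,\gvf\}}$. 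The mixed Leibniz rule is an independent axiom of a noncommutative differential calculus in the sense of Nest--Tsygan and Tamarkin--Tsygan, not a formal consequence of the module structure \rmref{mulhouse2} and the representation property \rmref{weimar}; in \cite{KowKra:BVSOEAT} it requires its own chain-level verification (with its own homotopy correction term that dies on homology), of the same computational nature as the proof of \rmref{weimar}. Attempting instead to extract it from the Cartan--Rinehart formula via $[\iota_\psi,[B,\iota_\gvf]]$ also fails to close formally, since $[\iota_\psi,B]$ only recovers $\mp\lie_\psi$ on homology and the resulting identity merely relates $[\iota_\psi,\lie_\gvf]$ to $[\lie_\psi,\iota_\gvf]$. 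You need to either prove this identity directly on chains or cite it; as written, this step of the argument is missing.
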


With the help of the homotopy formula, we can easily 
prove
that for any cochain $\varphi \in \bar C^\bull(U,A)$ one has on the normalised complex $\bar 
C_\bull(U,M)$ 
\begin{equation}
\label{alles2}
        [\lie_\varphi, \BB] = 0.
\end{equation}

\begin{rem}
\label{hornet}
One can also obtain a homotopy formula more generally on the chain resp.\ cochain level. In this case, one has to apply a ``cyclic correction'' to the cap product $\iota_\gvf$ by an operator $S_\gvf: C_n(U,M) \to C_{n-p+2}(U,M)$ to take the full cyclic bicomplex into account; see \cite{KowKra:BVSOEAT} for all details (or \cite{Rin:DFOGCA, Get:CHFATGMCICH, NesTsy:OTCROAA} for the example of associative algebras). Setting $\mathbf{I}_\gvf := \iota_\gvf + S_\gvf$ for the ``cyclic cap product'' on the cyclic bicomplex and ${\bf B} := B + b$ for its differential, the Cartan-Rinehart homotopy formula for any cochain $\gvf \in \bar C^p(U,M)$ 
then reads
$$
\lie_\gvf = [{\bf B}, {\bf I}_\gvf] - {\bf I}_{\gd \gvf}.
$$
All the preceding statements can be moreover relaxed to the case where $M$ is merely a right $U$-module left $U$-comodule with compatible left $A$-actions instead of being an SaYD module. Since we shall not deal with this situation, we refer to \cite[\S4]{KowKra:BVSOEAT} 
for the details of this more general construction.
\end{rem}

\noindent {\it Special cases.}
We conclude this section by listing explicit expressions for the Lie derivative in two special situations: 
in case $M$ is SaYD, the Lie derivative along $\gvf \in C^p(U,A)$ can be written on $C_n(U,M)$ explicitly as 
\begin{footnotesize}
\begin{equation*}
\label{encarnit1}
\begin{split}
\lie_\gvf(m, x) = &\sum_{i=1}^{n-|p|} (-1)^{\theta^{n,p}_{i}} 
\big(m,u^1,\ldots, \DD_\gvf(u^i,\ldots, u^{i+|p|}), \ldots, u^n\big) \\
&\quad  + \sum^{|p|}_{i=0} (-1)^{\xi^{n,p}_{i+1}} \big(m_{(0)}u^1_{+(2)} \cdots u^{i}_{+(2)}, u^{i+1}_+, \ldots, u^{n-p+i}_+, \\
& \qquad \qquad \qquad \gvf(u^{n-|p|+i+1}_+, \ldots, u^n_+, u^n_- \cdots u^1_- m_{(-1)}, u^1_{+(1)}, \ldots, u^{i}_{+(1)}) \blact u^{n-|p|+i}_+\big).
\end{split}
\end{equation*}
\end{footnotesize}
In \S\ref{castropretorio} we will deal with a {\em commutative} left Hopf algebroid $U$. 
Unlike for general left Hopf algebroids, we have in this case canonical coefficients: 
here, the base algebra $M \! := \! A$ is automatically an SaYD module 
using the canonical left action \rmref{stabatmater1} as right action, along with the canonical left coaction \rmref{stabatmater2}. 
In this case, the Lie derivative simplifies to:
\begin{footnotesize}
\begin{equation}
\label{encarnita2}
\begin{split}
\lie_\gvf(x) &= \sum_{i=1}^{n-|p|} (-1)^{\theta^{n,p}_{i}} 
\big(u^1,\ldots, \DD_\gvf(u^i,\ldots, u^{i+|p|}), \ldots, u^n\big) \\
&\quad  + \sum^{|p|}_{i=0} (-1)^{\xi^{n,p}_{i+1}} \big(u^{i+1}_+, \ldots, u^{n-p+i}_+, 
\\& \qquad \qquad \qquad 
\gvf(u^{n-|p|+i+1}_+, \ldots, u^n_+, u^n_- \cdots u^1_-, u^1_{+}, \ldots, u^{i}_{+}) \blact u^{n-|p|+i}_+\big).
\end{split}
\end{equation}
\end{footnotesize}

\subsection{Poisson Bialgebroids}
\label{ronciglione}
Having recalled the technical machinery above, we are in a position to introduce the Poisson theory for bialgebroids:

\begin{definition}
\label{shakta}
A {\em (quasi-)triangular $r$-matrix} or {\em Poisson structure} for a left bialgebroid $U$ is a $2$-cocycle $\gt \in C^2(U,A)$ that fulfills 
\begin{equation}
\label{impregnante1}
\gt \circ_1 \gt = \gt \circ_2 \gt.
\end{equation}
A left bialgebroid $U$ is called {\em Poisson bialgebroid} if there is a triangular
$r$-matrix $\gt \in C^2(U,A)$. 
\end{definition}

Note that (if $k$ has characteristic different from two) Eq.~\rmref{impregnante1} is equivalent to 
\begin{equation}
\label{impregnante2}
\gt \bar\circ \gt = 0 = \{\gt, \gt\},
\end{equation}
as follows from the grading of the Gerstenhaber bracket. The terminology ``triangular $r$-matrix'' will be motivated in \S\ref{salice}.
One could, of course, also define Poisson structures for {\em right} bialgebroids but for shortage in terminology, we shall always mean left ones when speaking about Poisson structures.
At times, we denote a Poisson bialgebroid by $U^\gt$ if we want to underline the dependence of a certain construction or 
structure from the triangular $r$-matrix $\gt$.
Observe that every bialgebroid allows for at least one such triangular $r$-matrix given by the operad multiplication, {\em i.e.}, the distinguished element $\mu$ in \rmref{distinguished, I said 2} (for $M:=A$), which we will refer to as the {\em trivial} one.

\subsection{The Poisson bicomplex}
\label{fave}
Let $U$ be a Poisson bialgebroid with triangular $r$-matrix $\gt$ and $M$ an 
SaYD module.
Define
the operators
\begin{eqnarray}
\label{monti}
b^\gt: C_n(U,M) \to  C_{n-1}(U,M), & (m,x) & \mapsto - \cL_\gt(m,x), \\
\label{celio}
\gb^\gt: C^n(U,A) \to  C^{n+1}(U,A), & \gvf & \mapsto \{\gt, \gvf\}. 
\end{eqnarray}

These operators could be, of course, defined for any $2$-cochain but the crucial property here is:

\begin{lem}
If $\gt$ is a triangular $r$-matrix, one has
$$
b^\gt b^\gt = 0 \quad \hbox{as well as} \quad \gb^\gt \gb^\gt =0.
$$
\end{lem}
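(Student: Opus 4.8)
The plan is to derive both identities from properties of the Lie derivative and the Gerstenhaber bracket that are already recorded in the excerpt, rather than unwinding the explicit combinatorial formula for $\cL_\gt$.

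For the cochain side, $\gb^\gt\gb^\gt = 0$ is essentially formal. Since $\gt$ is a $2$-cocycle, $\gd\gt = \{\mu,\gt\} = 0$, and one has the graded Jacobi identity for the Gerstenhaber bracket on $C^\bull(U,A)[1]$: for $\gt \in C^2(U,A)$, i.e.\ of shifted degree $|2| = 1$, and any $\gvf$,
$$
\{\gt,\{\gt,\gvf\}\} = \{\{\gt,\gt\},\gvf\} \pm \{\gt,\{\gt,\gvf\}\},
$$
so that $2\{\gt,\{\gt,\gvf\}\} = \{\{\gt,\gt\},\gvf\}$. Because $\gt$ is triangular, $\{\gt,\gt\} = 0$ by \rmref{impregnante2}, hence (using $\Q \subseteq k$, or at least $2$ invertible, which the paper already invokes around \rmref{impregnante2}) $\gb^\gt\gb^\gt\gvf = \{\gt,\{\gt,\gvf\}\} = 0$. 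I would state the relevant graded Jacobi identity explicitly with the correct Koszul signs coming from the $[1]$-shift and cite it as a standard property of the Gerstenhaber bracket attached to an operad with multiplication (Theorem \ref{customerscopy} and the formulas \rmref{zugangskarte}, \rmref{cupco}).

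For the chain side, $b^\gt = -\cL_\gt$, so $b^\gt b^\gt = \cL_\gt\cL_\gt$. The key input is \rmref{weimar} from Theorem \ref{feinefuellhaltertinte}: $[\cL_\gvf,\cL_\psi] = \cL_{\{\gvf,\psi\}}$ as operators on $C_\bull(U,M)$. Taking $\gvf = \psi = \gt$, the graded commutator is $[\cL_\gt,\cL_\gt] = \cL_\gt\cL_\gt - (-1)^{|2||2|}\cL_\gt\cL_\gt = 2\,\cL_\gt\cL_\gt$, since $|2| = 1$ and the Lie derivative along a $2$-cochain has odd shifted degree. On the other hand $[\cL_\gt,\cL_\gt] = \cL_{\{\gt,\gt\}} = \cL_0 = 0$ by triangularity. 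Hence $2\,\cL_\gt\cL_\gt = 0$, and again invertibility of $2$ in $k$ gives $\cL_\gt\cL_\gt = 0$, i.e.\ $b^\gt b^\gt = 0$. I would be careful to note that \rmref{weimar} is stated for $M$ an SaYD module, which is exactly the standing hypothesis of this subsection, so it applies directly (and one could invoke Remark \ref{hornet} for the more general case if desired, but that is not needed here).

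The only genuine subtlety — hence the part I would spell out — is the characteristic-two issue: both arguments produce $2\,(\cdot) = 0$ and therefore require $2$ to be invertible in $k$ (or at least that $k$ have no $2$-torsion of the relevant kind). The paper already makes this kind of assumption explicit around \rmref{impregnante2} ("if $k$ has characteristic different from two"), so I would simply flag that the same hypothesis is in force here. Alternatively, one can avoid the division entirely by observing that $\cL_\gt\cL_\gt = \tfrac12[\cL_\gt,\cL_\gt]$ is not the cleanest route and instead verify directly from the explicit formula \rmref{encarnita2} that the ``square-zero'' holds on the nose; but that is a long computation and I would only sketch that it parallels the classical Hochschild/Poisson case, relegating it to the reader, and present the bracket-theoretic proof above as the main argument.
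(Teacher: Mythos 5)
Your proof is correct and takes essentially the same route as the paper's own (very terse) proof: the identity $b^\gt b^\gt=0$ from $\{\gt,\gt\}=0$ combined with $[\lie_\gvf,\lie_\psi]=\lie_{\{\gvf,\psi\}}$ of \rmref{weimar}, and $\gb^\gt\gb^\gt=0$ from $\{\gt,\gt\}=0$ and the graded Jacobi identity. Your explicit attention to the factor of $2$ and the characteristic-two hypothesis is a worthwhile elaboration, consistent with the caveat the paper states just after Definition \ref{shakta}.
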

\begin{proof}
The first equation is immediately seen with \rmref{impregnante2} and the property \rmref{weimar} of the Lie derivative. 
The second equation follows again from \rmref{impregnante2} and the graded Jacobi identity for the Gerstenhaber bracket.
\end{proof}

Hence $\big(C_\bull(U,M), b^\gt\big)$ and $\big(C^\bull(U,A), \gb^\gt\big)$ are chain resp.\ cochain complexes, and 
we can define:

\begin{definition}
The homology of $\big(C_\bull(U,M), b^\gt\big)$ will be called {\em Poisson homology} of the bialgebroid $U$ with values in $M$ 
and denoted by $H^\gt_\bull(U,M)$. 
In a similar way, the {\em Poisson cohomology} $H_\gt^\bull(U,A)$ with coefficients in the base algebra $A$ is the cohomology of $\big(C^\bull(U,A), \gb^\gt\big)$.
\end{definition}

\begin{rem}
In view of Theorem \ref{clairefontaine1a}, one could, for any braided commutative Yetter-Drinfel'd algebra $N$, also introduce {\em $N$-valued Poisson structures} on bialgebroids and consequently also consider the Poisson cohomology $H^\bull_\gt(U,N)$ with coefficients in such a braided commutative Yetter-Drinfel'd algebra $N$. Also, as already stated in Remark \ref{hornet}, one could relax the assumptions on $M$ as to be a right $U$-module left $U$-comodule with compatible left $A$-action and proceed in the spirit of \cite{KowKra:BVSOEAT}. To keep the exposition simple we shall refrain from pursuing both these generalisations, and leave them to a future project.
\end{rem}

As follows from \rmref{alles2}, one has for every triangular $r$-matrix $\gt$ the identity 
\begin{equation}
\label{trieste}
b^\gt B + B b^\gt = 0
\end{equation}
on the normalised complex $\bar C_\bull(U, M)$,
and therefore:

\begin{prop}
\label{c'est-a-dire}
The triple $\big(\bar C_\bull(U,M), b^\gt, B\big)$ forms a mixed complex.
\end{prop}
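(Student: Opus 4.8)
The plan is to verify the three defining properties of a mixed complex for the triple $\big(\bar C_\bull(U,M), b^\gt, B\big)$: namely that $b^\gt$ is a differential, that $B$ is a differential, and that $b^\gt$ and $B$ anticommute (in the homological convention $b^\gt B + B b^\gt = 0$). Two of these have already been established: the preceding Lemma gives $b^\gt b^\gt = 0$, and $B B = 0$ holds on every para-cyclic $k$-module by the general theory recalled around Eq.~\rmref{extra} (indeed $B$ is the Connes differential of the para-cocyclic structure on $C_\bull(U,M)$, which on the normalised complex is induced by $\sss_{-1}\cN$ and squares to zero). So the only genuine content of the proposition is the mixed-complex relation $b^\gt B + B b^\gt = 0$.

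First I would recall that $b^\gt = -\lie_\gt$ by definition \rmref{monti}, so the sought identity is equivalent to $[\lie_\gt, B] = 0$ as operators on $\bar C_\bull(U,M)$. But this is exactly the statement of \rmref{alles2}: for any cochain $\gvf \in \bar C^\bull(U,A)$ one has $[\lie_\gvf, B] = 0$ on the normalised complex. Applying this with $\gvf := \gt$ — which is a $2$-cochain and, being a triangular $r$-matrix, in particular a cocycle, hence lies in $\bar C^2(U,A)$ after passing to the normalised complex, or one simply uses that \rmref{alles2} is stated for arbitrary $\gvf \in \bar C^\bull(U,A)$ — we obtain $[\lie_\gt, B] = 0$, which is precisely \rmref{trieste}. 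Thus all three conditions are in place and the triple forms a mixed complex.

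Concretely, the write-up would be short: invoke the preceding Lemma for $(b^\gt)^2 = 0$, invoke the general vanishing $B^2 = 0$ on (normalised) para-cyclic modules, and invoke \rmref{alles2} (equivalently the displayed identity \rmref{trieste}) for the anticommutation $b^\gt B + B b^\gt = 0$; conclude that $\big(\bar C_\bull(U,M), b^\gt, B\big)$ satisfies the axioms of a mixed complex. The main (and essentially only) obstacle is that one must be slightly careful about the hypotheses under which \rmref{alles2} was derived: it was obtained via the Cartan--Rinehart homotopy formula $\lie_\gvf = [B, \iota_\gvf]$, which in Theorem \ref{feinefuellhaltertinte} is stated for $M$ an SaYD module and $\gvf$ a cocycle in $\bar C^p(U,A)$. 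Since in this section $M$ is assumed SaYD and $\gt$ is a triangular $r$-matrix (hence a $2$-cocycle), both hypotheses are met, so there is no real gap — the proof is a direct citation of the machinery already assembled.
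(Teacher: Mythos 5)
Your proposal is correct and follows exactly the route the paper takes: the paper derives the proposition in one line from the preceding Lemma ($b^\gt b^\gt = 0$), the general fact that $B$ squares to zero on the normalised complex of a para-cyclic module, and the anticommutation identity \rmref{trieste}, which is itself deduced from \rmref{alles2} applied to the $2$-cocycle $\gt$. Your added remark about checking the hypotheses of Theorem \ref{feinefuellhaltertinte} (that $M$ is SaYD and $\gt$ is a cocycle) is a sensible precaution that the paper leaves implicit.
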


This enables us to define:

\begin{definition}
We call the cyclic homology of the mixed complex $\big(\bar C_\bull(U,M), b^\gt, B\big)$, 
{\em i.e.}, the homology of its total complex, the {\em cyclic Poisson homology}
of the Poisson bialgebroid $U^\gt$ and denote it by $HC^\gt_\bull(U,M)$.
\end{definition}

\begin{example} ({\em Hochschild and cyclic homology for left Hopf algebroids and associative algebras})
\label{erstes}
Let $\gt:=\mu$ be the distinguished element
\rmref{distinguished, I said 2}, {\em i.e.}, the trivial triangular $r$-matrix. Then, as shown in 
\cite[Eq.~(3.11) \& Lemma 4.18]{KowKra:BVSOEAT}, one has
\begin{equation}
\label{siena}
\begin{array}{rcccl}
b^\mu &=& -\lie_\mu &=& b, \\
\gb^\mu &=& \{\mu, \cdot\} &=& \gb,
\end{array}
\end{equation} 
where $b$ and $\gb$ are as in \rmref{appolloni1} and \rmref{appolloni2}. 
Hence, one reproduces the simplicial ({\em i.e.}, Hochschild) homology (with coefficients in $M$) resp.\ cohomology (with coefficients in $A$) for the left bialgebroid $U$, and the mixed complex related of Proposition \ref{c'est-a-dire} is the one arising from the cyclic module in the first part of Proposition \ref{kamille}.

In particular, for the case $U = \Ae$, this leads to the well-known Hochschild (co)homology 
of an associative algebra $A$, and the relation to the operators $b^\mu$ and $\gb^\mu$ for this case were already noticed in, {\em e.g.}, \cite{Tsy:FCFC, GerSch:ABQGAAD}. 
In both cases, {\em i.e.}, for general $U$ or in the example $U=\Ae$, the cyclic Poisson homology is then simply the cyclic homology of $U$ resp.\ the classical cyclic homology for associative algebras (cf.~\cite{Con:NCDG, FeiTsy:AKT}).
\end{example}

\begin{example} ({\em Poisson (co)homology for associative algebras})
\label{zweites}
In
the case $U = \Ae$ for a not necessarily commutative associative $k$-algebra $A$, a triangular 
$r$-matrix $\pi \in C^2(\Ae,A)$ can, as mentioned before, be seen as a conventional Hochschild $2$-cocycle $\tilde\pi$ by means of the isomorphism 
$ C^\bull(\Ae, A) \to C^\bull(A,A)$
in \rmref{yamamay}. 
Such a Hochschild $2$-cocycle $\tilde\pi$ with the property $\{\tilde{\pi}, \tilde{\pi}\}_\ggii = 0$ was named {\em noncommutative Poisson structure} in \cite{Xu:NCPA}, where we write $\{.,.\}_\ggii$ for the classical Gerstenhaber structure on Hochschild cohomology as mentioned 
in Example \ref{figlieditalia}.
 Using \rmref{yamamay} again, one obtains
$$
\widetilde{\gb^\pi(f)} = \tilde{\gb}^{\tilde{\pi}}(\tilde{f}), \quad \forall f \in C^\bull(\Ae, A),
$$
where $\tilde{\gb}^{\tilde{\pi}} := \{\tilde{\pi}, .\}_\ggii$ 
is the differential introduced in \cite{Xu:NCPA} that defines {\em noncommutative Poisson cohomology}. 

The differential $b^\pi$, in turn, leads in this context to the {\em noncommutative Poisson homology} defined in \cite[\S4.1]{NeuPflPosTan:HOFDOPELG}: similar to \rmref{yamamay}, there is an isomorphism
$$
C_\bull(\Ae,A) \simeq C_\bull(A,A),
$$
where the right hand side is the standard Hochschild chain complex,
and using this along with \rmref{yamamay}, one obtains analogously to the considerations in \cite[\S7.1]{KowKra:BVSOEAT} the operator
\begin{eqnarray*}
       b^{\tilde\pi}(a_0 \otimes_k \cdots \otimes_k a_n) &\!\!\!\!\!=&\!\!\!\!\! 
\sum^{n-1}_{i=0} (-1)^{n-i}  
a_0 \otimes_k \cdots \otimes_k {\tilde\pi}\big(a_{i} \otimes_k a_{i+1}\big) \otimes_k \cdots \otimes_k a_n \\
&& \quad +
{\tilde\pi}(a_{n} \otimes_k a_0) \otimes_k a_{1} \otimes_k \cdots \otimes_k a_{n-1},  
\end{eqnarray*}
which is the noncommutative Poisson boundary in \cite{NeuPflPosTan:HOFDOPELG}.

In both cases, for a Poisson manifold $\cP$ and the commutative algebra $A:= \cinfc \cP $ of compactly supported smooth functions on $\cP$, 
these two definitions lead to the differential geometric notions of Poisson cohomology resp.\ homology, as introduced
 by Lichnerowicz \cite{Lic:LVDPELADLA} resp.\ Koszul \cite{Kos:CDSNEC} and Brylinski \cite{Bry:ADCFPM}. 
The resulting mixed complex $\big(C^\bull(A,A), b^{\tilde{\pi}}, d \big)$, where $d$ is the de Rham differential for forms, was introduced in {\cite[\S1.3.4]{Bry:ADCFPM}}, and the corresponding cyclic homology of Poisson manifolds was further discussed in, {\em e.g.}, 
\cite{FerIbaLeo:TCSSFPM, Pap:HAAVDP, Vdb:NCHOSTDQS}.
\end{example}

\begin{example}
In \cite{Hue:PCAQ}, Poisson homology for a commutative Poisson algebra $A$ was introduced somewhat differently: if $\{.,.\}$ is a Poisson structure on a commutative algebra $A$,  
then the pair $(A, \gO^1_{A|k})$, 
where $\gO^1_{A|k}$ denotes the K\"ahler differentials over $A$, 
can be given the structure of a Lie-Rinehart algebra depending on $\{.,.\}$. Its Lie-Rinehart homology is 
then the Poisson homology of $A$. In view of \cite{KowPos:TCTOHA, KowKra:CSIACT}, this amounts to considering the cyclic (co)homology of the left $A$-bialgebroid given by the universal enveloping algebra $V(\gO^1_{A|k})$, 
which, in turn, depends on the right $(A,\gO^1_{A|k})$-module structure (see \cite{Hue:PCAQ} for the definition) given by 
$a \otimes b du \mapsto \{ab, u\}$ 
for $a,b,u \in A$. Hence, Poisson homology can be introduced in (at least) 
two ways arising from two different left bialgebroids $(A, \Ae)$ and $\big(A, V(\gO^1_{A|k})\big)$, respectively;
however, the latter
approach, in contrast to Example \ref{zweites}, does not include the case of  noncommutative Poisson algebras (as there is no notion of a Lie-Rinehart algebra over a noncommutative base algebra).
\end{example}

\begin{example}
Another way of arriving at the differential geometric notion of Poisson cohomology is by considering the bialgebroid given by the jet space $J\!L$ for a Lie-Rinehart algebra $(A,L)$ as in Example \ref{cambridge}, taking for $L$ the sections of the tangent bundle of a Poisson manifold. This situation will be discussed at length 
in \S\ref{pirano} and \S\ref{salice} below.
\end{example}

\subsection{Batalin-Vilkovisky algebra structures on commutative Poisson bialgebroids}
\label{castropretorio}

In this section, we assume that $U$ be a {\em commutative} Poisson left Hopf algebroid and for the sake of simplicity that
$M:=A$. 
In this case, one can prove that beyond the canonical Gerstenhaber 
algebra structure
 on $H^\bull(U,A)$, there is also one on $H_\bull(U,A)$, which is moreover a Batalin-Vilkovisky algebra. 
This essentially follows by an application to the bialgebroid case of Koszul's classical result in \cite{Kos:CDSNEC}. In there, Koszul considered a graded commutative algebra $S = \oplus_{p \geq 0} S^p$ 
(over a field the characteristic of which is different from two) 
with unit $1 \in S^0$. Moreover, let there be a differential operator $D \in \End(S)$ of at most second order and odd degree $r$, which vanishes on scalars ({\em i.e.}, $D(1) = 0$), and the square of which is supposed to be again of at most second order. 
In such a situation, the bracket
\begin{equation}
\label{romoletto}
\begin{split}
\{.,.\}_\dehhe: S^p \otimes S^q &\to S^{p+q+r}, \\
x \otimes y &\mapsto (-1)^p D(xy) + (-1)^{|p|} D(x)y - (-1)^{p|q|} D(y)x  
\end{split}
\end{equation}
generated by $D$ yields a Gerstenhaber algebra structure on $S$ and, by the very construction, even that of a Batalin-Vilkovisky algebra (see [{\em op.~cit.}]); for a recent contribution with respect to formality issues of such a bracket  see, {\em e.g.}, \cite{FioMan:FOKBADOHPM}, or \cite{BraLaz:HBVAIPG} for issues related to higher Koszul brackets and homotopy Batalin-Vilkovisky structures.

The necessary ingredients to apply this general fact to the homology groups
 $H_\bull(U,A)$ of a commutative Poisson left Hopf algebroid are the shuffle product and the Lie derivative in \rmref{messagedenoelauxenfantsdefrance2}.

\subsubsection{The shuffle product for commutative bialgebroids}
Recall from, {\em e.g.}, \cite[\S4.2]{Lod:CH} that the shuffle product on Hochschild chains leads to an {\em inner} shuffle product map 
provided that 
the algebra in question is commutative. We give here a version slightly adapted to the case of the simplicial module $C_\bull(U,A)$. 
Generally, the shuffle product map could be defined on any left bialgebroid $U$ provided the base algebra $A$ is central in $U$ (which happens, for example, if $U$ is a bialgebra over $A$ or if $U$ is a commutative left bialgebroid).

Hence, let $U$ be a commutative left bialgebroid over $A$ for the rest of this section. Although this means for $A$ to be commutative as well and therefore $A = \Aop$, we 
stick to the notation $U \otimes_\Aopp U := \due U {\blact} {} \otimes_{\Aopp} U_\ract$ to distinguish the various tensor products.
For $p, q \geq 1$, define
\begin{equation}
\label{shuffle1}
\begin{split}
& \cdot \times \, \cdot = \sh_{pq}: C_p(U,A) \otimes C_q(U,A)  \to 
C_{p+q}(U,A), \\
& (u^1, \ldots, u^p) \times (u^{p+1}, \ldots, u^{p+q}) :=  \sum_{\gs \in \Sh(p,q)} (-1)^\gs (u^{\gs^{-1}(1)},  \ldots, u^{\gs^{-1}(p+q)}),  
\end{split}
\end{equation}
where, as usual,
$$
\Sh(p,q) := \{ \gs \in S_{p+q} \mid \gs(1) < \ldots < \gs(p); \gs(p+1) < \ldots < \gs(p+q) \}
$$
is the set of $(p,q)$-shuffles in the symmetric group $S_{p+q}$. 
Additionally, set $\sh_{00} := m_\ahha$, the multiplication in $A$, and 
\begin{equation}
\label{shuffle0}
\begin{array}{rclrcl}
\sh_{p0}:  C_p(U,A) \otimes A &\!\!\!\! \to&\!\!\!\!  C_{p}(U,A), & 
(u^1, \ldots, u^p) \otimes a &\!\!\!\! \mapsto&\!\!\!\!  (u^1 \ract a, \ldots, u^p), \\
\sh_{0q}:  A \otimes C_q(U,A) &\!\!\!\! \to&\!\!\!\!  C_q(U,A), &
a \otimes (u^1, \ldots, u^q)  &\!\!\!\! \mapsto&\!\!\!\!  (u^1, \ldots, a \blact u^q), 
\end{array}
\end{equation}
but note that the second line is actually redundant, {\em i.e.}, $\sh_{p0} = \sh_{0p}$ since $U$ is commutative.
If we let 
$$
\sh := \sum_{{p, q \geq 0} \atop {p+q=n}} \sh_{pq}: \big(C_p(U,A) \otimes C_q(U,A)\big)_n \to C_{n}(U,A)
$$
be the sum of the shuffle products for $p+q=n$, a straightforward computation proves, analogous to the Hochschild case for associative algebras (see, {\it e.g.}, \cite[p.~312]{McL:H}), 
that the Hochschild boundary is a graded derivation of the shuffle product and that $\sh$ is a map of complexes:

\begin{lem}
\label{vincoli1}
For $x \in C_p(U,A)$, $y \in C_\bull(U,A)$, one has
\begin{equation}
\label{vincoli2}
b(x \times y) =  bx \times y + (-1)^p x \times by.  
\end{equation}
The map $\sh$ therefore is a map of complexes of degree $0$, {\em i.e.}, $[b,\sh]=0$.
Hence, the induced map 
$$
\cdot \times \, \cdot: H_p(U, A) \otimes H_q(U,A) \to H_{p+q}(U,A)
$$
establishes the structure of a graded commutative algebra on $H_\bull(U,A)$.
\end{lem}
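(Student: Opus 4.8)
The plan is to first establish the Leibniz-type identity \rmref{vincoli2}, after which every remaining assertion follows by purely formal arguments. Granting \rmref{vincoli2} and summing over all decompositions $p+q=n$ shows that $\sh$ intertwines the differential $b\otimes\id+\id\otimes b$ on $C_\bull(U,A)\otimes C_\bull(U,A)$ with $b$ on $C_\bull(U,A)$, i.e.\ $[b,\sh]=0$, so $\sh$ is a chain map and the product $\times$ descends to $H_\bull(U,A)$. Associativity and unitality of the induced product — the unit being the class of $1_\ahha\in A=C_0(U,A)$, using $1_\ahha\blact u=u=u\ract 1_\ahha$ in \rmref{shuffle0} together with $b|_{C_0}=0$ — follow from the standard combinatorial identities for shuffle permutations exactly as in the Hochschild case, while graded commutativity $x\times y=(-1)^{pq}\,y\times x$ (already on the chain level) follows from the bijection $\Sh(p,q)\to\Sh(q,p)$ given by precomposition with the block transposition, which carries the sign $(-1)^{pq}$, combined with the commutativity of $U$ that permits identifying the reordered elementary tensors. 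So the entire content lies in \rmref{vincoli2}.

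To prove \rmref{vincoli2} I would expand $b=\sum_{i=0}^{n}(-1)^i\dd_i$ with the faces from Proposition~\ref{kamille}: for $1\le i\le n-1$ the face $\dd_i$ merges two adjacent $U$-slots by multiplication, whereas $\dd_0$ (applying $\gve$ to the last slot) and $\dd_n$ (absorbing the first $U$-slot into the base-algebra slot via the canonical action) are the two ``boundary'' faces. Fixing a $(p,q)$-shuffle $\gs$ and looking at the summand $(u^{\gs^{-1}(1)},\dots,u^{\gs^{-1}(p+q)})$ of $x\times y$, an interior face $\dd_i$ multiplies the entries in two consecutive positions whose original indices $a<b$ either (i) both lie in $\{1,\dots,p\}$, (ii) both lie in $\{p+1,\dots,p+q\}$, or (iii) straddle the two blocks, $a\le p<b$. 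The type-(i) terms, collected over all $\gs$ and all such $i$, reassemble into $\sh(bx\otimes y)$; the type-(ii) terms reassemble into $(-1)^p\,\sh(x\otimes by)$, the sign arising from the shift of the second block's positions; and the type-(iii) terms cancel in pairs under the involution $\gs\mapsto(a\,b)\gs$, which transposes the two straddling elements, reverses $(-1)^\gs$, and — \emph{this is where commutativity of $U$ enters} — leaves the merged elementary tensor unchanged. The two boundary faces are handled by a direct check from \rmref{shuffle1} and \rmref{shuffle0}: in a shuffled word the first slot is $u^1$ or $u^{p+1}$ and the last is $u^p$ or $u^{p+q}$, and, using that the $U$-action and $U$-coaction on $A$ are the canonical ones \rmref{stabatmater1}--\rmref{stabatmater2} and that $A$ is central in $U$, one checks that the $\dd_0$- and $\dd_n$-contributions to $b(x\times y)$ match exactly the corresponding contributions in $bx\times y$ and $(-1)^p\,x\times by$. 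Adding the three interior contributions and the boundary ones yields \rmref{vincoli2}.

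The main obstacle I expect is the sign bookkeeping in the interior case: one must verify that the position shifts $\dd_i$ induces inside a shuffled word, together with $(-1)^\gs$ and the extra $(-1)^i$ from $b$, reproduce precisely the signs occurring in $\sh(bx\otimes y)$ and $(-1)^p\,\sh(x\otimes by)$, and that the straddling involution $\gs\mapsto(a\,b)\gs$ is genuinely fixed-point-free and sign-reversing — the exact point at which commutativity of $U$, and not merely centrality of $A$, is indispensable. A secondary, more tedious but routine, task is the treatment of the two exceptional faces $\dd_0,\dd_n$, which in the bialgebroid setting involve the counit and the twisted module action on $A$ rather than a plain slot merge, together with the low-degree cases $p=0$ or $q=0$, where one of $bx$, $by$ vanishes and \rmref{vincoli2} has to be read off directly from \rmref{shuffle0}.
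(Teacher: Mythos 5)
Your proposal is correct and follows essentially the same route as the paper, which does not write out the computation but explicitly declares it to be ``analogous to the Hochschild case for associative algebras'' with a reference to Mac Lane: the standard decomposition of $b$ into face maps, cancellation of the straddling terms in pairs via the sign-reversing transposition involution (which is exactly where commutativity of $U$ is used), and reassembly of the remaining terms into $bx\times y$ and $(-1)^p\,x\times by$. Your separate treatment of the two exceptional faces $\dd_0,\dd_n$ (which in this bialgebroid setting involve the counit and the canonical action on $A$ rather than a plain slot merge) is the right extra care needed here, and the formal deductions of $[b,\sh]=0$, associativity, unitality and graded commutativity are as in the paper.
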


\subsubsection{The $\Tor^U_\bull(A,A)$ groups as Batalin-Vilkovisky algebra}
In order to obtain a Gerstenhaber algebra structure on $\Tor^U_\bull(A,A)$, we will add to the shuffle product from the preceding subsection a bracket that is generated by the Lie derivative using a Poisson structure: to be able to do this, the left bialgebroid needs to carry the additional structure of a left Hopf algebroid:

\begin{theorem}
\label{barberini}
Let $U$ be a commutative Poisson left Hopf algebroid with triangular $r$-matrix $\gt$. 
Then there is a $k$-bilinear map
\begin{equation}
\label{freni&frizioni1}
\begin{split}
\{.,.\}_\gt: \ & C_p(U,A) \otimes C_q(U,A) \to C_{|p+q|}(U,A),  \qquad p, q \geq 0, \\
& x \otimes y \mapsto (-1)^{|p|} b^\gt(x \times y) + (-1)^{p} b^\gt x \times y + x \times b^\gt y,  
\end{split}
\end{equation}
where $C_\bull(U,A)$ is seen as graded commutative algebra by means of the shuffle product \rmref{shuffle1}, which 
induces a Batalin-Vilkovisky algebra structure 
\begin{equation}
\label{freni&frizioni2}
\{.,.\}_\gt: \  
H_p(U,A) \otimes H_q(U,A) \to H_{p+q-1}(U,A) 
%
\end{equation}
on homology.
\end{theorem}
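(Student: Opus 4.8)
The plan is to deduce the statement from Koszul's theorem recalled in \S\ref{castropretorio}, applied to the graded commutative algebra $S:=H_\bull(U,A)$ equipped with the shuffle product \rmref{shuffle1} and with the degree $-1$ operator $D:=-b^\gt=\lie_\gt$ (recall $b^\gt=-\lie_\gt$ by \rmref{monti}). So I have to check the hypotheses of that theorem for the pair $(S,D)$, identify the bracket it generates with $\{.,.\}_\gt$, and then transport everything to homology.

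The easy hypotheses are handled as follows. By Lemma~\ref{vincoli1} the shuffle product is associative and graded commutative and descends to a unital product $\times$ on $H_\bull(U,A)$, the unit being the class of $1_\ahha\in A=C_0(U,A)$. Since $\gt$ is a $2$-cocycle, \rmref{alles1} gives $[\bb,b^\gt]=-[\bb,\lie_\gt]=\lie_{\gd\gt}=0$, so $b^\gt$ is a chain map for $\bb$ and induces a well-defined degree $-1$ operator on $H_\bull(U,A)$; moreover $(b^\gt)^2=0$ already on chains by the lemma in \S\ref{fave}, hence $D^2=0$ on homology. The Lie derivative $\lie_\gt$ vanishes on $C_0(U,A)=A$ (in \rmref{messagedenoelauxenfantsdefrance2} one has $p=2>0+1$), so $D$ kills scalars. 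Finally, using $|p|=p-1$ and the graded commutativity of $\times$, a short sign manipulation shows that the Koszul bracket \rmref{romoletto} generated by $D=\lie_\gt$ with $r=-1$ is exactly the bilinear map $\{.,.\}_\gt$ of the theorem; in particular it has the bidegree $H_p\otimes H_q\to H_{p+q-1}$, and, being assembled from $b^\gt$ and $\times$ (with which $\bb$ is compatible by \rmref{alles1} and Lemma~\ref{vincoli1}), it passes from chains to homology.

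The one substantial point — the remaining hypothesis of Koszul's theorem — is that $b^\gt$, equivalently $\lie_\gt$, is a \emph{differential operator of order at most two} for the shuffle product on $H_\bull(U,A)$; equivalently, that the two-fold deviation $(x,y)\mapsto b^\gt(x\times y)-(b^\gt x)\times y-(-1)^p\,x\times(b^\gt y)$, which up to the sign $-(-1)^p$ is $\{x,y\}_\gt$, is a graded derivation in each argument. I would obtain this from the Cartan--Rinehart homotopy formula of Theorem~\ref{feinefuellhaltertinte}: for a commutative left Hopf algebroid $U$ the base algebra $A$ is an SaYD module, and, replacing $\gt$ by a normalised representative of its cohomology class (which alters $b^\gt$ only by a $\bb$-commutator, hence not at all on homology), one has $\lie_\gt=[B,\iota_\gt]$ as operators on $H_\bull(U,A)$, with $\iota_\gt=\gt\smallfrown\cdot$ the cap product. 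Now $\iota_\gt$ is a differential operator of order $\le 2$ for $\times$ — the Hopf-algebroid analogue of Koszul's classical computation for the contraction of Hochschild chains of a commutative algebra with a bivector, verified directly from the explicit cap-product formula and \rmref{shuffle1} — while $B$ is a graded derivation of $\times$ on the normalised complex $\bar C_\bull(U,A)$, the bialgebroid counterpart of the classical relation between Connes' operator $B$ and the shuffle product (i.e.\ of ``$d$ is a derivation of $\wedge$''). Since the graded commutator of an operator of order $\le 1$ with one of order $\le 2$ again has order $\le 2$, the identity $b^\gt=-\lie_\gt=-[B,\iota_\gt]$ yields the claim. Alternatively one can argue head-on from the explicit formula \rmref{encarnita2} for $\lie_\gt$ with $p=2$: its untwisted part inserts $\DD_\gt$ into two consecutive tensor slots, so on a shuffle $x\times y$ it produces only terms of the shape $(\lie_\gt x)\times y$, $x\times(\lie_\gt y)$, or genuinely mixed (never triply entangled) ones, and the twisted boundary terms are absorbed using \rmref{Sch2}--\rmref{Sch9} together with the commutativity of $U$.

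Granting the order-$\le 2$ property, Koszul's theorem applies to $(H_\bull(U,A),\times,D)$ with $D=\lie_\gt$: the generated bracket, identified above with $\{.,.\}_\gt$, makes $(H_\bull(U,A),\times,\{.,.\}_\gt)$ a Gerstenhaber algebra, and because $D^2=0$ it is in fact a Batalin--Vilkovisky algebra with generator $\lie_\gt$ (equivalently $-b^\gt$), which is the assertion of the theorem; when $\due U \blact {}$ is projective as a left $A$-module one may additionally rewrite $H_\bull(U,A)\simeq\Tor^U_\bull(A,A)$. The main obstacle is exactly the order-$\le 2$ statement of the previous paragraph: it is where the commutativity of $U$ and the combinatorics of shuffles genuinely enter, and where one must keep in mind that the clean identity $\lie_\gt=[B,\iota_\gt]$ is available only after passing to homology — at the chain level it requires the cyclic corrections of Remark~\ref{hornet}.
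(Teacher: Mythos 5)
Your overall skeleton is exactly the paper's: identify $\{.,.\}_\gt$ with the Koszul bracket \rmref{romoletto} generated by $D=\lie_\gt=-b^\gt$ on $\big(H_\bull(U,A),\times\big)$, check that $D$ kills scalars and squares to zero on homology, and reduce everything to the single substantial claim that $\lie_\gt$ is a differential operator of order at most two for the shuffle product \emph{on homology}. Your sign bookkeeping identifying \rmref{romoletto} with \rmref{freni&frizioni1}, and your use of \rmref{alles1} and of the lemma in \S\ref{fave} for the routine hypotheses, are all correct and match the paper.

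Where you diverge is in how the order-$\le 2$ property is established, and this is where your argument has a gap. The paper proves it by a direct computation with the explicit formula \rmref{encarnita2}: the untwisted part of $\lie_\gt$ visibly satisfies \rmref{tipa2}, and the twisted part is handled by a long rewriting procedure using \rmref{portaportese1}--\rmref{portaportese3}, valid only after passing to homology. Your primary route instead writes $\lie_\gt=[B,\iota_\gt]$ (legitimate on $H_\bull(U,A)$ by Theorem \ref{feinefuellhaltertinte}, since $\gt$ is a cocycle) and invokes two facts: that $B$ is a graded derivation of $\times$ on homology (true; the paper itself uses this in \S\ref{salice}, citing the Hochschild case in Loday), and that $\iota_\gt$ is a differential operator of order $\le 2$ for $\times$. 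This second claim is asserted as something ``verified directly from the explicit cap-product formula and \rmref{shuffle1}'', but it is nowhere near immediate: the cap product applies $\gt$ to the \emph{last two} tensor slots of each shuffle and multiplies the result into the third-from-last slot, which may belong to either factor of $x\times y$, so already the degree-$(2,1)$ and degree-$(2,2)$ cases require nontrivial rearrangements (and, as in the paper's argument for $\lie_\gt$, almost certainly identities that hold only on homology, such as \rmref{portaportese2}). In other words, you have traded the paper's hard combinatorial lemma for a different hard combinatorial lemma of comparable difficulty and then not proved it. Your fallback route --- arguing head-on from \rmref{encarnita2} --- is essentially what the paper does, but you only gesture at it; the twisted part is precisely the place where ``the terms are absorbed using \rmref{Sch2}--\rmref{Sch9} and commutativity'' conceals several pages of work and the essential use of passing to homology. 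So the proposal is a correct plan with an attractive, more structural variant for the key step, but as written the key step is not established; to complete it you must either prove the order-$\le 2$ property of $\iota_\gt$ for the shuffle product on $H_\bull(U,A)$ (which would be a genuinely nicer proof than the paper's, since the commutator calculus for graded differential operators then does the rest), or carry out the direct verification for $\lie^{\scriptscriptstyle{\rm tw}}_\gt$ as the paper does.
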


\begin{rem}
If $\due U \blact {}$ is projective as left $A$-module, this yields a bracket 
$$
\{.,.\}_\gt: \Tor^U_p(A,A) \otimes \Tor^U_q(A,A) \to \Tor^U_{p+q-1}(A,A).
$$
\end{rem}

\begin{proof}[Proof of Theorem \ref{barberini}]
This will be proven basically by applying Koszul's result with respect to the Lie derivative \rmref{encarnita2} and the shuffle product \rmref{shuffle1}. 
From what was said around Eq.~\rmref{romoletto}, it is clear that it suffices to show that $\lie_\theta$, which is of odd degree if $\theta \in C^{\rm \scriptscriptstyle{even}}(U,A)$, 
and which vanishes on $C_0(U,A) =A$ by definition, is a differential operator of degree $2$ on the graded commutative algebra $\big(H_\bull(U,A), \times)$. 
Following Koszul \cite[\S1]{Kos:CDSNEC} we call, as in the case of an ungraded algebra, 
a differential operator $\mathcal{D} \in \End(S)$ acting on a graded commutative (unital) algebra $S$ of {\em second order} if
\begin{equation}
\label{tipa1}
m_\esse(\mathcal{D} \otimes \id)\big(dx \, dy \, dz\big) = 0.
\end{equation}
Here, $m_\esse$ is the multiplication in $S$ and $dx$ are the K\"ahler differentials in $\gO^1_{S|k}$, {\em i.e.}, we set $dx : = x 
\otimes 1 - 1 \otimes x$ as a map $d: S \to I/I^2 \simeq \gO^1_{S|k}$ (with the isomorphism suppressed), 
where $I$ is the ideal in $S \otimes S$ defined as the kernel of $m_\esse$, and $S \otimes S$ becomes a commutative graded algebra by factorwise multiplication. 
Explicitly, Eq.~\rmref{tipa1} means for any $x \in S^p, y \in S^q$, and $z \in S^r$
\begin{equation}
\label{tipa2}
\begin{split}
\mathcal{D}(xyz) &= \mathcal{D}(xy)z + (-1)^{p(q+r)} \mathcal{D}(yz)x + (-1)^{r(p+q)} \mathcal{D}(zx)y \\
&\quad - \mathcal{D}(x)yz + (-1)^{|p(q+r)|} \mathcal{D}(y)zx + (-1)^{|r(p+q)|} \mathcal{D}(z)xy + \mathcal{D}(1)xyz,
\end{split}
\end{equation}
which we will verify now for the Lie derivative along $\gt$:

\begin{lem}
For a commutative Poisson left Hopf 
algebroid $U^\gt$, the Lie derivative $\lie_\gt$ is a second-order differential operator on $H_\bull(U,A)$.
\end{lem}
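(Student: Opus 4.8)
The plan is to verify the second-order condition \rmref{tipa2} for $\mathcal{D} = \lie_\gt$ on the graded commutative algebra $\big(H_\bull(U,A), \times\big)$ by a direct computation with the explicit formula \rmref{encarnita2} for the Lie derivative of a commutative left Hopf algebroid. Since $\lie_\gt$ vanishes on $C_0(U,A) = A$ by definition, the term $\mathcal{D}(1)xyz$ drops out, so it suffices to prove
\begin{equation*}
\begin{split}
\lie_\gt(x \times y \times z) &= \lie_\gt(x \times y)\times z + (-1)^{p(q+r)}\lie_\gt(y\times z)\times x + (-1)^{r(p+q)}\lie_\gt(z\times x)\times y \\
&\quad - \lie_\gt(x)\times y\times z - (-1)^{|p(q+r)|}\lie_\gt(y)\times z\times x - (-1)^{|r(p+q)|}\lie_\gt(z)\times x\times y
\end{split}
\end{equation*}
for $x \in C_p(U,A)$, $y \in C_q(U,A)$, $z \in C_r(U,A)$, as an identity at the level of homology (it suffices to check it up to a $b$-boundary). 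The key point is that $\gt$ is a $2$-cochain, hence $|p|=1$ for $\gt$ and the insertion operators appearing in \rmref{encarnita2} only touch \emph{one or two adjacent slots} of a chain; this is exactly the structural reason a degree-$2$ operator behaves like a ``second-order'' operator with respect to the shuffle product.

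First I would split $\lie_\gt = \lie^{\scriptscriptstyle{\rm untw}}_\gt + \lie^{\scriptscriptstyle{\rm tw}}_\gt$ as in \rmref{travaux} and treat the two pieces separately. For the untwisted part, $\lie^{\scriptscriptstyle{\rm untw}}_\gt$ acts by summing over insertions of $\DD_\gt(u^i,u^{i+1})$ into two consecutive slots; since the shuffle product $x\times y$ is a signed sum over interleavings of the slot-sequences of $x$ and $y$, an insertion $\DD_\gt$ in $x\times y\times z$ either hits two slots coming from the same factor — contributing to $\lie^{\scriptscriptstyle{\rm untw}}_\gt(x)\times y\times z$ and its permutations — or hits two slots coming from two \emph{different} factors. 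The ``two different factors'' contributions are precisely what reassemble (after resumming the shuffles) into the ``mixed'' terms $\lie_\gt(x\times y)\times z$ etc., and the combinatorial identity here is the standard one underlying the fact that a derivation-like插入 of a binary operation into a shuffle product is second order; I would cite or reproduce the bookkeeping as in the associative-algebra case (\cite[p.~312]{McL:H} or \cite[\S1]{Kos:CDSNEC}), noting that commutativity of $U$ makes all the $A$-bimodule subtleties collapse. For the twisted part $\lie^{\scriptscriptstyle{\rm tw}}_\gt$, which involves the translation maps $u\mapsto u_+\otimes u_-$ and the extra degeneracy via cyclic rotation, the argument is the same in spirit but I would use the Cartan–Rinehart homotopy formula $\lie_\gt = [B,\iota_\gt]$ on homology (Theorem \ref{feinefuellhaltertinte}) to reduce to a statement about the cap product $\iota_\gt$ and the cyclic differential $B$: $\iota_\gt$ inserts $\gt(u^{n-1},u^n)$ and thus is manifestly ``first order minus one slot'' on the shuffle algebra, and $B = \sss_{-1}\cN$ is a derivation of the shuffle product up to boundary (this is the content of the computation in \rmref{vincoli2} together with standard mixed-complex identities). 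The net effect is that $\lie_\gt = [B,\iota_\gt]$ is a graded commutator of a degree-$0$ derivation-up-to-homotopy with a ``second-order'' operator $\iota_\gt$, hence itself second order.

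Concretely I would carry out the steps in this order: (i) record that $\lie_\gt$ vanishes on $A$ and has odd degree $-1$, so only \rmref{tipa2} needs checking; (ii) use the homotopy formula to write $\lie_\gt = [B,\iota_\gt]$ on $H_\bull(U,A)$, reducing everything to $B$ and $\iota_\gt$; (iii) show $\iota_\gt$ is a second-order operator on $(C_\bull(U,A),\times)$ — this is straightforward since $\iota_\gt(m,x)$ only modifies the last two entries, so inserting it into a threefold shuffle product $x\times y\times z$ lands in at most two of the three factors, and the resulting identity is exactly \rmref{tipa2} for $\iota_\gt$; (iv) use Lemma \ref{vincoli1} (graded Leibniz for $B$ on shuffles, i.e.\ $[b,\sh]=0$ together with the mixed-complex relation \rmref{trieste}) to conclude that $B$ is a graded derivation of $\times$ on homology; (v) combine (iii) and (iv) via the elementary fact that the graded commutator of a first-order operator (derivation) with a second-order operator is second order, to obtain that $\lie_\gt = [B,\iota_\gt]$ is second order. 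The main obstacle I anticipate is step (iii)–(iv): verifying that $\iota_\gt$ genuinely satisfies the full second-order identity \rmref{tipa2} rather than merely a Leibniz rule requires care because $\iota_\gt$ lowers degree by $1$ and mixes the cyclic structure with the simplicial one, and keeping track of the Koszul signs $(-1)^{p(q+r)}$, $(-1)^{|p(q+r)|}$ etc.\ through the shuffle resummation is where the bookkeeping is genuinely delicate; commutativity of $U$ is essential throughout to make the shuffle product well-defined and to let entries of chains be freely permuted past one another.
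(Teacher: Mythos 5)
Your overall architecture differs from the paper's: the paper splits $\lie_\gt=\lie^{\scriptscriptstyle{\rm untw}}_\gt+\lie^{\scriptscriptstyle{\rm tw}}_\gt$, observes that the untwisted part satisfies \rmref{tipa2} termwise, and then verifies \rmref{deewangee} for the twisted part $\lie^{\scriptscriptstyle{\rm tw}}_\gt=-\iota_\gt s_{-1}+(-1)^{n+1}\iota_\gt s_{-1}t$ by a long direct computation that repeatedly uses the cocycle identity \rmref{portaportese1}, the homological vanishing $b\iota_\gt=\iota_\gt b=0$, and the explicit boundary $s_{-1}b$ to relocate the slot where $\gt$ is evaluated. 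Your plan instead is to write $\lie_\gt=[B,\iota_\gt]$ on homology and invoke the abstract fact that the graded commutator of a derivation with a second-order operator is second order. That reduction is algebraically sound \emph{if} its two inputs are established, and it would be a genuinely cleaner argument; but as written it has a real gap.

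The gap is your step (iii). The claim that $\iota_\gt$ is second order on $\big(H_\bull(U,A),\times\big)$ is not ``straightforward because $\iota_\gt$ only modifies the last two entries''; that observation actually works against you. Since $\iota_\gt$ always contracts the \emph{last} two slots, the chain $\iota_\gt(x\times y)\times z$ re-shuffles the entries of $z$ past the position where $\gt$ was already evaluated, whereas in $\iota_\gt(x\times y\times z)$ the cochain $\gt$ is evaluated only at the very end of each shuffle summand. These chains do not agree termwise, so \rmref{tipa2} for $\iota_\gt$ fails on the chain level and can only hold after passing to homology, which forces exactly the boundary manipulations (the analogues of \rmref{portaportese1}--\rmref{portaportese3} and the recursive relocation of arguments of $\gt$) that constitute the bulk of the paper's proof. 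In other words, your reduction does not dispose of the hard computation; it relocates it into an unproved lemma about $\iota_\gt$, and you would still need to carry out essentially the same bookkeeping. A secondary problem is step (iv): Lemma \ref{vincoli1} concerns $b$, not $B$, and \rmref{trieste} only says $b^\gt$ and $B$ anticommute; neither implies that $B$ is a derivation of the shuffle product on homology. That fact is true (the paper uses it in \S\ref{salice}, citing the Rinehart--Loday argument of \cite[Cor.~4.3.4]{Lod:CH}), but it requires the genuine cyclic-shuffle computation, not the identities you quote. Until (iii) is actually proved, the proposal does not establish the lemma.
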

\begin{proof}
First, observe
that the unit for the shuffle product \rmref{shuffle1} is given as in \rmref{shuffle0}, {\em i.e.}, by elements in $C_0(U,A) = A$, and that $\lie_\gt$ vanishes on $A$ by definition, hence 
$\lie_\gt(1_{C_0(U,A)}) = 0$. Second, using the terminology of \rmref{travaux}, it is immediate 
from the explicit form of $\lie_\gt$ in \rmref{encarnita2} that the first summand $\lie^{\scriptscriptstyle{\rm untw}}_\gt$ independently fulfils \rmref{tipa2}. 
Hence, to prove the lemma it is enough to show the same property independently for the twisted part $\lie^{\scriptscriptstyle{\rm tw}}_\gt$, 
that is, that for $x \in C_p(U,A), y \in C_q(U,A)$, and $z \in C_r(U,A)$, {\em when passing to homology,}
\begin{equation}
\label{deewangee}
\begin{split}
\lie^{\scriptscriptstyle{\rm tw}}_\gt(x \times y \times z) 
&= \lie^{\scriptscriptstyle{\rm tw}}_\gt
(x \times y) \times z + (-1)^{p(q+r)} \lie^{\scriptscriptstyle{\rm tw}}_\gt
(y \times z) \times x \\
&\quad  + (-1)^{r(p+q)} \lie^{\scriptscriptstyle{\rm tw}}_\gt
(z \times x) \times y - \lie^{\scriptscriptstyle{\rm tw}}_\gt
(x) \times y \times z \\
&\quad + (-1)^{|p(q+r)|} \lie^{\scriptscriptstyle{\rm tw}}_\gt
(y) \times z \times x + (-1)^{|r(p+q)|} \lie^{\scriptscriptstyle{\rm tw}}_\gt
(z) \times x \times y
\end{split}
\end{equation}
holds. This consists in a direct (but sufficiently tedious) verification, we only indicate the main steps. 
To start with, we list a few properties that are needed: 
for any $2$-cocycle $\gt$ one has,
for a commutative left bialgebroid,
 from \rmref{spaetkauf} 
 the identity
\begin{equation}
\label{portaportese1}
\gve(u)\gt(v,w)  - \gt(uv,w) + \gt(u,vw) - \gve(w)\gt(u, v) = 0
\end{equation}
for any $u, v, w \in U$.
Furthermore, dealing with the induced maps descending on homology, 
one deduces from \rmref{mulhouse2} that for a cocycle $\gt$
\begin{equation}
\label{portaportese2}
b \iota_\gt = \iota_\gt b =0
\end{equation}
is fulfilled on homology.
Apart from that, for any
$(u^1, \ldots, u^n) \in C_n(U,A)$ we can give the following expression for the Hochschild differential \rmref{appolloni1} followed by the extra degeneracy \rmref{extra}:
\begin{equation}
\label{portaportese3}
\begin{split}
s_{-1}b(u^1, \ldots, u^n) &= 
(u^1_+, \ldots, u^{n-1}_+, u^n_+ u^n_- u^{n-1}_- \cdots u^1_-) \\
&\quad + \sum^{n-1}_{i=1} (-1)^i (u^1_+, \ldots, u^{n-i}_+ u^{n-i+1}_+ , \ldots, u^{n}_+, u^{n}_- \cdots u^1_-) \\
&\quad + (-1)^n (u^2_+, \ldots, u^{n}_+, u^1_+ u^1_- u^{2}_- \cdots u^n_-), 
\end{split}
\end{equation}
as follows from \rmref{Sch6}, \rmref{Sch7}, \rmref{Sch9}, and \rmref{appolloni1}; in particular, 
the entire expression \rmref{portaportese3} equals zero when descending to homology. 
Considering the fact that  
\begin{equation}
\label{urmila}
\lie^{\scriptscriptstyle{\rm tw}}_\gt = - \iota_\gt s_{-1} + (-1)^{n+1} \iota_\gt s_{-1} t
\end{equation}
on elements of length $n$ in this situation, we 
will use Eqs.~\rmref{portaportese1}--\rmref{portaportese3} to rewrite the terms in $\lie^{\scriptscriptstyle{\rm tw}}_\gt(x \times y \times z)$: denote
$x := (u^1, \ldots, u^p)$, $y := (v^1, \ldots, v^q)$, $z := (w^1, \ldots, w^r)$, and  introduce the notation
$$
(x_{[+]}, x_{[-]}) := s_{-1}(x) = (u^1_+, \ldots, u^p_+, u^p_- \cdots u^1_-).
$$
Consider now one of the terms in the shuffle product $x \times y \times z$; for example, without loss of generality,
the element 
$(u^1, v^1, \ldots, v^q, u^2, \ldots, u^p, w^1, \ldots, w^r) = (u^1, y , u^2, \ldots, u^p, z)$. By commutativity of $U$, we can then compute, observing that $u \ract a = a \blact u$ in this case, 
\begin{footnotesize}
\begin{equation}
\label{rewriting1}
\begin{split}
&\iota_\gt  s_{-1} t (u^1, y, u^2, \ldots, u^p, z) 
\overset{\scriptscriptstyle{\phantom{\rmref{portaportese1}, \rmref{Sch8}}}}{=} 
\big(y_{[+]}, u^2_+, \ldots, u^p_+, z_{[+]} \ract \gt(y_{[-]} z_{[-]} u^1_- \cdots u^p_-, u^1_+)\big) 
\\
&\overset{\scriptscriptstyle{\rmref{portaportese1}, \rmref{Sch8}}}{=}
\ubs{=:(1)}{\big(y_{[+]}, u^2_+, \ldots, u^p_+, z_{[+]} \ract \gt(y_{[-]} z_{[-]} , u^1_+  u^1_- \cdots u^p_-)\big)} \\
& \qqquad\qqquad +  \ubs{=:(2)}{\big(y, u^2_+, \ldots, u^p_+, z \ract \gt(u^1_- \cdots u^p_-, u^1_+)\big)} \\
&\qqquad\qqquad  \ubs{=:(3)}{-\big(\gve(u^1_+) \blact y_{[+]}, u^2_+, \ldots, u^p_+, z_{[+]} \ract \gt(y_{[-]} z_{[-]}, u^1_- \cdots u^p_-)\big)}.
\end{split}
\end{equation}
\end{footnotesize}
Note that $(2)$ is already one of the terms of $ - \lie^{\scriptscriptstyle{\rm tw}}_\gt
(x) \times y \times z$ in \rmref{deewangee}, the correct sign turning out if one takes the sign of the shuffle for the element $(u^1, y , u^2, \ldots, u^p, z)$ into account.
By \rmref{portaportese2}, one furthermore has, using \rmref{Sch8} again,
\begin{footnotesize}
\begin{equation}
\label{rewriting2}
\begin{split}
&(3) =  \ubs{=:(4)}{-\big(u^1_+v^1_+, \ldots, v^q_+, u^2_+, \ldots, 
u^p_+, z_{[+]} \ract \gt(v^1_- \cdots v^q_- z_{[-]}, u^1_- \cdots u^p_-)\big)} \\
& + \ubs{=:(5)}{\sum^{|q|}_{i=1} (-1)^{i-1} \big(u^1_+, v^1_+, \ldots, v^i_+v^{i+1}_+, \ldots v^q_+, u^2_+, \ldots, 
u^p_+, z_{[+]} \ract \gt(v^1_- \cdots v^q_- z_{[-]}, u^1_- \cdots u^p_-)\big)} 
\\
& +  \ubs{=:(6)}{(-1)^{|q|} \big(u^1_+, v^1_+, \ldots, v^{q-1}_+, v^q_+u^2_+, u^3_+, \ldots, 
u^p_+, z_{[+]} \ract \gt(v^1_- \cdots v^q_- z_{[-]}, u^1_- \cdots u^p_-)\big)} \\
& + \ubs{=:(7)}{\sum^{|p|}_{j=2} (-1)^{j+q} \big(u^1_+, y_{[+]}, u^2_+, \ldots, u^j_+u^{j+1}_+, \ldots,
u^p_+, z_{[+]} \ract \gt(y_{[-]} z_{[-]}, u^1_- \cdots u^p_-)\big)} \\
& + \ubs{=:(8)}{(-1)^{p+q} \big(u^1_+, y_{[+]}, u^2_+, \ldots, u^{p-1}_+, u^p_+ w^1_+, w^2_+, \ldots, w^r_+ \ract \gt(y_{[-]} w^1_- \cdots w^r_-, u^1_- \cdots u^p_-)\big)} \\
& + \ubs{=:(9)}{\sum^{|r|}_{k=1} (-1)^{k+p+q} \big(u^1_+, y_{[+]}, u^2_+, \ldots, 
u^p_+, w^1_+, \ldots, w^{k}_+ w^{k+1}_+, \ldots, w^r_+ \ract \gt(y_{[-]} w^1_- \cdots w^r_-, u^1_- \cdots u^p_-)\big)} \\
& + \ubs{=:(10)}{(-1)^{r+p+q} \big(u^1_+, y_{[+]}, u^2_+, \ldots, 
u^p_+, w^1_+, \ldots, w^{r-1}_+ \ract \gt(y_{[-]} w^r_+ 
w^r_- \cdots w^1_-, u^1_- \cdots u^p_-)\big)} \\
& + \ubs{=:(11)}{(-1)^{|r+p+q|} \big(u^1_+, y_{[+]}, u^2_+, \ldots, 
u^p_+, w^1_+, \ldots, w^{r-1}_+ \ract \gt(w^r_+,  y_{[-]} w^1_- \cdots w^r_- u^1_- \cdots u^p_-)\big)} \\
& + \ubs{=:(12)}{(-1)^{r+p+q} \big(u^1, y_{[+]}, u^2, \ldots, 
u^p, w^1_+, \ldots, w^{r-1}_+ \ract \gt(w^r_+,  y_{[-]} w^1_- \cdots w^r_-)\big)}.
\end{split}
\end{equation}
\end{footnotesize}
We see that $(11) = (-1)^{r+p+q+1} \iota_\gt s_{-1}(u^1, y , u^2, \ldots, u^p, z)$, 
that is, rewriting the second term in \rmref{urmila} for each summand in the shuffle product $x \times y \times z$ on the left hand side of \rmref{deewangee} 
cancels the first summand in \rmref{urmila} so that we do not need to care about these terms in the following. Also, $(12)$ is one of the summands in $(-1)^{p(q+r)} \lie^{\scriptscriptstyle{\rm tw}}_\gt
(y \times z) \times x$; again, the right sign turns out if one takes the sign of the respective shuffle into account. 

As for the term $(1)$, we want to use the fact that \rmref{portaportese3} equals zero
on homology: 
to this end, 
compute, as in \rmref{rewriting1}, 
the second summand $\iota_\gt s_{-1} t$ of the twisted part of the Lie derivative for the element
$ 
(v^2, \ldots, v^q, x, z, v^1)
$
in the shuffle product. The terms that appear analogous to those in $(7)$ above produce the middle sum in \rmref{portaportese3}. 

However, 
the missing first summand in \rmref{portaportese3} so that the sum of 
all terms equal zero on homology {\em cannot} be directly produced by rewriting the terms $\iota_\gt s_{-1} t$ in the twisted part of the Lie derivative on any element in the shuffle product, but only by the following steps: similarly as in \rmref{rewriting1}, compute
\begin{footnotesize}
\begin{equation*}
\label{rewriting3}
\begin{split}
&\iota_\gt  s_{-1} t (y, u^1, \ldots, u^{p-1}, z, u^p) \\
&\quad=
\ubs{=:(13)}{\big(v^2_+, \ldots, v^q_+, u^1_+, \ldots, u^{p-1}_+, z_{[+]}, u^p_+ \ract \gt(y_{[-]} z_{[-]} , v^1_+  u^1_- \cdots u^p_-)\big)} \\
& \qquad +  \ubs{=:(14)}{\big(y, u^2_+, \ldots, u^p_+, z \ract \gt(u^1_- \cdots u^p_-, v^1)\big)} \\
&\qquad  \ubs{=:(15)}{-\big(\gve(v^1_+) \blact v^2_+, \ldots, v^q_+, u^1_+, \ldots, u^{p-1}_+, z_{[+]}, u^p_+ \ract \gt(y_{[-]} z_{[-]}, u^1_- \cdots u^p_-)\big)}.
\end{split}
\end{equation*}
\end{footnotesize}
Rewrite now the term $(15)$ in the spirit of \rmref{rewriting2}: 
since we are, for the moment, not interested in the terms that multiply elements outside the argument of $\gt$, {\em i.e.}, those analogous to $(4)$--$(9)$ in \rmref{rewriting2}, for the sake of simplicity we only write down the last three terms:
\begin{footnotesize}
\begin{equation*}
\label{rewriting4}
\begin{split}
(15) =  \ldots 
 &+ \ubs{=:(16)}{(-1)^{r+p+q} \big(y_{[+]}, u^1_+, \ldots, u^{p-1}_+, z_{[+]} \ract \gt(u^p_+ y_{[-]} z_{[-]}, u^1_- \cdots u^p_-)\big)} \\
& + \ubs{=:(17)}{(-1)^{|r+p+q|} \big(y_{[+]}, u^1_+, \ldots, u^{p-1}_+, z_{[+]} \ract \gt(u^p_+, y_{[-]} z_{[-]} u^1_- \cdots u^p_-)\big) } \\
& + \ubs{=:(18)}{(-1)^{r+p+q} \big(y_{[+]}, u^1, \ldots, u^{p-1}, z_{[+]} \ract \gt(u^p, y_{[-]} z_{[-]})\big) }.
\end{split}
\end{equation*}
\end{footnotesize}
Now $(16)$ above can be further rewritten as
\begin{footnotesize}
\begin{equation*}
\begin{split}
(16)&= \ubs{=:(19)}{(-1)^{r+p+q} \big(y_{[+]}, u^1_+, \ldots, u^{p-1}_+, z_{[+]} \ract \gt(y_{[-]} z_{[-]}, u^1_- \cdots u^p_- u^p_+)\big)} 
\\&\qqquad\qqquad 
+ \ubs{=:(20)}{(-1)^{r+p+q}\big(y, u^1_+, \ldots, u^{p-1}_+, z \ract \gt(u^p_+, u^1_- \cdots u^p_-)\big)}
\\&\qqquad\qqquad 
 \ubs{=:(21)}{-(-1)^{|r+p+q|}\big(y_{[+]}, u^1, \ldots, u^{p-1}, z_{[+]} \ract \gt(y_{[-]} z_{[-]}, u^p)\big)}.
\end{split}
\end{equation*}
\end{footnotesize}
While $(19)$ is the missing summand that cancels 
(when respecting the sign) with $(1)$ and the terms deriving from $(v^2, \ldots, v^q, x, z, v^1)$ on homology in the sense of \rmref{portaportese3}, whereas $(20)$ is another term of $ - \lie^{\scriptscriptstyle{\rm tw}}_\gt
(x) \times y \times z$, we have to deal with $(18)$ and $(21)$ which are not of the form of the terms in \rmref{deewangee}. Again by using \rmref{Sch8}, \rmref{portaportese1}, and the commutativity of $U$, one sees after some straightforward 
intermediate steps that
\begin{footnotesize}
\begin{equation*}
\begin{split}
&(-1)^{r+p+q} \big[(18)+(21)\big] \\
&= \ubs{=:(22)}{\big(y_{[+]}, u^1_+, \ldots, u^{p-1}_+, z \ract \gt(u^p_+, y_{[-]}u^p_- \cdots u^1_-)\big)} 
- \ubs{=:(23)}{\big(y_{[+]}, u^1_+, \ldots, u^{p-1}_+, z \ract \gt(y_{[-]}u^p_- \cdots u^1_-, u^p_+)\big)} \\
&\ + \ubs{=:(24)}{\big(y, u^1_+, \ldots, u^{p-1}_+, z_{[+]} \ract \gt(u^p_+, z_{[-]}u^p_- \cdots u^1_-)\big)} 
- \ubs{=:(25)}{\big(y, u^1_+, \ldots, u^{p-1}_+, z_{[+]} \ract \gt(z_{[-]}u^p_- \cdots u^1_-, u^p_+)\big)}.
\end{split}
\end{equation*}
\end{footnotesize}
Again, while $(22)$ and $(24)$ are terms that appear in the 
explicit expression of 
$\lie^{\scriptscriptstyle{\rm tw}}_\gt(x \times y) \times z$ and 
$(-1)^{r(p+q)} \lie^{\scriptscriptstyle{\rm tw}}_\gt
(z \times x) \times y$, respectively, the terms 
$(23)$ and $(25)$ 
do not appear 
in \rmref{deewangee}. One now proceeds 
recursively with these terms as in \rmref{rewriting1} to shift the tensor factors in the order in which they 
appear in the expression in \rmref{rewriting2}. We underline that this is only possible 
if one descends to homology as the rewriting \rmref{rewriting2} only holds due to \rmref{portaportese2}.

Proceeding with the same steps as above with respect to all other terms in the shuffle product $x \times y \times z$, it is moreover clear 
by a combinatorial argument that one produces all missing terms in \rmref{deewangee}. For example, one checks in a fashion analogous to proving the fact that the Hochschild differential is a graded derivation of the product as in Lemma \ref{vincoli1} 
that $(4)$, $(6)$, and $(8)$ cancel; we leave these remaining steps to the reader.
\end{proof}
Applying this lemma to Koszul's result in the way mentioned below the bracket \rmref{romoletto} proves Theorem \ref{barberini}.
\end{proof}

We will end this subsection by dealing with the two canonical examples:

\subsubsection{The zero bracket on Hochschild homology for associative algebras}
Let $A$ be a commutative associative $k$-algebra, and recall the context of Exs.~\ref{todi}, \ref{figlieditalia}, and \ref{erstes} in which $U=\Ae$. Then, if $A$ is projective over $k$, the groups $\Tor^\Ae_\bull(A,A) = HH_\bull(A)$ yield the classical Hochschild homology
of $A$ with values in $A$, and the bracket \rmref{freni&frizioni1} for the operad multiplication \rmref{distinguished, I said 2} by means of \rmref{siena} reads 
\begin{equation*}
\{x,y\}_\mu = 
(-1)^{|p|} b(x \times y) + (-1)^{p} b(x) \times y + x \times b(y).  
\end{equation*}
As mentioned in \rmref{vincoli2}, this equals zero already on the chain level and therefore the Gerstenhaber bracket vanishes.

\subsubsection{The Koszul bracket on forms}
\label{pirano}
Recall from \cite{MacXu:LBAPG} that a {\em triangular $r$-matrix} or {\em Poisson bivector} for a Lie-Rinehart algebra $(A,L)$ is an element $\pi \in \bigwedge^2_\ahha \! L$ with the property $[\pi, \pi] =0$, where $[.,.]$ denotes the classical Schouten-Nijenhuis bracket specified in \rmref{labicana0}. If $L$ is finitely generated $A$-projective, Koszul \cite{Kos:CDSNEC} proved that the exterior algebra $\bigwedge^\bull_\ahha \! L^*$ of $L^* := \Hom_\ahha(L,A)$ is a Batalin-Vilkovisky algebra with bracket
\begin{equation}
\label{elis}
\{\go,\eta\}_\pi = (-1)^p \big(\mathsf{L}_\pi(\go \wedge \eta) - \mathsf{L}_\pi(\go) \wedge \eta - (-1)^p \go \wedge \mathsf{L}_\pi(\eta) \big),
\end{equation}
for $\eta \in \textstyle{\bigwedge^\bull_\ahha \! L}$, $\go \in \textstyle{\bigwedge^p_\ahha \! L}$,  $p \geq 0$, and
which for $1$-forms $\ga, \gb \in \bigwedge^1_\ahha \! L^* = L^*$ becomes
the customary formula (which, according to \cite{Kos:PMLAMCAS}, appeared in \cite{AbrMar:FOM} for the first time)
$$
[\ga, \gb]_\pi = \mathsf{L}_{\pi^\#(\ga)}(\gb) - \mathsf{L}_{\pi^\#(\gb)}(\ga) - \mathsf{d}\mathsf{i}_\pi(\ga \wedge \gb).
$$
Here, by $\pi^\#$ we mean the map $\pi^\#: L^* \to L, 
\ \pi^\#(\ga)(\gb) := \pi(\ga,\gb)$, along with the classical operations $(\mathsf{L}, \mathsf{i}, \mathsf{d})$ of Lie derivative, contraction, and de Rham differential between forms and fields ({\em cf.}, for example, \cite[\S6.4]{KowKra:BVSOEAT} for the concrete form of these operators used here). 

In order to connect this Gerstenhaber bracket to our Gerstenhaber bracket from \rmref{freni&frizioni2}, we need to apply the construction in Theorem \ref{barberini} to the commutative left Hopf algebroid given by the jet space $J\!L$ mentioned in Example \ref{cambridge}. 
To this end, we briefly recall from \cite{KowPos:TCTOHA, KowKra:BVSOEAT} some facts that allow to apply the preceding results 
to complete Hopf algebroids such as $J\!L$: to have the structure maps ({\em e.g.}, those that define the cyclic module structure) well-defined, 
completed tensor products need to be used in the chain complex $C_\bull(J\!L,M)$. 
Similarly, in the definition of an SaYD module the
coaction should be given by a map 
$M \rightarrow J\!L \hat\otimes_\ahha M$. 
Dually, $C^\bull(J\!L,A)$ needs to be defined as
$\Hom_\Aop^\mathrm{cont}({J\!L^{\hat\otimes_\Aopp \bull}}_\ract, A)$,
where $\mathrm{cont}$ means that the cochains 
have to be continuous ($A$
being discrete), as only the operators assigned to these cochains will
be well-defined on the completed tensor products. 

Now, there is a
morphism of chain complexes  
\begin{equation}
\label{schreibwarenhandlung1}
        F: \big(\bar C_\bull(J\!L,A), \bb\big) \to 
        \big(\Hom_\ahha(\textstyle\bigwedge^\bull_\ahha \!L,A), 0\big),
\end{equation}
which, in degree $n >0 $, is given by
$$
        F(f^1, \ldots, f^n)(X^1 \wedge \cdots \wedge X^n) 
:= (-1)^n \big(Sf^1 \wedge \cdots \wedge Sf^n\big)(X^1, \ldots, X^n),
$$
while it is the identity on $A$ in degree $n = 0$.
As
$C_\bull(J\!L,A)$ is defined via completed tensor
products, we have
\begin{equation*}
        C_n(J\!L,A) \simeq \varprojlim
        \Hom_\ahha\big((V\!L^{\otimes_\ahha n})_{\leq p},A \big),  
  \end{equation*}
where $(V\!L^{\otimes_\ahha n})_{\leq p}$ is the degree $p$ part of
the filtration induced by that of $V\!L$.
That $F$ is well-defined on the normalised complex 
$\bar C_\bull(J\!L,A)$ follows 
since degenerate chains vanish under $F$.
When $L$ is finitely generated projective
over $A$, the wedge product of multilinear forms provides an
isomorphism 
$
                \textstyle\bigwedge^\bull_\ahha \!L^* \rightarrow 
                \Hom_\ahha(\textstyle\bigwedge^\bull_\ahha \!L,A) 
$
that we usually suppress in the sequel. 
In this case, one defines the map
$$
        F'(\ga^1 \wedge \cdots \wedge \ga^n) := 
        \sum_{\gs \in S_n}(-1)^\gs \big(\pr^*\ga^{\gs(1)}, 
        \ldots, \pr^*\ga^{\gs(n)}\big)
$$
for $\ga^1,\ldots,\ga^n \in L^*$, where $\pr: V\!L \to L$ denotes the projection on $L$ resulting
from Rinehart's PBW theorem \cite[Thm.~3.1]{Rin:DFOGCA}, and proves
\begin{equation}
\label{acquapanna}
        FF'=n! \,\mathrm{id}_{\scriptscriptstyle \bigwedge^n_\ahha
          \!L^{\scriptscriptstyle \ast}}.
\end{equation}
Finally, one has, dual to \rmref{schreibwarenhandlung1}, a
morphism
\begin{equation*}
F^*: \big(\!\textstyle\bigwedge^\bull_\ahha\!L, 0\big)  \to 
(\bar C^\bull(J\!L,A),\delta ) 
\end{equation*}
of cochain complexes explicitly given as 
$$
X^1 \wedge \cdots \wedge X^n \mapsto \big\{ (f^1, \ldots, f^n) \mapsto  (-1)^n \!\! \sum_{\gs \in S_n} (-1)^\gs (Sf^1)(X^{\gs(1)}) \cdots (Sf^n)(X^{\gs(n)}) \big\}.
$$
With these preparations at hand 
we can give the relation between the classical Koszul bracket on forms and the Gerstenhaber bracket \rmref{freni&frizioni2}:

\begin{prop}
If $L$ is finitely generated projective over $A$ and $\pi \in \bigwedge_\ahha^2 \! L$ 
is a triangular $r$-matrix, then for any $\go \in \bigwedge^p_\ahha \! L^*$ and $\eta \in \bigwedge_\ahha^q \! L^*$ the identity
\begin{equation}
\label{mesopotamia}
2 \, |p+q|! \, \{\go,\eta\}_{\pi} = F\big(\{F'\go, F'\eta\}_{F^*\pi}\big)
\end{equation}
holds.
\end{prop}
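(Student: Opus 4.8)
The plan is to deduce \rmref{mesopotamia} from the way the maps $F$ of \rmref{schreibwarenhandlung1}, $F'$, and $F^*$ from \cite{KowPos:TCTOHA} interact with the operators underlying the two brackets. As a preliminary one must check that $F^*\pi$ is a triangular $r$-matrix for the commutative complete left Hopf algebroid $J\!L$ in the sense of Definition \ref{shakta}, so that Theorem \ref{barberini} applies and the bracket $\{\cdot,\cdot\}_{F^*\pi}$ of \rmref{freni&frizioni1}--\rmref{freni&frizioni2} is available: since $F^* \colon (\textstyle\bigwedge^\bull_\ahha \! L, 0) \to (\bar C^\bull(J\!L,A),\gd)$ is a morphism of cochain complexes, $F^*\pi$ is a $2$-cocycle, and the equation $\{F^*\pi, F^*\pi\} = 0$ follows from $[\pi,\pi] = 0$ by a direct computation from the explicit form of $F^*$, using Rinehart's PBW theorem and the involutivity of the antipode $S$ of $J\!L$ recorded in Example \ref{cambridge}. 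Because $F$ annihilates boundaries and, by Theorem \ref{barberini}, the chain-level bracket \rmref{freni&frizioni1} descends to \rmref{freni&frizioni2} on homology, evaluating $F\big(\{F'\go, F'\eta\}_{F^*\pi}\big)$ amounts to applying $F$ to the homology class $\{[F'\go],[F'\eta]\}_{F^*\pi}$; the whole verification may therefore be carried out in $H_\bull(J\!L,A)$.

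The key observation is that, by Theorem \ref{feinefuellhaltertinte} and the Cartan--Rinehart homotopy formula, the generator $b^{F^*\pi} = -\lie_{F^*\pi}$ of the bracket \rmref{freni&frizioni1} satisfies $\lie_{F^*\pi} = [B, \iota_{F^*\pi}]$ on $H_\bull(J\!L,A)$, which is the precise counterpart of Koszul's description $\mathsf{L}_\pi = [\mathsf{d}, \mathsf{i}_\pi]$ of the Brylinski--Koszul operator generating \rmref{elis}. It therefore suffices to establish, on homology, the intertwining relations in which $F$ (equivalently $F'$) carries the cyclic differential $B$ of \rmref{extra} to the de Rham operator $\mathsf{d}$ on $\textstyle\bigwedge^\bull_\ahha \! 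L^*$, the cap product $\iota_{F^*\pi}$ to the contraction $\mathsf{i}_\pi$ with the bivector, and the shuffle product \rmref{shuffle1} to the wedge product, in each case up to an explicit scalar. These are obtained by unwinding the definitions of the cap product and of $B$ from \rmref{extra}, and the explicit form \rmref{encarnita2} of $\lie_{F^*\pi}$, on $F'$-images of decomposable forms, using \rmref{acquapanna}, Rinehart's PBW theorem, and the identities \rmref{Sch1}--\rmref{Sch9} for $J\!L$; the computations are carried out modulo boundaries, so that the vanishing statements around \rmref{portaportese2} and \rmref{portaportese3} may be invoked. Since $F^*\pi$ sees the bivector $\pi$ through a symmetrisation over $S_2$, the contraction step contributes the overall factor $2$, while interchanging shuffle and wedge products in degree $|p+q|$ contributes $|p+q|!$ (of which $FF' = n!\,\mathrm{id}$ in \rmref{acquapanna} is the degree-$n$ instance); together these produce the constant $2\,|p+q|!$. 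Equivalently, once $F$ is rescaled by $1/n!$ in degree $n$, the claim is that this normalised quasi-isomorphism transports $\{\cdot,\cdot\}_{F^*\pi}$ to twice the Koszul bracket, which one may alternatively verify by reducing to the case $p,q \le 1$ via the freeness of $\textstyle\bigwedge^\bull_\ahha \! L^*$ with the Koszul bracket as a Gerstenhaber algebra over the Lie--Rinehart algebra $(A,L^*)$ twisted by $\pi$ (see \rmref{labicana} and \cite{Hue:PCAQ}).

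Assembling the argument, one inserts $F'\go$ and $F'\eta$ into \rmref{freni&frizioni1}, applies $F$, and matches the three summands $(-1)^{|p|} b^{F^*\pi}(F'\go \times F'\eta)$, $(-1)^{p} b^{F^*\pi}(F'\go) \times F'\eta$ and $F'\go \times b^{F^*\pi}(F'\eta)$ --- using that $F$ kills boundaries together with the intertwining relations above --- term by term and with signs against the three summands of \rmref{elis}; collecting the combinatorial factors then gives $2\,|p+q|!\,\{\go,\eta\}_\pi = F\big(\{F'\go, F'\eta\}_{F^*\pi}\big)$. The principal obstacle is the second paragraph: pinning down the exact proportionality constants and signs in $F B$ versus $\mathsf{d} F$, in $F\iota_{F^*\pi}$ versus $\mathsf{i}_\pi F$, and in $F$ applied to a shuffle product, all of which have to be carried out on the normalised complex of $J\!L$ with its completed tensor products, and with the continuity of the cochains and the antipode $S$ entering through \rmref{kaesekuchen3} kept in sight. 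Once these constants are settled, the rest is the formal comparison of the Koszul-type generating formulas \rmref{freni&frizioni1} and \rmref{elis}.
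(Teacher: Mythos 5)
Your proposal is correct in its overall architecture but establishes the key identity by a genuinely different mechanism than the paper. Both arguments first check that $F^*\pi$ is a triangular $r$-matrix (the paper does not compute this but cites \cite{KowKra:BVSOEAT} together with Calaque's formality theorem \cite{Cal:FFLA}; note that what is required is $\{F^*\pi,F^*\pi\}=0$ exactly, in the chain-level sense of Definition \ref{shakta}, not merely up to coboundary, so this step is less routine than your ``direct computation'' suggests), both reduce to decomposable forms, and both boil the matter down to showing $F\,\lie_{F^*\pi}F'(\ga^1\wedge\cdots\wedge\ga^p)=2\,|p|!\,\mathsf{L}_\pi(\ga^1\wedge\cdots\wedge\ga^p)$ after converting shuffles to wedges via \rmref{acquapanna}. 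The paper then proves this single intertwining relation by brute force: it substitutes the explicit chain-level formula \rmref{encarnita2} for the Lie derivative and unwinds the jet-space structure maps \rmref{kaesekuchen1}--\rmref{apfelorangeingwersaft}, the factor $2$ coming from the $S_2$-sum hidden in $\pi=\pi^1\wedge\pi^2$ and the factor $|p|!$ from collapsing the symmetrisations. You instead factor the generator through the Cartan--Rinehart homotopy formula $\lie_{F^*\pi}=[B,\iota_{F^*\pi}]$ of Theorem \ref{feinefuellhaltertinte} and intertwine $B$ with $\mathsf{d}$, $\iota_{F^*\pi}$ with $\mathsf{i}_\pi$, and the shuffle with the wedge separately. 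This is legitimate --- the homotopy formula is available since $F^*\pi$ is a cocycle and $A$ is SaYD for the commutative $J\!L$, and $F$ kills boundaries so the whole comparison may be read off on homology --- and it buys conceptual transparency, exhibiting the two generating operators as corresponding to one another under the mixed-complex morphism $F$; the cost is that three separate degree-dependent proportionality constants must be tracked coherently (your $1/n!$ rescaling of $F$ is exactly what makes them compose correctly across the degree shifts of $B$ and $\iota_{F^*\pi}$), and each of the three intertwining relations still requires an explicit verification on the completed, normalised complex of the same nature as, though shorter than, the paper's single long computation.
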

\begin{proof}
The proof
resembles, to some extent, the computations performed in the proof of \cite[Prop.~6.5]{KowKra:BVSOEAT} 
(but note that in [{\em op.~cit.},  Eqs.~(6.18)--(6.20)] 
the factors $(n+1)$, $(n-1)$, resp.\ $n$ should rather read $(n+1)!$, $(n-1)!$, resp.\ $n!$): 
first of all, it follows from [{\em loc.~cit.}] and \cite[Thm.~1.4]{Cal:FFLA} that if $\pi$ is a triangular $r$-matrix for the classical Schouten-Nijenhuis bracket, then $F^*\pi$ is one with respect to the Gerstenhaber bracket constructed in \S\ref{palermo}.

Assume that $\go := \ga^1 \wedge \cdots \wedge \ga^p$ and  $\eta := \gb^1 \wedge \cdots \wedge \gb^q$ for $p,q \geq 1$ (if either $p$ or $q$ is zero, the proof is analogous, but simpler). 
We then have for the second summand of the right hand side in \rmref{mesopotamia} by means of \rmref{freni&frizioni1} and observing that $\lie_{F^*\pi}$ is an operator of degree $-1$:
\begin{footnotesize}
\begin{equation*}
\begin{split}
& F \big(\cL_{F^*\pi} F'(\ga^1 \wedge \cdots \wedge \ga^p) \times F'(\gb^{1} \wedge \cdots \wedge \gb^{q}) \big)\big(X^1 \wedge \cdots \wedge X^{|p+q|}\big) \\
& =\frac{|p+q|!}{|p|! \, q!} F \big(\cL_{F^*\pi} F'(\ga^1 \wedge \cdots \wedge \ga^p) \otimes_\ahha F'(\gb^{1} \wedge \cdots \wedge \gb^{q}) \big)\big(X^1 \wedge \cdots \wedge X^{|p+q|}\big) \\
& =\frac{|p+q|!}{|p|!} \big(F \cL_{F^*\pi} F'(\ga^1 \wedge \cdots \wedge \ga^p) \wedge \gb^{1} \wedge \cdots \wedge \gb^{q}) \big)\big(X^1, \ldots, X^{|p+q|}\big), 
\end{split}
\end{equation*}
\end{footnotesize}
where we used \rmref{acquapanna} in the last line.
Hence, 
if we proved that
$
F \cL_{F^*\pi} F'(\ga^1 \wedge \cdots \wedge \ga^p) = 2 \, |p|! \mathsf{L}_\pi (\ga^1 \wedge \cdots \wedge \ga^p),
$
it is clear that we obtain the second summand of the left hand side in \rmref{mesopotamia}, as given in \rmref{elis}.
Writing  
the Poisson bivector as $\pi:= \pi^1 \wedge \pi^2 \in \bigwedge^2_\ahha \! L$, we therefore compute with \rmref{encarnita2}, \rmref{kaesekuchen1}--\rmref{apfelorangeingwersaft}, \rmref{Sch1}, 
and the commutativity of $A$, along with $St = s$ and $S^2 =\id$:
\begin{footnotesize}
\begin{equation*}
\begin{split}
& F \big(\cL_{F^*\pi} F'(\ga^1 \wedge \cdots \wedge \ga^p)\big)\big(X^1 \wedge \cdots \wedge X^{p-1}\big) \\
& = 
F \Big(
\sum_{\gs \in S_{p}} (-1)^\gs \Big(\sum^{|p|}_{i=1} (-1)^{n-|i|} \big((\ga^{\gs(1)} \pr), \ldots,  (\ga^{\gs(i-1)} \pr), \\
& \qquad 
F^*\pi\big((\ga^{\gs(i)} \pr)_{(1)}, (\ga^{\gs(i+1)} \pr)_{(1)}\big) \lact (\ga^{\gs(i)} \pr)_{(2)} (\ga^{\gs(i+1)} \pr)_{(2)}, (\ga^{\gs(i+2)} \pr), \ldots, 
(\ga^{\gs(p)} \pr)\big) \\
& \quad - \big((\ga^{\gs(1)} \pr)_+, \ldots,  (\ga^{\gs(p-2)} \pr)_+, \\
& \qqquad\quad 
F^*\pi\big((\ga^{\gs(p)} \pr)_+, (\ga^{\gs(p)} \pr)_{-} \cdots (\ga^{\gs(1)} \pr)_{-}\big) \blact (\ga^{\gs(p-1)} \pr)_+\big) \\
&\quad +(-1)^{|p|} \big((\ga^{\gs(2)} \pr)_+, \ldots,  (\ga^{\gs(p-1)} \pr)_+, \\
& \qqquad \quad 
F^*\pi\big((\ga^{\gs(p)} \pr)_{-} \cdots (\ga^{\gs(1)} \pr)_{-}, (\ga^{\gs(p)} \pr)_+ \big) \blact (\ga^{\gs(p)} \pr)_+\big) 
\Big)
\Big)
\Big(X^1 \wedge \cdots \wedge X^{p-1}\Big) 
\\
& = 
2 \, |p|! \, \sum_{\gs \in S_{p}} (-1)^\gs \sum_{\tau \in S_2} (-1)^\tau \Big[\sum^{|p|}_{i=1} (-1)^{n-|i|} \ga^{\gs(1)}(X^{1})  \cdots  \ga^{\gs(i-1)}(X^{i-1}) \\
& \qqquad \qquad
\gve\Big(X^{i}_+ \gve\big(\pi^{\tau(1)}_+ (\ga^{\gs(i)} \pr)_{(1)}(\pi^{\tau(1)}_-)\big) \, \gve\big(\pi^{\tau(2)}_+ (\ga^{\gs(i+1)} \pr)_{(1)}(\pi^{\tau(2)}_-)\big) \,
\\
& \qqquad \qqquad
(\ga^{\gs(i)} \pr)_{(2)}(X^{i}_{-(1)}) \, (\ga^{\gs(i+1)} \pr)_{(2)} (X^{i}_{-(2)})\Big) \,
\ga^{\gs(i+2)}(X^{i+1}) \cdots \ga^{\gs(p)}(X^{p-1}) \\
& \quad - 
\gve\big(X^{1}_+ (\ga^{\gs(1)} \pr)_{(1)}(X^{1}_-)\big)  \cdots \gve\big(X^{p-1}_+ (\ga^{\gs(p-1)} \pr)_{(1)}(X^{p-1}_-)\big)  
\gve\big(\pi^{\tau(1)}_+(\ga^{\gs(p)} \pr)_{(1)}(\pi^{\tau(1)}_-)\big) 
\\
& \qqquad\quad 
\big((\ga^{\gs(p)} \pr)_{(2)} \cdots (\ga^{\gs(1)} \pr)_{(2)}\big)(\pi^{\tau(2)}) \,  
 \\
&\quad +(-1)^{|p|} 
\gve\big(X^{1}_+ (\ga^{\gs(2)} \pr)_{(1)}(X^{1}_-)\big)  \cdots \gve\big(X^{p-1}_+ (\ga^{\gs(p)} \pr)_{(1)}(X^{p-1}_-)\big)  \\
& \qqquad \quad 
\big((\ga^{\gs(p)} \pr)_{(2)} \cdots (\ga^{\gs(1)} \pr)_{(2)}\big)(\pi^{\tau(2)}) \, 
\gve\big(\pi^{\tau(1)}_+(\ga^{\gs(1)} \pr)_{(1)}(\pi^{\tau(1)}_-)\big) 
\Big] 
\end{split}
\end{equation*}
\begin{equation*}
\begin{split}
& = 
2 \, |p|! \, \sum_{\gs \in S_{p}} (-1)^\gs \sum_{\tau \in S_2} (-1)^\tau \Big[\sum^{|p|}_{i=1} (-1)^{n-|i|} \ga^{\gs(1)}(X^{1})  \cdots  \ga^{\gs(i-1)}(X^{i-1}) \\
& \qqquad \qquad
\gve\Big(X^{i}_+ \gve\big(\pi^{\tau(1)}_+ (\ga^{\gs(i)} \pr)(\pi^{\tau(1)}_- X^{i}_{-(1)})\big) \, 
\gve\big(\pi^{\tau(2)}_+ (\ga^{\gs(i+1)} \pr)(\pi^{\tau(2)}_- X^{i}_{-(2)})\big)\Big) \,
\\
& \qqquad \qqquad \qqquad
\ga^{\gs(i+2)}(X^{i+1}) \cdots \ga^{\gs(p)}(X^{p-1}) \\
& \quad - 
\gve\big(X^{1}_+ (\ga^{\gs(1)} \pr)(X^{1}_- \pi^{\tau(2)}_{(1)})\big)  \cdots \gve\big(X^{p-1}_+ (\ga^{\gs(p-1)} \pr) 
(X^{p-1}_- \pi^{\tau(2)}_{(p-1)} )\big)  \\
& \qqquad\quad 
\gve\big(\pi^{\tau(1)}_+(\ga^{\gs(p)} \pr)(\pi^{\tau(1)}_- \pi^{\tau(2)}_{(p)})\big) 
 \\
&\quad +(-1)^{|p|} 
\gve\big(X^{1}_+ (\ga^{\gs(2)} \pr)(X^{1}_- \pi^{\tau(2)}_{(2)})\big)  \cdots \gve\big(X^{p-1}_+ (\ga^{\gs(p)} \pr)_{(1)}(X^{p-1}_- \pi^{\tau(2)}_{(p)})\big)  \\
& \qqquad \quad 
 \gve\big(\pi^{\tau(1)}_+(\ga^{\gs(1)} \pr)(\pi^{\tau(1)}_- \pi^{\tau(2)}_{(1)})\big) \,
\Big] \\
& = 
2 \, |p|! \, \big(\mathsf{L}_{\pi^1 \wedge \pi^2}(\ga^1 \wedge \cdots \wedge \ga^p)\big)\big(X^1, \ldots, X^{p-1}\big),
\end{split}
\end{equation*}
\end{footnotesize}
where the last step is a (long but) 
straightforward verification using \rmref{rashomon1} and \rmref{rashomon2}, the fact that vector fields act by $X(a):= \gve(Xa)$ as derivations on $A$, and $\pr(XY - YX) = \pr([X,Y]) = [X,Y]$, 
along with $\pr(1_{\scriptscriptstyle_{V \!L}}) = 0$.
The respective computations for the remaining summands in \rmref{mesopotamia} are similar and therefore skipped.
\end{proof}

\subsection{Lie bialgebroids}
\label{salice}
In this section, we deal with the situation how the general construction of Gerstenhaber brackets
on $\Ext^\bull_U(A,A)$ and $\Tor^U_\bull(A,A)$ is related to the notion of Lie bialgebroids, or Lie-Rinehart bialgebras in its algebraic formulation. 

Recall from \cite{MacXu:LBAPG, Kos:EGAALBA, Hue:DBVAAFTLRA} that a {\em Lie-Rinehart bialgebra} (or {\em Lie bialgebroid}) 
is a pair $(L,K)$ of finitely generated Lie-Rinehart algebras $(A,L)$ and $(A,K)$ over the same base algebra $A$, where $K \simeq L^* := \Hom_A(L,A)$, such that one (hence both) of the following equivalent conditions is true:
\begin{enumerate}
\item
$(\bigwedge^\bull_\ahha \! L, [.,.], \wedge, \mathsf{d}_*)$ is a strong differential Gerstenhaber algebra;
\item
$(\bigwedge^\bull_\ahha \! K, [.,.]_*, \wedge, \mathsf{d})$ is one.
\end{enumerate} 
Here, $[.,.]$ is the Schouten-Nijenhuis bracket \rmref{labicana0} on $\bigwedge^\bull_\ahha \! L$ and $\mathsf{d}$ is the generalised de Rham differential on $\bigwedge^\bull_\ahha \! L^* \simeq \bigwedge^\bull_\ahha \! K$, whereas $[.,.]_*$ and $\mathsf{d}_*$ are the respective structures arising from the Lie-Rinehart algebra structure of $(A,K)$; see [{\em op.~cit.}] for further equivalent formulations. 
As proven in \cite{Kos:EGAALBA, Xu:GAABVAIPG}, when the Lie-Rinehart structure on $K \simeq L^*$ arises from a triangular $r$-matrix $\pi \in \bigwedge^2_\ahha \! L$ with Gerstenhaber bracket $[.,.]_* := [.,.]_\pi$ given as in \rmref{elis}, the Batalin-Vilkovisky algebra $(\bigwedge^\bull_\ahha \! L^*, [.,.]_\pi)$ is strong differential with respect to the de Rham differential $\mathsf{d}$ and hence $(L, L^*)$ is a Lie-Rinehart bialgebra.
Examples include the case of classical Lie bialgebras as introduced by Drinfel'd \cite{Dri:HSOLGLBATGMOCYBE}, hence the terminology.

In the case of a left bialgebroid or left Hopf algebroid $U$, 
the situation appears, of course, to be more general. 
In view of Example \ref{cambridge} and \S\ref{pirano} and what was said above, the right question to ask 
is when $H^\bull(U,A)$ and $H_\bull(U,A)$ are (strong) differential Gerstenhaber algebras, but in contrast to the example coming from Lie-Rinehart bialgebras as above, where  $(\bigwedge^\bull_\ahha \! L, [.,.], \wedge, \mathsf{d}_*)$ and $(\bigwedge^\bull_\ahha \! K, [.,.]_*, \wedge, \mathsf{d})$ are simultaneously strong differential Gerstenhaber algebras, these two structures are not necessarily related. In any case, if a triangular $r$-matrix is given, one proves the following:

\begin{prop}
Let $U^\gt$ be a Poisson bialgebroid with triangular $r$-matrix $\gt$. 
Then $H^\bull(U,A)$ (resp.~ $\Ext^\bull_{U}(A,A)$ when $U_\ract$ is $\Aop$-projective) forms a strong differential Gerstenhaber algebra with respect to the differential $\gb^\gt$. 
In case $U^\gt$ is commutative and carries additionally the structure of a left Hopf algebroid, the Batalin-Vilkovisky algebra $\big(H_\bull(U,A), b^\gt\big)$  (resp.~$\big(\Tor_\bull^{U}(A,A), b^\gt\big)$ when $\due U \blact {}$ 
is $A$-projective) is a strong differential Gerstenhaber algebra as well with respect to the cyclic differential $B$.
\end{prop}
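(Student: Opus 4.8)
The plan is to obtain both statements, with essentially no new computations, from the Gerstenhaber algebra structures already in place together with the calculus identities of \S\ref{nmd}, \S\ref{ronciglione} and \S\ref{fave}. For the first statement I would invoke the standard observation that for any Gerstenhaber algebra $(V,\smallsmile,\{.,.\})$ and any $\nu\in V^2$ with $\{\nu,\nu\}=0$, the degree $+1$ operator $\{\nu,\cdot\}$ turns $V$ into a strong differential Gerstenhaber algebra in the sense of Definition~\ref{caffetommaseo}: it has degree $+1$ by the grading of the bracket, it squares to zero by the graded Jacobi identity together with $\{\nu,\nu\}=0$, the rule $\{\nu,\ga\smallsmile\gb\}=\{\nu,\ga\}\smallsmile\gb+(-1)^p\ga\smallsmile\{\nu,\gb\}$ for $\ga\in V^p$ is exactly the graded Leibniz rule of Definition~\ref{golfoaranci1}(i) with $\gamma:=\nu$, and $\{\nu,\{\ga,\gb\}\}=\{\{\nu,\ga\},\gb\}+(-1)^{|p|}\{\ga,\{\nu,\gb\}\}$ is exactly the graded Jacobi identity with $\nu$ in the first slot. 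Applying this with $V:=H^\bull(U,A)$, a Gerstenhaber algebra by Corollary~\ref{fundbuero} (with $N:=A$), and $\nu:=\gt$ --- a $2$-cocycle with $\{\gt,\gt\}=0$ by \rmref{impregnante2} --- yields the claim, once one notes that $\gb^\gt=\{\gt,\cdot\}$ descends to $H^\bull(U,A)$: it satisfies $\gb^\gt\gb^\gt=0$ as recorded in \S\ref{fave}, and anticommutes with $\gd$ because $\gd\gt=\{\mu,\gt\}=0$ forces $\gd\gb^\gt+\gb^\gt\gd=\{\{\mu,\gt\},\cdot\}=0$. The $\Ext$-version is the transport of this structure along $H^\bull(U,A)\simeq\Ext^\bull_U(A,A)$, valid when $U_\ract$ is $\Aop$-projective (see \S\ref{palermo}).

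For the second statement, recall from Theorem~\ref{barberini} that $H_\bull(U,A)$ with the shuffle product $\times$ of \rmref{shuffle1} is a Batalin-Vilkovisky algebra whose bracket $\{.,.\}_\gt$ is generated by $b^\gt=-\lie_\gt$, the operator $\lie_\gt$ being well defined on $H_\bull(U,A)$ because $\gt$ is a cocycle (Theorem~\ref{feinefuellhaltertinte}). The cyclic differential $B$ of \rmref{extra} is of degree $+1$, satisfies $B^2=0$, and descends to $H_\bull(U,A)$ since $(\bar C_\bull(U,A),b,B)$ is a mixed complex (the case $\gt:=\mu$ of Proposition~\ref{c'est-a-dire} via \rmref{siena}); moreover $Bb^\gt+b^\gt B=0$ on the normalised complex by \rmref{trieste} (equivalently \rmref{alles2}). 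Granting for the moment that $B$ is a graded derivation of $\times$ on $H_\bull(U,A)$, the one remaining requirement of Definition~\ref{caffetommaseo} --- that $B$ be a graded derivation of $\{.,.\}_\gt$ --- is then purely formal: expanding $B\{x,y\}_\gt$ by the defining formula of Theorem~\ref{barberini}, applying the Leibniz rule for $B$ with respect to $\times$, and commuting $B$ past $b^\gt$ via $Bb^\gt=-b^\gt B$, the three terms involving $b^\gt$ (which cannot be treated individually, $b^\gt$ being only a second-order operator) recombine and one is left exactly with $\{Bx,y\}_\gt+(-1)^{|p|}\{x,By\}_\gt$. This makes $\big(H_\bull(U,A),\times,\{.,.\}_\gt,B\big)$ a strong differential Gerstenhaber algebra, and the $\Tor$-version follows from $H_\bull(U,A)\simeq\Tor^U_\bull(A,A)$ when $\due U\blact{}$ is $A$-projective.

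The main obstacle is thus the single genuinely computational point left open above: that $B$ is a graded derivation of the shuffle product, i.e.\ that $B(x\times y)=Bx\times y+(-1)^p x\times By$ for $x\in C_p(U,A)$, at least after descent to $H_\bull(U,A)$. This is the commutative-bialgebroid analogue of the classical fact that Connes' operator $B$ is a derivation of the shuffle product on the normalised Hochschild complex of a commutative algebra --- equivalently, that under the Hochschild-Kostant-Rosenberg map $B$ becomes the de Rham differential, a derivation of the wedge product (see, e.g., \cite{Lod:CH}). I would prove it by a combinatorial shuffle computation on $\bar C_\bull(U,A)$ entirely parallel to the proof of Lemma~\ref{vincoli1} for the Hochschild boundary, using the explicit form $B=\sss_{-1}\cN$ on the normalised complex and the commutativity of $U$; once this Leibniz rule is in hand, everything else reduces to identities already available in the text.
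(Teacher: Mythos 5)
Your proposal is correct and follows essentially the same route as the paper: the first statement is the formal consequence of the graded Leibniz and Jacobi identities applied to $\{\gt,\cdot\}$, and the second reduces to the fact that $B$ is a graded derivation of the shuffle product on $H_\bull(U,A)$ (which the paper likewise only sketches, citing the classical Hochschild argument in \cite{Lod:CH}), followed by the same formal recombination using $Bb^\gt+b^\gt B=0$. Your additional remark that $\gb^\gt$ anticommutes with $\gd$ because $\gd\gt=0$ is a worthwhile check that the paper leaves implicit.
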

\begin{proof}
The first statement, {\em i.e.}, the fact that $\gb^\gt = \{\gt, .\}$ fulfils the identities in Definition \ref{caffetommaseo} with respect to the Gerstenhaber bracket on $H^\bull(U,A)$ induced by \rmref{maxdudler} follows directly from the Leibniz rule and the graded Jacobi identity of the cup product and the bracket itself.

The second statement follows from the fact that the operator $B: H_\bull(U,A) \to H_{\bull+1}(U,A)$ induced 
 by the cyclic differential (and denoted by the same symbol) from \rmref{extra} 
is a graded derivation of the shuffle product, {\em i.e.}, one has 
$$
B(x \times y) = Bx \times y + (-1)^p x \times By, \qquad x \in H_p(U,A), \ y \in H_\bull(U,A),
$$ 
as can be shown, for example, along the lines for the Hochschild case in \cite[Cor.~4.3.4]{Lod:CH}. One then directly verifies the two identities in Definition \ref{caffetommaseo} for the Gerstenhaber bracket \rmref{freni&frizioni2}, 
which is straightforward using \rmref{trieste} resp.~\rmref{alles2}; for convenience of the reader, we nevertheless show the second one: 
\begin{equation*}
\begin{split}
B\{x,y\}_\gt &= (-1)^{|p|} B b^\gt( x \times y) + (-1)^p B b^\gt x \times y \\
& \qquad - b^\gt x \times B y + Bx \times b^\gt y 
+ (-1)^p x \times B b^\gt y \\
& = (-1)^{p} b^\gt (B x \times y) + (-1)^{|p|} b^\gt B x \times y + Bx \times b^\gt y \\
& \qquad + b^\gt(x \times By) - b^\gt x \times B y + (-1)^{|p|} x \times b^\gt B y \\
& = \{Bx, y\}_\gt + (-1)^{|p|} \{x, By\}
\end{split}
\end{equation*}
 for $x \in H_p(U,A)$ and $y \in H_\bull(U,A)$, and this concludes the proof.
\end{proof}

\providecommand{\bysame}{\leavevmode\hbox to3em{\hrulefill}\thinspace}
\providecommand{\MR}{\relax\ifhmode\unskip\space\fi MR }
\providecommand{\MRhref}[2]{%
  \href{http://www.ams.org/mathscinet-getitem?mr=#1}{#2}
}
\providecommand{\href}[2]{#2}

\end{document}